\title{The heart of a combinatorial model category}
\author{Zhen~Lin Low}
\date{6 January 2015}
\begin{document}

\maketitle
\footpar{Department of Pure Mathematics and Mathematical Statistics, University of Cambridge, Cambridge, UK. \textsc{E-mail address}: \texttt{Z.L.Low@dpmms.cam.ac.uk}}

\begin{abstract}
We show that every small model category that satisfies certain size conditions can be completed to yield a combinatorial model category, and conversely, every combinatorial model category arises in this way.
We will also see that these constructions preserve right properness and compatibility with simplicial enrichment.
Along the way, we establish some technical results on the index of accessibility of various constructions on accessible categories, which may be of independent interest.
\end{abstract}

\section*{Introduction}

Category-theoretic homotopy theory has seen a boom in recent decades.
One development was the introduction of the notion of `combinatorial model categories' by \citet{Smith:1998}.
These correspond to what \citet{HTT} calls `presentable $\infty$-categories' and are therefore a homotopy-theoretic generalisation of the locally presentable categories of \citet{Gabriel-Ulmer:1971}.
The classification of locally $\kappa$-presentable categories says that each one is equivalent to the free $\kappa$-ind-completion of a $\kappa$-cocomplete small category, and Lurie proved the analogous proposition for presentable $\infty$-categories, so it should at least seem plausible that every combinatorial model category is generated by a small model category in an appropriate sense.

Indeed, the work of \citet{Beke:2000} suggests that more should be true.
As stated in the abstract of \opcit,
\begin{quote}
If a Quillen model category can be specified using a certain logical syntax (intuitively, `is algebraic/combinatorial enough'), so that it can be defined in any category of sheaves, then the satisfaction of Quillen's axioms over any site is a purely formal consequence of their being satisfied over the category of sets.
\end{quote}
In the same vein, we can show that the answer to the question of whether a set of generating cofibrations and trivial cofibrations in a locally presentable category really do generate a combinatorial model category depends only on an essentially small full subcategory of small objects, which we may think of as an analogue of the Löwenheim--Skolem theorem in logic.
More precisely:

\begin{thm*}
Let $\mathcal{M}$ be a locally presentable category and let $\mathcal{I}$ and $\mathcal{I}'$ be subsets of $\mor \mathcal{M}$.
There is a regular cardinal $\lambda$ such that the weak factorisation systems cofibrantly generated by $\mathcal{I}$ and $\mathcal{I}'$ underlie a model structure on $\mathcal{M}$ if and only if their restrictions to $\Kompakt[\lambda]{\mathcal{M}}$ underlie a model structure on $\Kompakt[\lambda]{\mathcal{M}}$, where $\Kompakt[\lambda]{\mathcal{M}}$ is the full subcategory of $\lambda$-presentable objects in $\mathcal{M}$.
\end{thm*}

The main difficulty is in choosing a definition of `weak equivalence in $\mathcal{M}$' for which we can verify the model category axioms.
As it turns out, what works is to define `weak equivalence' to be a morphism such that the right half of its (trivial cofibration, fibration)-factorisation is a trivial fibration.
This allows us to apply the theory of accessible categories: the key result needed is a special case of the well-known theorem of \citet[\Sect 5.1]{Makkai-Pare:1989} concerning weighted 2-limits of diagrams of accessible categories.
Moreover, by using good estimates for the index of accessibility of the categories obtained in this way, we can establish a stronger result:

\begin{thm*}
Let $\mathcal{M}$ be a locally presentable category and let $\mathcal{I}$ and $\mathcal{I}'$ be subsets of $\mor \mathcal{M}$.
Suppose $\kappa$ and $\lambda$ are regular cardinals that satisfy the following hypotheses:
\begin{itemize}
\item $\mathcal{M}$ is a locally $\kappa$-presentable category, and $\kappa$ is sharply less than $\lambda$.

\item $\Kompakt[\lambda]{\mathcal{M}}$ is closed under finite limits in $\mathcal{M}$.

\item There are $< \lambda$ morphisms between any two $\kappa$-presentable objects in $\mathcal{M}$.

\item $\mathcal{I}$ and $\mathcal{I}'$ are $\lambda$-small sets of morphisms between $\kappa$-presentable objects.
\end{itemize}
Then the weak factorisation systems cofibrantly generated by $\mathcal{I}$ and $\mathcal{I}'$ underlie a model structure on $\mathcal{M}$ if and only if their restrictions to $\Kompakt[\lambda]{\mathcal{M}}$ underlie a model structure on $\Kompakt[\lambda]{\mathcal{M}}$.
\end{thm*}

This is essentially what \autoref{thm:completeness.for.strongly.accessible.model.categories} states.
Moreover, given $\mathcal{M}$, $\mathcal{I}$, and $\mathcal{I}'$, we can always find regular cardinals $\kappa$ and $\lambda$ satisfying the hypotheses above.
Thus, if $\mathcal{M}$ is a combinatorial model category, there is a regular cardinal $\lambda$ such that $\Kompakt[\lambda]{\mathcal{M}}$ not only inherits a model structure from $\mathcal{M}$ but also determines $\mathcal{M}$ as a combinatorial model category \dash the subcategory $\Kompakt[\lambda]{\mathcal{M}}$ might be called the `heart' of $\mathcal{M}$.
(For details, see \autoref{prop:heart.of.a.combinatorial.model.category}.)
When we have explicit sets of generating cofibrations and generating trivial cofibrations, we can also give explicit $\kappa$ and $\lambda$ for which this happens: 
\begin{itemize}
\item If $\mathcal{M}$ is the category of simplicial sets with the Kan--Quillen model structure, then we can take $\kappa = \aleph_0$ and $\lambda = \aleph_1$.

\item If $\mathcal{M}$ is the category of unbounded chain complexes of left $R$-modules, then we can take $\kappa = \aleph_0$ and $\lambda$ to be the smallest uncountable regular cardinal such that $R$ is $\lambda$-small (as a set).

\item If $\mathcal{M}$ is the category of symmetric spectra of \citet{HSS:2000} with the stable model structure, then we can take $\kappa = \aleph_1$ and $\lambda$ to be the cardinal successor of $2^{2^{\aleph_0}}$.
\end{itemize}
In the converse direction, we obtain a sufficient condition for an essentially small model category $\mathcal{K}$ to arise in this fashion: see \autoref{thm:model.structure.for.ind-objects}.

The techniques used in the proof of the main theorem are easily generalised, allowing us to make sense of a remark of \citet{Dugger:2001c}:
\begin{quote}
[\ldots] for a combinatorial model category the interesting part of the homotopy theory is all concentrated within some small subcategory \dash beyond sufficiently large cardinals the homotopy theory is somehow ``formal''.
\end{quote}
For illustration, we will see how to validate the above heuristic in the cases of right properness and axiom {\LiningNumbers SM7}.

%
%
%
\needspace{2.5\baselineskip}
The structure of this paper is as follows:
\begin{itemize}
\item \Sect \ref{sect:presentable.objects} contains some technical results on presentable objects and filtered colimits thereof.
In particular, the definition of `sharply less than' is recalled, in preparation for the statement of the main result.

\item \Sect \ref{sect:accessible.constructions} is an analysis of some special cases of the theorem of Makkai and Paré on weighted 2-limits of accessible categories (see Theorem 5.1.6 in \citep{Makkai-Pare:1989}, or \citep[\Sect 2.H]{LPAC}), with a special emphasis on the index of accessibility of the categories and functors involved.

The results appearing in this section are related to those appearing in a preprint of \citet{Ulmer:1977} and probably well known to experts; nonetheless, for the sake of completeness, full proofs are given.

\item \Sect \ref{sect:accessibly.generated.categories} introduces the notion of accessibly generated category, which is a size-restricted analogue of the notion of accessible category.

\item \Sect \ref{sect:accessible.factorisation.systems} collects together some results about cofibrantly generated weak factorisation systems on locally presentable categories.

\item \Sect \ref{sect:strongly.combinatorial.model.categories} establishes the main result: that every combinatorial model category is generated by a small model category, and conversely, that small model categories satisfying certain size conditions generate combinatorial model categories.
\end{itemize}
This paper also includes some appendices covering background material:
\begin{itemize}
\item \Sect \ref{sect:accessibility} is an overview of the basic theory of accessible categories.
General references for this topic include Chapter 2 of \citep{LPAC}, and Chapter 5 of \citep{Borceux:1994b}.

\item \Sect \ref{sect:factorisation} sets up our notation and terminology regarding factorisation systems.

\item \Sect \ref{sect:model.structures} contains the definition of various kinds of model categories.
\end{itemize}

\subsection*{Acknowledgements}

The author is indebted to Jiří Rosický for bringing \autoref{thm:accessible.iso-comma.categories} to his attention: without this fact, it would have been impossible to control the index of accessibility of all the various subcategories considered in the proof of the main result.
Thanks are also due to David White for many helpful comments, and to Hans-E.\@ Porst \parencite*{Porst:2014} for unearthing \citep{Ulmer:1977} and drawing attention to the results contained therein.
Finally, the author is grateful to an anonymous referee for suggestions leading to a more streamlined exposition.

The author gratefully acknowledges financial support from the Cambridge Commonwealth, European and International Trust and the Department of Pure Mathematics and Mathematical Statistics.
\section{Presentable objects}
\label{sect:presentable.objects}

\begin{numpar}
Throughout this section, $\kappa$ is an arbitrary regular cardinal.
\end{numpar}

\begin{dfn}
Let $\mathcal{C}$ be a locally small category.
\begin{itemize}
\item Let $\lambda$ be a regular cardinal.
A \strong{$\tuple{\kappa, \lambda}$-presentable object} in $\mathcal{C}$ is an object $A$ in $\mathcal{C}$ such that the representable functor $\Hom[\mathcal{C}]{A}{\blank} : \mathcal{C} \to \cat{\Set}$ preserves colimits of all $\lambda$-small $\kappa$-filtered diagrams.

We write $\Kompakt[\kappa][\lambda]{\mathcal{C}}$ for the full subcategory of $\mathcal{C}$ spanned by the $\tuple{\kappa, \lambda}$-presentable objects.

\item A \strong{$\kappa$-presentable object} in $\mathcal{C}$ is an object in $\mathcal{C}$ that is $\tuple{\kappa, \lambda}$-presentable for all regular cardinals $\lambda$.

We write $\Kompakt[\kappa]{\mathcal{C}}$ for the full subcategory of $\mathcal{C}$ spanned by the $\kappa$-presentable objects.
\end{itemize}
\end{dfn}

\begin{example}
A set is $\kappa$-small if and only if it is a $\kappa$-presentable object in $\cat{\Set}$.
\end{example}

\begin{remark}
Although every $\aleph_0$-small (\ie finite) category is $\aleph_0$-presentable as an object in $\cat{\Cat}$, not every $\aleph_0$-presentable object in $\cat{\Cat}$ is $\aleph_0$-small.
The difference disappears for uncountable regular cardinals.
\end{remark}

\begin{lem}
\label{lem:small.objects}
Let $\mathcal{C}$ be a locally small category and let $B : \mathcal{D} \to \mathcal{C}$ be a $\kappa$-small diagram.
If each $B d$ is a $\tuple{\kappa, \lambda}$-presentable object in $\mathcal{C}$, then the colimit $\indlim_\mathcal{D} B$, if it exists, is also a $\tuple{\kappa, \lambda}$-presentable object in $\mathcal{C}$.
\end{lem}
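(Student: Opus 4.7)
The plan is to reduce the preservation statement to the well-known interchange of $\kappa$-small limits with $\kappa$-filtered colimits in $\cat{\Set}$. Fix a $\lambda$-small $\kappa$-filtered diagram $X : \mathcal{J} \to \mathcal{C}$ whose colimit exists; we need to show that the canonical comparison
\[
\indlim_{j \in \mathcal{J}} \Hom[\mathcal{C}]{\indlim_\mathcal{D} B}{X j} \longrightarrow \Hom[\mathcal{C}]{\indlim_\mathcal{D} B}{\indlim_\mathcal{J} X}
\]
is a bijection.

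First I would rewrite both sides using the fact that representable functors send colimits in $\mathcal{C}$ to limits in $\cat{\Set}$: the right-hand side equals $\prolim_{d \in \op{\mathcal{D}}} \Hom[\mathcal{C}]{B d}{\indlim_\mathcal{J} X}$, and the hypothesis that each $B d$ is $\tuple{\kappa, \lambda}$-presentable identifies this with $\prolim_{d \in \op{\mathcal{D}}} \indlim_{j \in \mathcal{J}} \Hom[\mathcal{C}]{B d}{X j}$. The left-hand side, expanded the same way, is $\indlim_{j \in \mathcal{J}} \prolim_{d \in \op{\mathcal{D}}} \Hom[\mathcal{C}]{B d}{X j}$. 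So the claim reduces to showing that the limit over $\op{\mathcal{D}}$ and the colimit over $\mathcal{J}$ commute in $\cat{\Set}$.

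This is precisely the classical interchange: in $\cat{\Set}$, $\kappa$-filtered colimits commute with $\kappa$-small limits. Since $\mathcal{D}$ (and hence $\op{\mathcal{D}}$) is $\kappa$-small and $\mathcal{J}$ is $\kappa$-filtered, the commutation applies and the comparison map is a bijection. Note that the $\lambda$-smallness of $\mathcal{J}$ is not needed for the interchange itself; it enters only through the hypothesis that it is a permissible test diagram for $\tuple{\kappa,\lambda}$-presentability of the $B d$.

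There is no real obstacle here beyond bookkeeping: the only thing to be careful about is that ``$\kappa$-small'' for $\mathcal{D}$ means $\kappa$-small in the sense that lets one form a $\kappa$-small limit in $\cat{\Set}$ (\ie both the object-set and the morphism-set are $\kappa$-small), which is the convention in force throughout this section. Once that is noted, the proof is a three-line diagram chase assembling the four displayed isomorphisms.
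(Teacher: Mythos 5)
Your proof is correct and is exactly the argument the paper intends: the paper's one-line proof invokes the fact that $\prolim_{\op{\mathcal{D}}} : \Func{\op{\mathcal{D}}}{\cat{\Set}} \to \cat{\Set}$ preserves colimits of small $\kappa$-filtered diagrams, which is precisely the interchange of $\kappa$-small limits with $\kappa$-filtered colimits in $\cat{\Set}$ that you reduce to after rewriting both sides via representability. Your observation that the $\lambda$-smallness of $\mathcal{J}$ matters only for it to be an admissible test diagram is also accurate.
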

\begin{proof}
This follows from the fact that $\prolim_{\op{\mathcal{D}}} \Func{\op{\mathcal{D}}}{\cat{\Set}} \to \cat{\Set}$ preserves colimits of small $\kappa$-filtered diagrams.
\end{proof}

\begin{lem}
\label{lem:back-and-forth.approximation.of.isomorphisms}
Assume the following hypotheses:
\begin{itemize}
\item $\mathcal{E}$ is a locally small category with colimits of small $\kappa$-filtered diagrams.

\item $X, Y : \mathcal{I} \to \mathcal{E}$ are two small $\lambda$-filtered diagrams whose vertices are $\lambda$-presentable objects in $\mathcal{E}$, where $\kappa \le \lambda$.

\item $\phi : X \hoto Y$ is a natural transformation.
\end{itemize}
Let $i_0$ be an object in $\mathcal{I}$.
If $\indlim_\mathcal{I} \phi : \indlim_\mathcal{I} X \to \indlim_\mathcal{I} Y$ is an isomorphism in $\mathcal{E}$, then there is a chain $I : \kappa \to \mathcal{I}$ such that $I \argp{0} = i_0$ and $\indlim_{\gamma < \kappa} \phi_{I \argp{\gamma}} : \indlim_{\gamma < \kappa} X I \argp{\gamma} \to \indlim_{\gamma < \kappa} Y I \argp{\gamma}$ is an isomorphism in $\mathcal{E}$.
\end{lem}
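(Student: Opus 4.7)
The plan is a classical back-and-forth argument: the inverse $\psi_\infty$ of $\phi_\infty := \indlim_\mathcal{I} \phi$ will be realized locally, using $\lambda$-presentability of the vertices of $X$ and $Y$ together with $\lambda$-filteredness of $\mathcal{I}$. By transfinite recursion I will construct, alongside the chain $I : \kappa \to \mathcal{I}$, auxiliary morphisms $\psi_\gamma : Y I\argp{\gamma} \to X I\argp{\gamma+1}$ such that, writing $\alpha_\gamma : I\argp{\gamma} \to I\argp{\gamma+1}$ for the image of the ordinal successor, the back-and-forth relations
\[
\psi_\gamma \circ \phi_{I\argp{\gamma}} = X \alpha_\gamma, \qquad \phi_{I\argp{\gamma+1}} \circ \psi_\gamma = Y \alpha_\gamma
\]
hold in $\mathcal{E}$. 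The base case is $I\argp{0} := i_0$.

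For the successor step, given $I\argp{\gamma}$: since $Y I\argp{\gamma}$ is $\lambda$-presentable, the composite of $\psi_\infty$ with the coprojection $Y I\argp{\gamma} \to \indlim_\mathcal{I} Y$ factors through some coprojection $X(j) \to \indlim_\mathcal{I} X$ via a morphism $\tilde\psi : Y I\argp{\gamma} \to X(j)$, and after enlarging $j$ we may suppose there is a connecting map $\alpha : I\argp{\gamma} \to j$ in $\mathcal{I}$. The identity $\psi_\infty \phi_\infty = \id$ forces $\tilde\psi \circ \phi_{I\argp{\gamma}}$ and $X\alpha$ to become equal after the coprojection into $\indlim_\mathcal{I} X$; by $\lambda$-presentability of $X I\argp{\gamma}$ they then coincide after some $j \to j_1$ in $\mathcal{I}$. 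Symmetrically, $\phi_\infty \psi_\infty = \id$ together with naturality of $\phi$ and $\lambda$-presentability of $Y I\argp{\gamma}$ yields $j \to j_2$ after which $\phi_j \circ \tilde\psi$ and $Y\alpha$ agree. Take $I\argp{\gamma+1}$ to be any upper bound of $j_1, j_2$ in $\mathcal{I}$ and set $\psi_\gamma := X(j \to I\argp{\gamma+1}) \circ \tilde\psi$; both relations then follow by naturality of $\phi$. At a limit ordinal $\gamma < \kappa$, the partial chain is $\kappa$-small, so the $\kappa$-filteredness of $\mathcal{I}$ (which follows from $\lambda$-filteredness and $\kappa \leq \lambda$) supplies an upper bound for $I\argp{\gamma}$.

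The chain $\kappa$ is itself $\kappa$-filtered by regularity, so the colimits $\indlim_{\gamma < \kappa} X I\argp{\gamma}$ and $\indlim_{\gamma < \kappa} Y I\argp{\gamma}$ exist in $\mathcal{E}$. Since the shift-by-one endomap on $\kappa$ is final, the family $\seq{\psi_\gamma}{\gamma < \kappa}$ assembles into a morphism $\indlim_{\gamma < \kappa} Y I\argp{\gamma} \to \indlim_{\gamma < \kappa} X I\argp{\gamma}$. The two back-and-forth relations say that, at each stage, the composites of $\psi_\gamma$ with $\phi_{I\argp{\gamma}}$ and $\phi_{I\argp{\gamma+1}}$ recover the structure maps of $X \circ I$ and $Y \circ I$; these become identities after composition with the coprojections, so the induced morphism is a two-sided inverse of $\indlim_{\gamma < \kappa} \phi_{I\argp{\gamma}}$. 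The only substantive difficulty is the successor step, where the two global identities must be reflected back to a bounded stage in $\mathcal{I}$; this needs $\lambda$-presentability on both sides and $\lambda$-filteredness of $\mathcal{I}$ to form a common upper bound. Everything else is bookkeeping.
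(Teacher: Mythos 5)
Your proposal is correct and follows essentially the same route as the paper: the same transfinite back-and-forth construction of $\psi_\gamma : Y I\argp{\gamma} \to X I\argp{\gamma+1}$ satisfying $\psi_\gamma \circ \phi_{I\argp{\gamma}} = X I\argp{\gamma \to \gamma+1}$ and $\phi_{I\argp{\gamma+1}} \circ \psi_\gamma = Y I\argp{\gamma \to \gamma+1}$, obtained at successor stages by factoring the inverse of $\indlim_\mathcal{I} \phi$ through a coprojection and reflecting the two global identities to a bounded stage via $\lambda$-presentability and $\lambda$-filteredness. The only cosmetic difference is that the paper phrases the conclusion as ``$\indlim_{\gamma<\kappa}\phi_{I\argp{\gamma}}$ is a split monomorphism and a split epimorphism'' rather than exhibiting $\indlim_{\gamma<\kappa}\psi_\gamma$ as a two-sided inverse outright, but the witnessing morphism is the same.
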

\begin{proof}
Let $C = \indlim_\mathcal{I} X$ and $D = \indlim_\mathcal{I} Y$, let $c_i : X i \to C$ and $d_i : Y i \to D$ are the components of the respective colimiting cocones and let $e = \indlim_\mathcal{I} \phi$.
We will construct $I : \kappa \to \mathcal{I}$ by transfinite induction.
\begin{itemize}
\item Let $I \argp{0} = i_0$.

\item Given an ordinal $\alpha < \kappa$ and an object $I \argp{\alpha}$ in $\mathcal{I}$, choose an object $I \argp{\alpha+1}$ in $\mathcal{I}$ and a morphism $I \argp{\alpha \to \alpha+1} : I \argp{\alpha} \to I \argp{\alpha+1}$ in $\mathcal{I}$ for which there is a morphism $Y I \argp{\alpha} \to X I \argp{\alpha+1}$ making the diagram in $\mathcal{E}$ shown below commute:
\[
\begin{tikzcd}[column sep=9.0ex]
X I \argp{\alpha} \dar[swap]{\phi_{I \argp{\alpha}}} \rar{X I \argp{\alpha \to \alpha+1}} &
X I \argp{\alpha+1} \dar{\phi_{I \argp{\alpha+1}}} \rar{c_{I \argp{\alpha+1}}} \rar &
C \dar{e} \\
Y I \argp{\alpha} \urar[dashed]{\psi_\alpha} \rar[swap]{Y I \argp{\alpha \to \alpha+1}} &
Y I \argp{\alpha+1} \rar[swap]{d_{I \argp{\alpha+1}}} &
D
\end{tikzcd}
\]
Such a choice exists: since $Y I \argp{\alpha}$ is a $\lambda$-presentable object in $\mathcal{E}$ and $\mathcal{I}$ is $\lambda$-filtered, there is an object $i'$ in $\mathcal{I}$ and a commutative diagram in $\mathcal{E}$ of the form below,
\[
\begin{tikzcd}
X I \argp{\alpha} \arrow{dd}[swap]{\phi_{I \argp{\alpha}}} \drar[dashed]{s} \arrow{rr}{c_{I \argp{\alpha}}} &&
C \dar[equals] \\
&
X i' \rar{c_{i'}} \rar &
C \dar[leftarrow]{\inv{e}} \\
Y I \argp{\alpha} \urar[dashed, swap]{t} \arrow{rr}[swap]{d_{I \argp{\alpha}}} &&
D
\end{tikzcd}
\]
so there exist an object $i''$ in $\mathcal{I}$ and morphisms $u : I \argp{\alpha} \to i''$ and $v : i' \to i''$ such that the following diagram in $\mathcal{E}$ commutes,
\[
\begin{tikzcd}
X I \argp{\alpha} \drar[swap]{s} \arrow{rr}{X u} &&
X i'' \\
&
X i' \urar[swap]{X v}
\end{tikzcd}
\]
and similarly, there exist an object $I \argp{\alpha+1}$ in $\mathcal{I}$ and a morphism $w : i'' \to I \argp{\alpha+1}$ in $\mathcal{I}$ such that the diagram in $\mathcal{E}$ shown below commutes,
\[
\begin{tikzcd}
{} &
X i' \rar{X v} &
X i'' \rar{X w} &
X I \argp{\alpha+1} \dar{\phi_{I \argp{\alpha+1}}} \\
Y I \argp{\alpha} \urar{t} \arrow{rr}[swap]{Y u} &&
Y i'' \rar[swap]{Y w} &
Y I \argp{\alpha+1}
\end{tikzcd}
\]
so we may take $\psi_\alpha : Y I \argp{\alpha} \to X I \argp{\alpha+1}$ to be the composite $X w \circ X v \circ t$ and $I \argp{\alpha \to \alpha+1} : I \argp{\alpha} \to I \argp{\alpha+1}$ to be the composite $w \circ u$.

\item Given a limit ordinal $\beta < \kappa$, assuming $I$ is defined on the ordinals $\alpha < \beta$, define $I \argp{\beta}$ and $I \argp{\alpha \to \beta}$ (for $\alpha < \beta$) by choosing a cocone over the given $\alpha$-chain in $\mathcal{I}$.
\end{itemize}
The above yields a chain $I : \kappa \to \mathcal{I}$.
By construction, for every ordinal $\alpha < \kappa$, the following diagram in $\mathcal{E}$ commutes,
\[
\begin{tikzcd}
X I \argp{\alpha} \dar[swap]{\phi_{I \argp{\alpha}}} \rar &
\indlim_{\gamma < \kappa} X I \argp{\gamma} \dar{\indlim_{\gamma < \kappa} \phi_{I \argp{\gamma}}} \\
Y I \argp{\alpha} \dar[swap]{\psi_\alpha} \rar &
\indlim_{\gamma < \kappa} Y I \argp{\gamma} \dar{\indlim_{\gamma < \kappa} \psi_\gamma} \\
X I \argp{\alpha+1} \rar &
\indlim_{\gamma < \kappa} X I \argp{\gamma+1}
\end{tikzcd}
\]
where the horizontal arrows are the respective colimiting cocone components.
The composite of the left column is $X I \argp{\alpha \to \alpha+1} : X I \argp{\alpha} \to X I \argp{\alpha+1}$, so $\indlim_{\gamma < \kappa} \phi_{I \argp{\gamma}} : \indlim_{\gamma < \kappa} X I \argp{\gamma} \to \indlim_{\gamma < \kappa} Y I \argp{\gamma}$ is a split monomorphism in $\mathcal{E}$.
Similarly, the diagram below commutes,
\[
\begin{tikzcd}
Y I \argp{\alpha} \dar[swap]{\psi_\alpha} \rar &
\indlim_{\gamma < \kappa} Y I \argp{\gamma} \dar{\indlim_{\gamma < \kappa} \psi_\gamma} \\
X I \argp{\alpha+1} \dar[swap]{\phi_{I \argp{\alpha+1}}} \rar &
\indlim_{\gamma < \kappa} X I \argp{\gamma} \dar{\indlim_{\gamma < \kappa} \phi_{I \argp{\gamma}}} \\
Y I \argp{\alpha+1} \rar &
\indlim_{\gamma < \kappa} Y I \argp{\gamma+1}
\end{tikzcd}
\]
so $\indlim_{\gamma < \kappa} \phi_{I \argp{\gamma}} : \indlim_{\gamma < \kappa} X I \argp{\gamma} \to \indlim_{\gamma < \kappa} Y I \argp{\gamma}$ is also a split epimorphism in $\mathcal{E}$.
Thus, $I : \kappa \to \mathcal{I}$ is the desired chain.
\end{proof}

The following notion is due to \citet{Makkai-Pare:1989}.

\begin{dfn}
Let $\kappa$ and $\lambda$ be regular cardinals.
We write `$\kappa \sharplylt \lambda$' and we say `$\kappa$ is \strong{sharply less than} $\lambda$' for the following condition:
\begin{itemize}
\item $\kappa < \lambda$ and, for all $\lambda$-small sets $X$, there is a $\lambda$-small cofinal subset of $\powerset[\kappa]{X}$, the set of all $\kappa$-small subsets of $X$ (partially ordered by inclusion).
\end{itemize}
\end{dfn}

\begin{example}
If $\lambda$ is an uncountable regular cardinal, then $\aleph_0 \sharplylt \lambda$: indeed, for any $\lambda$-small set $X$, the set $\powerset[\aleph_0]{X}$ itself is $\lambda$-small.
\end{example}

\begin{example}
If $\lambda$ is a strongly inaccessible cardinal and $\kappa < \lambda$, then $\kappa \sharplylt \lambda$: indeed, for any $\lambda$-small set $X$, the set $\powerset[\kappa]{X}$ itself is $\lambda$-small.
\end{example}

\begin{example}
Let $\kappa^+$ be the cardinal successor of $\kappa$.
Then $\kappa \sharplylt \kappa^+$: every $\kappa^+$-small set can be mapped bijectively onto an initial segment $\alpha$ of $\kappa$ (but possibly all of $\kappa$), and it is clear that the subposet
\[
\set{ \beta }{ \beta \le \alpha } \subseteq \powerset[\kappa]{\alpha}
\]
is a $\kappa^+$-small cofinal subposet of $\powerset[\kappa]{\alpha}$: given any $\kappa$-small subset $X \subseteq \alpha$, we must have $\sup X \le \alpha$, and $X \subseteq \sup X$ by definition.
\end{example}

The following is a partial converse to \autoref{lem:small.objects}.

\begin{prop}
\label{prop:presentable.objects.are.presentable}
Let $\mathcal{C}$ be a $\kappa$-accessible category.
If $\lambda$ is a regular cardinal and $\kappa \sharplylt \lambda$, then the following are equivalent for an object $C$ in $\mathcal{C}$:
\begin{enumerate}[(i)]
\item $C$ is a $\lambda$-presentable object in $\mathcal{C}$.

\item There is a $\lambda$-small $\kappa$-filtered diagram $A : \mathcal{J} \to \mathcal{C}$ such that each $A j$ is a $\kappa$-presentable object in $\mathcal{C}$ and $C \cong \indlim_\mathcal{J} A$.

\item There is a $\lambda$-small $\kappa$-directed diagram $A : \mathcal{J} \to \mathcal{C}$ such that each $A j$ is a $\kappa$-presentable object in $\mathcal{C}$ and $C$ is a retract of $\indlim_\mathcal{J} A$.
\end{enumerate}
\end{prop}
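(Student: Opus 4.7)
The plan is to prove the round-robin $(ii) \Rightarrow (iii) \Rightarrow (i) \Rightarrow (ii)$, the last being the substantial implication and the only place where $\kappa \sharplylt \lambda$ enters crucially.

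For $(ii) \Rightarrow (iii)$, any colimit is trivially a retract of itself, and any $\lambda$-small $\kappa$-filtered diagram admits a cofinal $\lambda$-small $\kappa$-directed poset. For $(iii) \Rightarrow (i)$, observe that $\kappa < \lambda$ forces every $\kappa$-presentable object to be $\lambda$-presentable (since $\lambda$-filtered diagrams are $\kappa$-filtered), so \autoref{lem:small.objects} (applied with $\kappa$ replaced by $\lambda$) shows that any $\lambda$-small colimit of $\lambda$-presentable objects is again $\lambda$-presentable; and a retract $C$ of a $\lambda$-presentable object $D$ is $\lambda$-presentable because $\Hom[\mathcal{C}]{C}{\blank}$ is a retract of $\Hom[\mathcal{C}]{D}{\blank}$ in $\Func{\mathcal{C}}{\cat{\Set}}$, and a retract of a functor preserving $\lambda$-filtered colimits preserves the same colimits.

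For $(i) \Rightarrow (ii)$, use $\kappa$-accessibility of $\mathcal{C}$ to write $C \cong \indlim_{\mathcal{I}} A$ as a $\kappa$-directed colimit in which each $A i$ is a $\kappa$-presentable object of $\mathcal{C}$. Let $\mathcal{J}$ be the poset of $\lambda$-small $\kappa$-directed sub-posets of $\mathcal{I}$, ordered by inclusion. The sharpness hypothesis $\kappa \sharplylt \lambda$ is precisely what is needed to show that $\mathcal{J}$ is $\lambda$-directed and that $\mathcal{I} = \bigcup_{J \in \mathcal{J}} J$: given any $\lambda$-small subset $S \subseteq \mathcal{I}$, one forms its $\kappa$-directed closure by iteratively adjoining upper bounds for $\kappa$-small subsets over $\kappa$-many stages, and each step adds at most $\lambda$-many elements because a $\lambda$-small set has a $\lambda$-small cofinal subposet of $\kappa$-small subsets by sharpness; regularity of $\lambda$ then keeps the entire closure $\lambda$-small. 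Setting $C_J \coloneqq \indlim_{j \in J} A j$ for each $J \in \mathcal{J}$, each $C_J$ is $\lambda$-presentable by the preceding paragraph, and a Fubini argument for colimits gives $C \cong \indlim_{J \in \mathcal{J}} C_J$, a $\lambda$-filtered colimit of $\lambda$-presentable objects.

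Finally, apply \autoref{lem:back-and-forth.approximation.of.isomorphisms} to the natural transformation between the $\mathcal{J}$-indexed diagram $J \mapsto C_J$ and the constant $\mathcal{J}$-indexed diagram at $C$: both diagrams are $\lambda$-filtered with $\lambda$-presentable vertices, and the induced map of colimits is $\id_C$. This yields a chain $K : \kappa \to \mathcal{J}$ with $\indlim_{\gamma < \kappa} C_{K \argp{\gamma}} \to C$ an isomorphism. Setting $K_\infty \coloneqq \bigcup_{\gamma < \kappa} K \argp{\gamma}$, the regularity of $\lambda$ together with $\kappa < \lambda$ makes $K_\infty$ a $\lambda$-small $\kappa$-directed sub-poset of $\mathcal{I}$, and a final Fubini argument yields $C \cong \indlim_{j \in K_\infty} A j$, establishing (ii). The main obstacle is the sharpness-based construction of $\kappa$-directed closures of $\lambda$-small sets; once this is in hand, \autoref{lem:back-and-forth.approximation.of.isomorphisms} does the work of extracting a single $\lambda$-small $\kappa$-directed sub-poset whose associated colimit is $C$ rather than merely a retract of $C$.
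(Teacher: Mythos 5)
The paper does not actually prove this proposition: it is delegated wholesale to Proposition 2.3.11 and Theorem 2.3.10 of \citep{Makkai-Pare:1989}. Your argument is therefore necessarily a different route, and in substance it is the right one: the closing-off construction of $\lambda$-small $\kappa$-directed subposets of $\mathcal{I}$ (using $\kappa \sharplylt \lambda$ to select, at each of $\kappa$ successive stages, a $\lambda$-small cofinal family of $\kappa$-small subsets for which to adjoin upper bounds, with regularity of $\lambda$ and $\kappa < \lambda$ keeping the increasing union $\lambda$-small) is exactly the standard Makkai--Par\'e argument, and your use of \autoref{lem:back-and-forth.approximation.of.isomorphisms} to upgrade ``$C$ is a retract of some $C_J$'' to ``$C \cong \indlim_{j \in K_\infty} A j$'' is precisely what is needed to obtain (ii) rather than merely (iii); it also parallels how the paper deploys that lemma in \autoref{thm:accessible.inverters}. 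The easy direction is likewise fine: \autoref{lem:small.objects} with $\kappa$ replaced by $\lambda$, together with closure of $\lambda$-presentable objects under retracts.

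One step is under-justified, and your framing of it is slightly off. You dispose of (ii) $\Rightarrow$ (iii) by asserting that ``any $\lambda$-small $\kappa$-filtered diagram admits a cofinal $\lambda$-small $\kappa$-directed poset'', and you claim that sharpness enters crucially only in (i) $\Rightarrow$ (ii). The quoted fact is true under $\kappa \sharplylt \lambda$, but it is not free: the usual cofinal-poset construction (\eg Theorem~1.5 in \citep{LPAC}) produces the poset of $\kappa$-small subcategories equipped with compatible cocone vertices, which need not be $\lambda$-small, and cutting it down to a $\lambda$-small cofinal $\kappa$-directed subposet requires essentially the same sharpness-based closing-off that you carry out later. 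So either supply that argument, or --- more economically --- reroute the cycle: your (i) $\Rightarrow$ (ii) construction already lands in a $\kappa$-\emph{directed} $\lambda$-small poset $K_\infty$, so it proves (i) $\Rightarrow$ (iii) outright, and (ii) $\Rightarrow$ (i) is the same \autoref{lem:small.objects} computation as (iii) $\Rightarrow$ (i) without the retract. With that reorganisation nothing is missing.
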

\begin{proof} \openproof
(i) \iff (ii).
See Proposition 2.3.11 in \citep{Makkai-Pare:1989}.

\medskip\noindent
(i) \iff (iii).
See the proof of Theorem~2.3.10 in \citep{Makkai-Pare:1989} or Remark 2.15 in \citep{LPAC}.
\end{proof}

\begin{lem}
\label{lem:estimate.of.morphisms.in.accessible.categories}
Let $\mathcal{C}$ be a $\kappa$-accessible category, let $A$ be a $\kappa$-presentable object in $\mathcal{C}$, and let $B$ be a $\lambda$-presentable object in $\mathcal{C}$.
If the hom-set $\Hom[\mathcal{C}]{A}{A'}$ is $\mu$-small for all $\kappa$-presentable objects $A'$ in $\mathcal{C}$ and $\kappa \sharplylt \lambda$, then the hom-set $\Hom[\mathcal{C}]{A}{B}$ has cardinality $< \max \set{ \lambda, \mu }$.
\end{lem}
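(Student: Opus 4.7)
The plan is to represent $B$ as a retract of a $\lambda$-small colimit of $\kappa$-presentable objects (using the hypothesis $\kappa \sharplylt \lambda$ and $\kappa$-accessibility of $\mathcal{C}$), then use that $A$ is $\kappa$-presentable to move $\Hom[\mathcal{C}]{A}{\blank}$ through the colimit, and finally estimate the resulting set using the hypothesis on $\mu$-smallness of the hom-sets between $\kappa$-presentable objects.

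Concretely, since $\mathcal{C}$ is $\kappa$-accessible, $B$ is $\lambda$-presentable, and $\kappa \sharplylt \lambda$, \autoref{prop:presentable.objects.are.presentable} (iii) provides a $\lambda$-small $\kappa$-directed diagram $A' : \mathcal{J} \to \mathcal{C}$ with every $A' j$ $\kappa$-presentable and $B$ a retract of $C \mathrel{:=} \indlim_\mathcal{J} A'$. Since $A$ is $\kappa$-presentable, $\Hom[\mathcal{C}]{A}{\blank}$ preserves $\kappa$-filtered (and in particular $\kappa$-directed) colimits, so
\[
\Hom[\mathcal{C}]{A}{C} \cong \indlim_\mathcal{J} \Hom[\mathcal{C}]{A}{A' j}
\]
as sets, and the right-hand side is a quotient of the disjoint union $\bigsqcup_{j \in \ob \mathcal{J}} \Hom[\mathcal{C}]{A}{A' j}$.

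Now I estimate the cardinality of that disjoint union. The indexing set $\ob \mathcal{J}$ has cardinality $< \lambda$, and by hypothesis every summand has cardinality $< \mu$; thus each factor in the product $\card{\ob \mathcal{J}} \cdot \sup_j \card{\Hom[\mathcal{C}]{A}{A' j}}$ is $< \max \set{\lambda, \mu}$. Since $\max \set{\lambda, \mu}$ is an infinite regular cardinal, cardinal arithmetic gives that this product is itself $< \max \set{\lambda, \mu}$, so $\card{\Hom[\mathcal{C}]{A}{C}} < \max \set{\lambda, \mu}$. Finally, because $B$ is a retract of $C$, postcomposition with the section $B \to C$ embeds $\Hom[\mathcal{C}]{A}{B}$ into $\Hom[\mathcal{C}]{A}{C}$, yielding the desired bound.

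No step looks to be a real obstacle: the only point worth care is the final cardinal estimate, to ensure the bound $<\max\set{\lambda,\mu}$ is strict (rather than just $\le$), which reduces to the fact that $\alpha \cdot \beta < \nu$ whenever $\alpha, \beta < \nu$ and $\nu$ is an infinite cardinal. Note that we do not need both $\lambda$ and $\mu$ to be above $\kappa$, nor any special relation between them beyond what is stated.
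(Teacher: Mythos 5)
Your proof is correct and is essentially the paper's own argument: both exhibit $B$ as a retract of a $\lambda$-small $\kappa$-filtered colimit of $\kappa$-presentable objects via \autoref{prop:presentable.objects.are.presentable}, commute $\Hom[\mathcal{C}]{A}{\blank}$ past the colimit using $\kappa$-presentability of $A$, bound the resulting colimit of sets by regularity of $\max\set{\lambda,\mu}$, and finish with the retract observation. The only difference is that you spell out the cardinality estimate explicitly where the paper merely gestures at \autoref{lem:small.objects}.
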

\begin{proof}
By \autoref{prop:presentable.objects.are.presentable}, there is a $\lambda$-small $\kappa$-filtered diagram $Y : \mathcal{J} \to \mathcal{C}$ such that each $Y j$ is a $\kappa$-presentable object in $\mathcal{C}$ and $B$ is a retract of $\indlim_\mathcal{J} Y$.
Since $A$ is a $\kappa$-presentable object in $\mathcal{C}$, we have
\[
\Hom[\mathcal{C}]{A}{\indlim_\mathcal{J} Y} \cong \textstyle \indlim_\mathcal{J} \Hom[\mathcal{C}]{A}{Y}
\]
and the RHS is a set of cardinality $< \max \set{ \lambda, \mu }$ by \autoref{lem:small.objects}; but $\Hom[\mathcal{C}]{A}{B}$ is a retract of the LHS, so we are done.
\end{proof}
\section{Accessible constructions}
\label{sect:accessible.constructions}

\begin{numpar}
Throughout this section, $\kappa$ is an arbitrary regular cardinal.
\end{numpar}

\begin{dfn}
A \strong{strongly $\kappa$-accessible functor} is a functor $F : \mathcal{C} \to \mathcal{D}$ with the following properties:
\begin{itemize}
\item Both $\mathcal{C}$ and $\mathcal{D}$ are $\kappa$-accessible categories.

\item $F$ preserves colimits of small $\kappa$-filtered diagrams.

\item $F$ sends $\kappa$-presentable objects in $\mathcal{C}$ to $\kappa$-presentable objects in $\mathcal{D}$.
\end{itemize}
\end{dfn}

\begin{example}
Given any functor $F : \mathcal{A} \to \mathcal{B}$, if $\mathcal{A}$ and $\mathcal{B}$ are essentially small categories, then the induced functor $\Ind[\kappa]{F} : \Ind[\kappa]{\mathcal{A}} \to \Ind[\kappa]{\mathcal{B}}$ is strongly $\kappa$-accessible.
If $\mathcal{B}$ is also idempotent-complete, then every strongly $\kappa$-accessible functor $\Ind[\kappa]{\mathcal{A}} \to \Ind[\kappa]{\mathcal{B}}$ is of this form (up to isomorphism).
\end{example}

\begin{prop}[Products of accessible categories]
\label{prop:products.of.accessible.categories}
If $\seq{\mathcal{C}_i}{i \in I}$ is a $\kappa$-small family of $\kappa$-accessible categories, then:
\begin{enumerate}[(i)]
\item The product $\mathcal{C} = \prod_{i \in I} \mathcal{C}_i$ is a $\kappa$-accessible category.

\item Moreover, the projection functors $\mathcal{C} \to \mathcal{C}_i$ are strongly $\kappa$-accessible functors.
\end{enumerate}
\end{prop}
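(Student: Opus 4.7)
The plan is to verify the two clauses defining $\kappa$-accessibility for $\mathcal{C} = \prod_{i \in I} \mathcal{C}_i$ and to check directly that each projection $p_i : \mathcal{C} \to \mathcal{C}_i$ preserves both $\kappa$-filtered colimits and $\kappa$-presentability. Since limits and colimits in a product category are computed componentwise, $\mathcal{C}$ inherits all small $\kappa$-filtered colimits and each $p_i$ preserves them automatically; what remains is to exhibit an essentially small class of $\kappa$-presentable objects of $\mathcal{C}$ that generates it under $\kappa$-filtered colimits.

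The key combinatorial lemma to have in hand is that a product $\prod_{i \in I} \mathcal{J}_i$ of $\kappa$-filtered categories is itself $\kappa$-filtered: given any $\kappa$-small diagram in the product, project to each factor, pick a cocone over the projected diagram in $\mathcal{J}_i$ by $\kappa$-filteredness, and assemble the chosen components into a single cocone in the product. With this in hand, I claim that any tuple $(A_i)$ with each $A_i \in \Kompakt[\kappa]{\mathcal{C}_i}$ lies in $\Kompakt[\kappa]{\mathcal{C}}$: since $\Hom[\mathcal{C}]{(A_i)}{(B_i)} \cong \prod_{i \in I} \Hom[\mathcal{C}_i]{A_i}{B_i}$ and $\kappa$-filtered colimits in $\mathcal{C}$ are computed componentwise, the claim reduces to the fact that $\kappa$-small products commute with $\kappa$-filtered colimits in $\cat{\Set}$, and this is where the hypothesis $|I| < \kappa$ is used essentially. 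For an arbitrary $(C_i) \in \mathcal{C}$ I would write each $C_i$ as the colimit of a small $\kappa$-filtered diagram $A_i : \mathcal{J}_i \to \Kompakt[\kappa]{\mathcal{C}_i}$ and form the product diagram $\prod_{i \in I} \mathcal{J}_i \to \mathcal{C}$, $(j_i) \mapsto (A_i(j_i))$; its source is $\kappa$-filtered by the lemma, and componentwise computation of colimits together with the cofinality of each projection $\prod_{i \in I} \mathcal{J}_i \to \mathcal{J}_i$ identifies its colimit with $(C_i)$. Essential smallness of the full subcategory of $\mathcal{C}$ spanned by such tuples follows from essential smallness of each $\Kompakt[\kappa]{\mathcal{C}_i}$, completing (i).

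For (ii), only preservation of $\kappa$-presentability still requires argument. Given $(C_i) \in \Kompakt[\kappa]{\mathcal{C}}$, present it as the $\kappa$-filtered colimit of tuples of $\kappa$-presentables constructed above; then the identity on $(C_i)$ factors through some colimit coprojection, exhibiting $(C_i)$ as a retract of some tuple $(A_i^{(j_i)})$ in the diagram, and projecting shows $C_i$ is a retract of $A_i^{(j_i)}$ and hence $\kappa$-presentable in $\mathcal{C}_i$, since retracts of $\kappa$-presentables are $\kappa$-presentable. The step I expect to be the main obstacle is the middle claim about tuples of $\kappa$-presentables: the hypothesis $|I| < \kappa$ cannot be weakened there, because that is precisely the range in which $I$-indexed products commute with $\kappa$-filtered colimits in $\cat{\Set}$.
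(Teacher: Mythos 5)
Your argument is correct and follows essentially the same route as the paper's (which is terser): componentwise computation of colimits, the fact that $\kappa$-small products commute with $\kappa$-filtered colimits in $\cat{\Set}$ to see that tuples of $\kappa$-presentables are $\kappa$-presentable, and the lemma that products of $\kappa$-filtered categories are $\kappa$-filtered with cofinal projections to exhibit every object as such a colimit. Your retract argument for part (ii) is the standard completion of the paper's proof, which leaves that step implicit.
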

\begin{proof}
It is clear that $\mathcal{C}$ has colimits of small $\kappa$-filtered diagrams: indeed, they can be computed componentwise.
Since ${\prod} : \cat{\Set}^I \to \cat{\Set}$ preserves colimits of small $\kappa$-filtered diagrams, an object in $\mathcal{C}$ is $\kappa$-presentable as soon as its components are $\kappa$-presentable objects in their respective categories.
Recalling \autoref{lem:products.of.filtered.categories}, it follows that $\mathcal{C}$ is generated under small $\kappa$-filtered colimits by a small family of $\kappa$-presentable objects, as required of a $\kappa$-accessible category.
\end{proof}

\begin{lem}
\label{lem:accessible.functors.are.strongly.accessible}
Let $\mathcal{C}$ and $\mathcal{D}$ be accessible categories and let $F : \mathcal{C} \to \mathcal{D}$ be a $\kappa$-accessible functor.
\begin{enumerate}[(i)]
\item There is a regular cardinal $\lambda$ such that $F$ is a strongly $\lambda$-accessible functor.

\item Moreover, if $\mu$ is a regular cardinal such that $\kappa \sharplylt \mu$ and $\lambda \le \mu$, then $F$ also sends $\mu$-presentable objects in $\mathcal{C}$ to $\mu$-presentable objects in $\mathcal{D}$.
\end{enumerate}
\end{lem}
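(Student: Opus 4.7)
The plan for part (i) is to find a single regular cardinal $\lambda$ satisfying three simultaneous requirements: (a) $\kappa \sharplylt \lambda$; (b) both $\mathcal{C}$ and $\mathcal{D}$ are $\lambda$-accessible; and (c) $F$ sends every $\kappa$-presentable object of $\mathcal{C}$ to a $\lambda$-presentable object of $\mathcal{D}$. The critical observation making (c) achievable with one cardinal is that $\Kompakt[\kappa]{\mathcal{C}}$ is essentially small, so its $F$-image is essentially a small collection of objects of $\mathcal{D}$; since $\mathcal{D}$ is accessible, each such image is $\nu$-presentable for some $\nu$, and these finitely many bounds can be aggregated into a single cardinal. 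Standard cardinal arithmetic on accessible categories then allows (a), (b), (c) to be met jointly.

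Having fixed such $\lambda$, preservation of small $\lambda$-filtered colimits by $F$ is automatic from $\kappa$-accessibility, since every $\lambda$-filtered diagram is $\kappa$-filtered. It remains to upgrade (c) from $\kappa$-presentable to $\lambda$-presentable inputs. For this I invoke the retract characterization of \autoref{prop:presentable.objects.are.presentable}: any $\lambda$-presentable object $C$ in $\mathcal{C}$ is a retract of some $\indlim_\mathcal{J} A$, where $\mathcal{J}$ is a $\lambda$-small $\kappa$-filtered category and each $A(j)$ is a $\kappa$-presentable object of $\mathcal{C}$. Applying $F$ (which preserves this $\kappa$-filtered colimit) shows $F(C)$ is a retract of $\indlim_\mathcal{J} F A$; each $F(A(j))$ is $\lambda$-presentable by (c), and $\mathcal{J}$ is $\lambda$-small, so \autoref{lem:small.objects} makes $\indlim_\mathcal{J} F A$ a $\lambda$-presentable object of $\mathcal{D}$. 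A retract of a $\lambda$-presentable object is $\lambda$-presentable, so we are done.

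For part (ii), the only new observation is that any $\lambda$-presentable object of $\mathcal{D}$ is automatically $\mu$-presentable once $\lambda \le \mu$, since $\mu$-filtered diagrams are in particular $\lambda$-filtered. The argument of the previous paragraph then runs verbatim with $\mu$ in place of $\lambda$: use \autoref{prop:presentable.objects.are.presentable} (applicable because $\kappa \sharplylt \mu$) to decompose a $\mu$-presentable object of $\mathcal{C}$ as a retract of a $\mu$-small $\kappa$-filtered colimit of $\kappa$-presentables, push it through $F$, and conclude via \autoref{lem:small.objects}.

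The main obstacle is really the bookkeeping for $\lambda$ in part (i): aggregating sharpness, joint $\lambda$-accessibility of the two categories, and a uniform presentability bound on $F$'s image on $\Kompakt[\kappa]{\mathcal{C}}$. This succeeds only because $\Kompakt[\kappa]{\mathcal{C}}$ is essentially small; once $\lambda$ is fixed, everything reduces to the retract characterization of presentable objects combined with the stability of presentability under small colimits.
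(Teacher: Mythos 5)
Your proof is correct and follows the same route as the paper, which for part (i) simply cites the uniformization theorem (Theorem~2.19 in \citep{LPAC}) and for part (ii) invokes exactly your combination of \autoref{prop:presentable.objects.are.presentable} and \autoref{lem:small.objects}; your sketch of (i) is the standard argument behind that citation. One trivial slip: the image of $\Kompakt[\kappa]{\mathcal{C}}$ is essentially small rather than finite, so you are aggregating a \emph{set} of presentability bounds, not finitely many --- which of course still admits a regular upper bound.
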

\begin{proof} \openproof
(i).
See Theorem~2.19 in \citep{LPAC}.

\medskip\noindent
(ii).
Apply \autoref{lem:small.objects} and \autoref{prop:presentable.objects.are.presentable}.
\end{proof}

\begin{prop}
\label{prop:locally.presentable.functor.categories}
If $\mathcal{C}$ is a locally $\kappa$-presentable category and $\mathcal{D}$ is any small category, then the functor category $\Func{\mathcal{D}}{\mathcal{C}}$ is also a locally $\kappa$-presentable category.
\end{prop}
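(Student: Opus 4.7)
The plan is to verify directly that $\Func{\mathcal{D}}{\mathcal{C}}$ is cocomplete and admits an essentially small set of $\kappa$-presentable objects whose closure under small $\kappa$-filtered colimits is all of $\Func{\mathcal{D}}{\mathcal{C}}$. Cocompleteness is immediate since colimits in $\Func{\mathcal{D}}{\mathcal{C}}$ are formed pointwise in the cocomplete category $\mathcal{C}$.

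For the generators, I would use left adjoints to evaluation. For each object $d$ of $\mathcal{D}$, the evaluation functor $\mathrm{ev}_d : \Func{\mathcal{D}}{\mathcal{C}} \to \mathcal{C}$ preserves all colimits and admits a left adjoint $d_!$, where $d_!(C)(d') = \coprod_{\Hom[\mathcal{D}]{d}{d'}} C$. The adjunction isomorphism $\Hom[\Func{\mathcal{D}}{\mathcal{C}}]{d_!(C)}{F} \cong \Hom[\mathcal{C}]{C}{F(d)}$, combined with the pointwise formation of $\kappa$-filtered colimits, shows that $d_!(C)$ is $\kappa$-presentable in $\Func{\mathcal{D}}{\mathcal{C}}$ whenever $C$ is $\kappa$-presentable in $\mathcal{C}$. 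Fixing a small set $\mathcal{G} \subseteq \Kompakt[\kappa]{\mathcal{C}}$ of $\kappa$-presentable generators of $\mathcal{C}$, I set $\mathcal{G}' = \{ d_!(C) : d \in \ob \mathcal{D},\ C \in \mathcal{G} \}$; this is an essentially small family of $\kappa$-presentable objects of $\Func{\mathcal{D}}{\mathcal{C}}$.

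It remains to show that every $F : \mathcal{D} \to \mathcal{C}$ is a small $\kappa$-filtered colimit of objects of $\mathcal{G}'$. The canonical coend presentation $F \cong \int^{d \in \mathcal{D}} d_!(F(d))$ exhibits $F$ as a (small) colimit of objects of the form $d_!(F(d))$; writing each $F(d)$ as a $\kappa$-filtered colimit of objects of $\mathcal{G}$ and using that each $d_!$ preserves colimits, I obtain $F$ as a small colimit of objects of $\mathcal{G}'$. Invoking \autoref{lem:small.objects}, $\kappa$-small colimits of $\kappa$-presentable objects remain $\kappa$-presentable; and any small colimit in a cocomplete category can be reorganised as a (small) $\kappa$-filtered colimit of its $\kappa$-small subcolimits, which then furnishes the desired $\kappa$-filtered presentation of $F$ in terms of $\kappa$-presentable objects.

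The main obstacle is the final reorganisation step: turning an arbitrary small colimit of $\kappa$-presentable objects into a $\kappa$-filtered colimit of $\kappa$-small subdiagrams, while simultaneously tracking the coend index and the separate filtered indexing categories $\mathcal{J}_d$ into a single Grothendieck-construction-style small $\kappa$-filtered diagram. The remainder of the argument reduces to the adjoint-functor calculus and a direct density computation.
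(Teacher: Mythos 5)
Your proof is correct, but the comparison is slightly vacuous on the paper's side: the paper gives no argument at all for this proposition and simply cites Corollary~1.54 in \citep{LPAC}. The proof there proceeds via the representation theorem, exhibiting a locally $\kappa$-presentable category as a reflective, accessibly embedded subcategory of a presheaf category and transporting that presentation across $\Func{\mathcal{D}}{\blank}$; your route is instead a direct verification from the definition, and it is the standard one. The ingredients all check out: pointwise cocompleteness; the left Kan extensions $d_! C$ along the points $d : \mathbf{1} \to \mathcal{D}$, whose adjunction isomorphism $\Hom[{\Func{\mathcal{D}}{\mathcal{C}}}]{d_! C}{F} \cong \Hom[\mathcal{C}]{C}{F \argp{d}}$ together with pointwise computation of $\kappa$-filtered colimits makes each $d_! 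C$ $\kappa$-presentable; the co-Yoneda presentation $F \cong \int^{d} d_!\argp{F \argp{d}}$; and the reorganisation of an arbitrary small colimit as a $\kappa$-filtered colimit of its $\kappa$-small subcolimits, which works because the poset of $\kappa$-small subcategories of the (Grothendieck-construction) index category is $\kappa$-directed by regularity of $\kappa$, with each subcolimit $\kappa$-presentable by \autoref{lem:small.objects}. Two small points of hygiene: the generating \emph{set} you ultimately exhibit is not $\mathcal{G}'$ itself but (a small skeleton of) its closure under $\kappa$-small colimits, since the vertices of your final $\kappa$-filtered diagram are $\kappa$-small colimits of objects of $\mathcal{G}'$; and you should say explicitly that this closure is essentially small. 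What your approach buys over the citation is an explicit description of the $\kappa$-presentable generators of $\Func{\mathcal{D}}{\mathcal{C}}$, which dovetails with \autoref{prop:presentable.objects.in.diagram.categories}; what the citation buys is brevity.
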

\begin{proof} \openproof
See Corollary~1.54 in \citep{LPAC}.
\end{proof}

\begin{prop}
\label{prop:presentable.objects.in.diagram.categories}
Let $\mathcal{C}$ be a locally small category and let $\mathcal{D}$ be a $\kappa$-small category.
\begin{enumerate}[(i)]
\item If $\lambda$ is a regular cardinal $\ge \kappa$ such that $\mathcal{C}$ has colimits of small $\lambda$-filtered diagrams and $A : \mathcal{D} \to \mathcal{C}$ is a diagram whose vertices are $\lambda$-presentable objects in $\mathcal{C}$, then $A$ is a $\lambda$-presentable object in $\Func{\mathcal{D}}{\mathcal{C}}$.

\item If $\mathcal{C}$ is a $\lambda$-accessible category and has products for $\kappa$-small families of objects, then every $\lambda$-presentable object in $\Func{\mathcal{D}}{\mathcal{C}}$ is componentwise $\lambda$-presentable.
\end{enumerate}
\end{prop}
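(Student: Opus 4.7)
For part (i), my plan is to realise $\Hom[\Func{\mathcal{D}}{\mathcal{C}}]{A}{B}$ as the standard equaliser
\[
\Hom[\Func{\mathcal{D}}{\mathcal{C}}]{A}{B} \embedinto \prod_{d \in \ob \mathcal{D}} \Hom[\mathcal{C}]{A d}{B d} \rightrightarrows \prod_{(f : d \to d') \in \mor \mathcal{D}} \Hom[\mathcal{C}]{A d}{B d'},
\]
which exhibits $\Hom[\Func{\mathcal{D}}{\mathcal{C}}]{A}{\blank}$ as a $\kappa$-small limit in $\cat{\Set}$ of the composites $B \mapsto \Hom[\mathcal{C}]{A d}{B d}$. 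Small $\lambda$-filtered colimits in $\Func{\mathcal{D}}{\mathcal{C}}$ are computed pointwise; each $\Hom[\mathcal{C}]{A d}{\blank}$ preserves them by the hypothesis on $A d$; and $\kappa$-small limits commute with $\lambda$-filtered colimits in $\cat{\Set}$ because $\kappa \le \lambda$. Composing these three observations yields (i).

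For part (ii) the strategy is dual. Fix $d_0 \in \ob \mathcal{D}$. The evaluation functor $\mathrm{ev}_{d_0} : \Func{\mathcal{D}}{\mathcal{C}} \to \mathcal{C}$ admits a right adjoint $R_{d_0}$, given by the right Kan extension formula $R_{d_0}(C)(d) = \prod_{\Hom[\mathcal{D}]{d}{d_0}} C$, which is well-defined because every indexing set is $\kappa$-small and $\mathcal{C}$ has $\kappa$-small products. The adjunction bijection $\Hom[\mathcal{C}]{A d_0}{C} \cong \Hom[\Func{\mathcal{D}}{\mathcal{C}}]{A}{R_{d_0} C}$ then reduces the $\lambda$-presentability of $A d_0$ in $\mathcal{C}$ to the claim that $R_{d_0}$ preserves small $\lambda$-filtered colimits.

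Verifying this preservation is the main obstacle. Since colimits in $\Func{\mathcal{D}}{\mathcal{C}}$ are pointwise, it amounts to showing that the formation of $\kappa$-small products in $\mathcal{C}$ commutes with small $\lambda$-filtered colimits. I would test the canonical comparison morphism against an arbitrary $\lambda$-presentable object $P$: the representable $\Hom[\mathcal{C}]{P}{\blank}$ sends $\kappa$-small products to $\kappa$-small products in $\cat{\Set}$ and, because $P$ is $\lambda$-presentable, sends small $\lambda$-filtered colimits to $\lambda$-filtered colimits in $\cat{\Set}$; there, the two operations commute because $\kappa \le \lambda$. Since the $\lambda$-presentable objects are jointly conservative in the $\lambda$-accessible category $\mathcal{C}$, the original comparison is an isomorphism, completing the argument.
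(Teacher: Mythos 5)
Your argument is correct and is essentially the standard one: the paper gives no self-contained proof here, deferring to Proposition~2.23 of \citep{Low:2013a}, whose proof likewise uses the equaliser (end) presentation of hom-sets for (i) and the right adjoint to evaluation, built from $\kappa$-small products as a right Kan extension, for (ii). The only point worth flagging is that your proof of (ii), like that of (i), needs $\kappa \le \lambda$ in order to commute $\kappa$-small products past $\lambda$-filtered colimits in $\cat{\Set}$; this hypothesis is written explicitly only in (i), but it is the intended reading of (ii) as well and holds in every application made in the paper.
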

\begin{proof} \openproof
See (the proof of) Proposition~2.23 in \citep{Low:2013a}.
\end{proof}

\begin{dfn}
Given a regular cardinal $\kappa$, a \strong{$\kappa$-accessible subcategory} of a $\kappa$-accessible category $\mathcal{C}$ is a subcategory $\mathcal{B} \subseteq \mathcal{C}$ such that $\mathcal{B}$ is a $\kappa$-accessible category and the inclusion $\mathcal{B} \embedinto \mathcal{C}$ is a $\kappa$-accessible functor.
\end{dfn}

\begin{prop}
\label{prop:strongly.accessible.subcategories}
Let $\mathcal{C}$ be a $\kappa$-accessible category and let $\mathcal{B}$ be a replete and full $\kappa$-accessible subcategory of $\mathcal{C}$.
\begin{enumerate}[(i)]
\item If $A$ is a $\kappa$-presentable object in $\mathcal{C}$ and $A$ is in $\mathcal{B}$, then $A$ is also a $\kappa$-presentable object in $\mathcal{B}$.

\item If the inclusion $\mathcal{B} \embedinto \mathcal{C}$ is strongly $\kappa$-accessible, then $\Kompakt[\kappa]{\mathcal{B}} = \mathcal{B} \cap \Kompakt[\kappa]{\mathcal{C}}$.
\end{enumerate}
\end{prop}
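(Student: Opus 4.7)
The plan is to exploit two orthogonal ingredients: the full-and-repleteness of $\mathcal{B} \subseteq \mathcal{C}$, which lets us identify hom-sets computed in either category, and the hypothesis that the inclusion $\iota : \mathcal{B} \embedinto \mathcal{C}$ is $\kappa$-accessible, which means $\iota$ preserves colimits of small $\kappa$-filtered diagrams.

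For (i), suppose $A$ lies in $\mathcal{B}$ and is $\kappa$-presentable in $\mathcal{C}$. Let $D : \mathcal{J} \to \mathcal{B}$ be a small $\kappa$-filtered diagram, and let $\indlim_\mathcal{J} D$ denote its colimit in $\mathcal{B}$. Because $\iota$ preserves this colimit, $\iota \indlim_\mathcal{J} D$ is a colimit of $\iota D$ in $\mathcal{C}$. Since $\mathcal{B}$ is full in $\mathcal{C}$, the functor $\Hom[\mathcal{B}]{A}{\blank}$ coincides with $\Hom[\mathcal{C}]{A}{\iota \blank}$ on $\mathcal{B}$, so the canonical comparison
\[
\indlim_\mathcal{J} \Hom[\mathcal{B}]{A}{D} \to \Hom[\mathcal{B}]{A}{\indlim_\mathcal{J} D}
\]
is identified with the corresponding comparison in $\mathcal{C}$, which is a bijection since $A$ is $\kappa$-presentable in $\mathcal{C}$. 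Hence $A$ is $\kappa$-presentable in $\mathcal{B}$.

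For (ii), the inclusion $\mathcal{B} \cap \Kompakt[\kappa]{\mathcal{C}} \subseteq \Kompakt[\kappa]{\mathcal{B}}$ is exactly the content of (i). Conversely, if $A$ is a $\kappa$-presentable object in $\mathcal{B}$, then $A$ is in $\mathcal{B}$, and by the hypothesis that $\iota : \mathcal{B} \embedinto \mathcal{C}$ is strongly $\kappa$-accessible, $\iota A = A$ is a $\kappa$-presentable object in $\mathcal{C}$; thus $A \in \mathcal{B} \cap \Kompakt[\kappa]{\mathcal{C}}$, which gives the reverse inclusion.

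Neither direction is particularly hard once the definitions are unpacked; the only subtle point is making sure in (i) that fullness of $\mathcal{B}$ in $\mathcal{C}$ is enough to conclude the hom-set comparison transfers, and that the colimit in $\mathcal{B}$ really is reflected as a colimit in $\mathcal{C}$ by $\kappa$-accessibility of the inclusion. With these observations in place, the proof is a short diagram chase, and (ii) follows immediately by combining (i) with the extra hypothesis on $\iota$.
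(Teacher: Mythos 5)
Your proposal is correct and follows essentially the same route as the paper: part (i) comes down to the observation that fullness identifies the hom-sets and $\kappa$-accessibility of the inclusion identifies the relevant colimits, and part (ii) combines (i) with the defining property of a strongly $\kappa$-accessible inclusion. The paper simply compresses (i) into the remark that hom-sets and small $\kappa$-filtered colimits in $\mathcal{B}$ are computed as in $\mathcal{C}$; your write-up spells out the same comparison-map argument in detail.
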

\begin{proof}
(i).
This is clear, since hom-sets and colimits of small $\kappa$-filtered diagrams in $\mathcal{B}$ are computed as in $\mathcal{C}$.

\medskip\noindent
(ii).
Given (i), it suffices to show that every $\kappa$-presentable object in $\mathcal{B}$ is also $\kappa$-presentable in $\mathcal{C}$, but this is precisely the hypothesis that the inclusion $\mathcal{B} \embedinto \mathcal{C}$ is strongly $\kappa$-accessible.
\end{proof}

\begin{lem}
\label{lem:strongly.accessible.subcategory.criteria}
Let $\mathcal{C}$ be a $\kappa$-accessible category and let $\mathcal{B}$ be a full subcategory of $\mathcal{C}$.
Assuming $\mathcal{B}$ is closed in $\mathcal{C}$ under colimits of small $\kappa$-filtered diagrams, the following are equivalent:
\begin{enumerate}[(i)]
\item The inclusion $\mathcal{B} \embedinto \mathcal{C}$ is a strongly $\kappa$-accessible functor.

\item Given a morphism $f : X \to Y$ in $\mathcal{C}$, if $X$ is a $\kappa$-presentable object in $\mathcal{C}$ and $Y$ is an object in $\mathcal{B}$, then $f : X \to Y$ factors through an object in $\mathcal{B}$ that is $\kappa$-presentable as an object in $\mathcal{C}$.
\end{enumerate}
\end{lem}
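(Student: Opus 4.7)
The plan is to prove the two implications separately. For (i) $\Rightarrow$ (ii), assume the inclusion is strongly $\kappa$-accessible, so $\mathcal{B}$ is $\kappa$-accessible and $\Kompakt[\kappa]{\mathcal{B}} \subseteq \Kompakt[\kappa]{\mathcal{C}}$. Then any $Y \in \mathcal{B}$ can be written as a small $\kappa$-filtered colimit $\indlim_j A_j$ of objects $A_j \in \Kompakt[\kappa]{\mathcal{B}} \subseteq \Kompakt[\kappa]{\mathcal{C}} \cap \mathcal{B}$. Because $\mathcal{B}$ is closed under $\kappa$-filtered colimits in $\mathcal{C}$, the colimit is formed in $\mathcal{C}$ as well, so $\kappa$-presentability of $X$ in $\mathcal{C}$ yields a factorisation of $f : X \to Y$ through some $A_j$, which is an object of $\mathcal{B}$ that is $\kappa$-presentable in $\mathcal{C}$.

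For (ii) $\Rightarrow$ (i), let $\mathcal{B}_0 = \mathcal{B} \cap \Kompakt[\kappa]{\mathcal{C}}$. This is an essentially small full subcategory of $\mathcal{C}$, since $\Kompakt[\kappa]{\mathcal{C}}$ is. The core of the argument is to show that for every $Y \in \mathcal{B}$, the canonical forgetful diagram $\mathcal{B}_0 \downarrow Y \to \mathcal{C}$ is $\kappa$-filtered with colimiting cocone $Y$; the closure hypothesis then transports this colimit into $\mathcal{B}$. To obtain $\kappa$-filteredness, start with a $\kappa$-small diagram in $\mathcal{B}_0 \downarrow Y$, regard it as a $\kappa$-small diagram in the (already $\kappa$-filtered) comma category $\Kompakt[\kappa]{\mathcal{C}} \downarrow Y$ provided by $\kappa$-accessibility of $\mathcal{C}$, pick a cocone $(A, A \to Y)$ there, and apply (ii) to the map $A \to Y$ to factor it through some $B \in \mathcal{B}_0$: this produces the desired cocone in $\mathcal{B}_0 \downarrow Y$.

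The same idea shows that the inclusion $\mathcal{B}_0 \downarrow Y \embedinto \Kompakt[\kappa]{\mathcal{C}} \downarrow Y$ is final, since (ii) makes each slice non-empty and the factorisation trick above makes it connected. Finality then transfers the canonical colimit from the target to the source, yielding $Y = \indlim_{\mathcal{B}_0 \downarrow Y} U$ in $\mathcal{C}$, hence in $\mathcal{B}$. Each $B \in \mathcal{B}_0$ is $\kappa$-presentable in $\mathcal{B}$ because $\kappa$-filtered colimits in $\mathcal{B}$ agree with those in $\mathcal{C}$ and $\Hom[\mathcal{C}]{B}{\blank}$ preserves them; combined with essential smallness of $\mathcal{B}_0$, this shows $\mathcal{B}$ is $\kappa$-accessible. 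Finally, any $B' \in \Kompakt[\kappa]{\mathcal{B}}$, being expressed as a $\kappa$-filtered colimit of objects of $\mathcal{B}_0$ via the above, is a retract of some $B \in \mathcal{B}_0$ by factoring $\id_{B'}$, and retracts of $\kappa$-presentable objects in $\mathcal{C}$ are $\kappa$-presentable in $\mathcal{C}$; so the inclusion is strongly $\kappa$-accessible.

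The main obstacle will be the two finality/$\kappa$-filteredness verifications in the middle step, since one has to check carefully that passing through $\Kompakt[\kappa]{\mathcal{C}} \downarrow Y$ and applying (ii) really produces cocones in $\mathcal{B}_0 \downarrow Y$ that respect the given data over $Y$. Everything else \dash the $\kappa$-presentability of objects of $\mathcal{B}_0$ in $\mathcal{B}$ and the retract argument at the end \dash is routine once the comma-category description of $Y$ is in place.
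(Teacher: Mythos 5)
Your proposal is correct and follows essentially the same route as the paper: the forward direction factors $f$ through a component of a $\kappa$-filtered colimit presentation of $Y$ by objects of $\mathcal{B} \cap \Kompakt[\kappa]{\mathcal{C}}$, and the converse shows that the full subcategory of $\commacat{\Kompakt[\kappa]{\mathcal{C}}}{Y}$ on objects lying in $\mathcal{B}$ is $\kappa$-filtered and cofinal, exactly as in the paper's proof. The only cosmetic difference is that you spell out the retract argument identifying $\Kompakt[\kappa]{\mathcal{B}}$ inside $\Kompakt[\kappa]{\mathcal{C}}$, which the paper delegates to \autoref{prop:strongly.accessible.subcategories} and \autoref{lem:small.objects}.
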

\begin{proof}
(i) \implies (ii). 
Let $f : X \to Y$ be a morphism in $\mathcal{C}$.
The hypothesis implies that $Y$ is a colimit in $\mathcal{C}$ of a small $\kappa$-filtered diagram in $\mathcal{B} \cap \Kompakt[\kappa]{\mathcal{C}}$; but $X$ is a $\kappa$-presentable object in $\mathcal{C}$, so $f : X \to Y$ must factor through some component of the colimiting cocone.

\medskip\noindent
(ii) \implies (i). 
In view of \autoref{lem:small.objects} and \autoref{prop:strongly.accessible.subcategories}, it suffices to show that every object in $\mathcal{B}$ is a colimit (in $\mathcal{C}$) of an essentially small $\kappa$-filtered diagram in $\mathcal{B} \cap \Kompakt[\kappa]{\mathcal{C}}$. 

Let $Y$ be an object in $\mathcal{B}$ and let $\mathcal{J}$ be the full subcategory of the slice category $\overcat{\mathcal{C}}{Y}$ spanned by the objects $\tuple{X, f}$ where $X$ is an object in $\mathcal{B}$ that is a $\kappa$-presentable object in $\mathcal{C}$.
Clearly, $\mathcal{J}$ is a full subcategory of $\commacat{\Kompakt[\kappa]{\mathcal{C}}}{Y}$.
On the other hand, the evident projection $U : \commacat{\Kompakt[\kappa]{\mathcal{C}}}{Y} \to \mathcal{C}$ is an essentially small $\kappa$-filtered diagram and the tautological cocone $U \hoto \Delta Y$ is a colimiting cocone.
\unskip\footnote{See Proposition~2.1.5 in \citep{Makkai-Pare:1989} or Proposition~2.8 in \citep{LPAC}.}
Moreover, the hypothesis implies that $\mathcal{J}$ is a $\kappa$-filtered category and a cofinal subcategory of $\commacat{\Kompakt[\kappa]{\mathcal{C}}}{Y}$.
Thus, $Y$ is also a colimit of the diagram obtained by restricting along the inclusion $\mathcal{J} \embedinto \commacat{\Kompakt[\kappa]{\mathcal{C}}}{Y}$.
This completes the proof.
\end{proof}

\begin{prop}
\label{prop:image.of.strongly.accessible.functors}
Let $F : \mathcal{C} \to \mathcal{D}$ be a strongly $\kappa$-accessible functor and let $\mathcal{D}'$ be the full subcategory of $\mathcal{D}$ spanned by the image of $F$.
\begin{enumerate}[(i)]
\item Every object in $\mathcal{D}'$ is a colimit in $\mathcal{D}$ of some small $\kappa$-filtered diagram consisting of objects in $\mathcal{D}'$ that are $\kappa$-presentable as objects in $\mathcal{D}$.

\item Every $\kappa$-presentable object in $\mathcal{D}'$ is also $\kappa$-presentable as an object in $\mathcal{D}$.

\item If $\mathcal{D}'$ is closed under colimits of small $\kappa$-filtered diagrams in $\mathcal{D}$, then $\mathcal{D}'$ is a $\kappa$-accessible subcategory of $\mathcal{D}$.
\end{enumerate}
\end{prop}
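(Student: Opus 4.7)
The plan is to handle the three parts in order: (i) is a direct transport along $F$ of a presentation in $\mathcal{C}$; (ii) is the classical retract argument; and (iii) assembles the two together with the closure hypothesis to verify the definition of a $\kappa$-accessible subcategory.

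For (i), any object of $\mathcal{D}'$ is of the form $FC$ for some $C$ in $\mathcal{C}$. Since $\mathcal{C}$ is $\kappa$-accessible, there exists a small $\kappa$-filtered diagram $A : \mathcal{J} \to \mathcal{C}$ with each $A j$ in $\Kompakt[\kappa]{\mathcal{C}}$ and $C \cong \indlim_\mathcal{J} A$. The two remaining clauses in the definition of `strongly $\kappa$-accessible' now do all the work: $F$ preserves the colimit, so $FC \cong \indlim_\mathcal{J} FA$, and $F$ sends $\kappa$-presentable objects to $\kappa$-presentable objects, so each $F(A j)$ is a $\kappa$-presentable object of $\mathcal{D}$ lying in $\mathcal{D}'$.

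For (ii), let $Y$ be a $\kappa$-presentable object of $\mathcal{D}'$. By (i), fix a small $\kappa$-filtered diagram $B : \mathcal{J} \to \mathcal{D}'$ with $\indlim_\mathcal{J} B \cong Y$ in $\mathcal{D}$ and each $B j$ in $\Kompakt[\kappa]{\mathcal{D}}$. Because $\mathcal{D}'$ is full and $Y \in \mathcal{D}'$, this colimit is also a colimit in $\mathcal{D}'$. So $\kappa$-presentability of $Y$ in $\mathcal{D}'$ gives an isomorphism $\Hom[\mathcal{D}']{Y}{Y} \cong \indlim_\mathcal{J} \Hom[\mathcal{D}']{Y}{B j}$, through which $\id_Y$ factors through some coprojection $B j \to Y$, exhibiting $Y$ as a retract of $B j$. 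Since $B j$ is $\kappa$-presentable in $\mathcal{D}$ and retracts are colimits of $\kappa$-small (indeed finite) diagrams, \autoref{lem:small.objects} implies that $Y$ is $\kappa$-presentable in $\mathcal{D}$.

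For (iii), suppose $\mathcal{D}'$ is closed in $\mathcal{D}$ under colimits of small $\kappa$-filtered diagrams. Then $\mathcal{D}'$ has such colimits, the inclusion $\mathcal{D}' \embedinto \mathcal{D}$ preserves them, and any object of $\mathcal{D}'$ which is $\kappa$-presentable in $\mathcal{D}$ is automatically $\kappa$-presentable in $\mathcal{D}'$ (hom-sets are computed in $\mathcal{D}$ by fullness, and colimits by the closure hypothesis). It remains to exhibit an essentially small generating family of $\kappa$-presentable objects for $\mathcal{D}'$. Let $\mathcal{G}$ be the full subcategory of $\mathcal{D}'$ spanned by $F(\Kompakt[\kappa]{\mathcal{C}})$; this is essentially small because $\Kompakt[\kappa]{\mathcal{C}}$ is, and its objects are $\kappa$-presentable in $\mathcal{D}$ (hence in $\mathcal{D}'$) because $F$ is strongly $\kappa$-accessible. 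The argument of (i) shows that every object of $\mathcal{D}'$ is a small $\kappa$-filtered colimit of objects of $\mathcal{G}$, so $\mathcal{D}'$ is a $\kappa$-accessible category and the inclusion $\mathcal{D}' \embedinto \mathcal{D}$ is a $\kappa$-accessible functor, as required.

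No step appears to present a genuine obstacle. The only point demanding care is the verification in (ii) that a $\kappa$-filtered colimit cocone in $\mathcal{D}$ with apex in $\mathcal{D}'$ and legs in $\mathcal{D}'$ really is a colimit cocone in the subcategory $\mathcal{D}'$, which is immediate from fullness.
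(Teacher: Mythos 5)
Your proposal is correct and follows essentially the same route as the paper: part (i) transports a canonical $\kappa$-filtered presentation along $F$ using both clauses of strong $\kappa$-accessibility, part (ii) is the retract argument applied to the presentation from (i), and part (iii) observes that fullness plus closure under $\kappa$-filtered colimits makes the objects of $F\parens{\Kompakt[\kappa]{\mathcal{C}}}$ into an essentially small family of $\kappa$-presentable generators for $\mathcal{D}'$. The only difference is that you spell out details the paper leaves implicit, such as why the colimit cocone from (i) is also colimiting in $\mathcal{D}'$ and why a retract of a $\kappa$-presentable object is $\kappa$-presentable.
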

\begin{proof}
(i).
Let $D$ be any object in $\mathcal{D}'$.
By definition, there is an object $C$ in $\mathcal{C}$ such that $D = F C$, and since $\mathcal{C}$ is a $\kappa$-accessible category, there is a small $\kappa$-filtered diagram $X : \mathcal{J} \to \mathcal{C}$ such that each $X j$ is a $\kappa$-presentable object in $\mathcal{C}$ and $C \cong \indlim_\mathcal{J} X$.
Since $F : \mathcal{C} \to \mathcal{D}$ is a strongly $\kappa$-accessible functor, each $F X j$ is a $\kappa$-presentable object in $\mathcal{D}$ and we have $D \cong \indlim_\mathcal{J} F X$.

\medskip\noindent
(ii).
Moreover, if $D$ is a $\kappa$-presentable object in $\mathcal{D}'$, then $D$ must be a retract of $F X j$ for some object $j$ in $\mathcal{J}$, and so $D$ is also $\kappa$-presentable as an object in $\mathcal{D}$.

\medskip\noindent
(iii).
Any object in $\mathcal{D}'$ that is $\kappa$-presentable as an object in $\mathcal{D}$ must be $\kappa$-presentable as an object in $\mathcal{D}'$, because $\mathcal{D}'$ is a full subcategory of $\mathcal{D}$ that is closed under colimits of small $\kappa$-filtered diagrams.
Thus, by (i), $\mathcal{D}'$ is a $\kappa$-accessible subcategory of $\mathcal{D}$.
\end{proof}

\begin{thm}[Accessibility of comma categories]
\label{thm:accessible.comma.categories}
Let $F : \mathcal{C} \to \mathcal{E}$ and $G : \mathcal{D} \to \mathcal{E}$ be $\kappa$-accessible functors.
\begin{enumerate}[(i)]
\item The comma category $\commacat{F}{G}$ has colimits of small $\kappa$-filtered diagrams, created by the projection functor $\commacat{F}{G} \to \mathcal{C} \times \mathcal{D}$.

\item Given an object $\tuple{C, D, e}$ in $\commacat{F}{G}$, if $C$ is a $\kappa$-presentable object in $\mathcal{C}$, $D$ is a $\kappa$-presentable object in $\mathcal{D}$, and $F C$ is a $\kappa$-presentable object in $\mathcal{E}$, then $\tuple{C, D, e}$ is a $\kappa$-presentable object in $\commacat{F}{G}$.

\item If both $F$ and $G$ are strongly $\kappa$-accessible functors, then $\commacat{F}{G}$ is a $\kappa$-\allowhyphens accessible category, and the projection functors $P : \commacat{F}{G} \to \mathcal{C}$ and $Q : \commacat{F}{G} \to \mathcal{D}$ are strongly $\kappa$-accessible.
\end{enumerate}
\end{thm}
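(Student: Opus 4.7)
For (i), given a small $\kappa$-filtered diagram $H : \mathcal{J} \to \commacat{F}{G}$ with $H \argp{j} = \tuple{C_j, D_j, e_j}$, the plan is to form $C = \indlim_\mathcal{J} C_\bullet$ in $\mathcal{C}$ and $D = \indlim_\mathcal{J} D_\bullet$ in $\mathcal{D}$; since $F$ and $G$ are $\kappa$-accessible, $F C \cong \indlim_\mathcal{J} F C_\bullet$ and $G D \cong \indlim_\mathcal{J} G D_\bullet$, so the family $\set{e_j}$ is a cocone under $F C_\bullet$ with vertex $G D$ and induces a unique $e : F C \to G D$. A routine verification shows that $\tuple{C, D, e}$, equipped with the evident structure maps, is the colimit of $H$ in $\commacat{F}{G}$ and that the projection to $\mathcal{C} \times \mathcal{D}$ creates it.

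For (ii), the key observation is that $\Hom[\commacat{F}{G}]{\tuple{C, D, e}}{\tuple{C', D', e'}}$ is naturally identified with the equaliser of the two maps
\[
\Hom[\mathcal{C}]{C}{C'} \times \Hom[\mathcal{D}]{D}{D'} \rightrightarrows \Hom[\mathcal{E}]{F C}{G D'}
\]
sending $\tuple{c, d}$ respectively to $e' \circ F c$ and to $G d \circ e$. Under the hypotheses on $\tuple{C, D, e}$, each of the three hom-functors on the right-hand side, viewed as a functor of $\tuple{C', D', e'}$, preserves colimits of small $\kappa$-filtered diagrams in the relevant argument \dash with (i) handling the third \dash and since finite limits commute with $\kappa$-filtered colimits in $\cat{\Set}$, $\tuple{C, D, e}$ is $\kappa$-presentable in $\commacat{F}{G}$.

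For (iii), let $\mathcal{K} \subseteq \commacat{F}{G}$ be the full subcategory of objects $\tuple{X, Y, \phi}$ with $X \in \Kompakt[\kappa]{\mathcal{C}}$ and $Y \in \Kompakt[\kappa]{\mathcal{D}}$; strong $\kappa$-accessibility of $F$ gives $F X \in \Kompakt[\kappa]{\mathcal{E}}$, so (ii) implies $\mathcal{K} \subseteq \Kompakt[\kappa]{\commacat{F}{G}}$, and $\mathcal{K}$ is essentially small. The remaining task is to show every $\tuple{C, D, e}$ is a colimit in $\commacat{F}{G}$ of a small $\kappa$-filtered diagram in $\mathcal{K}$: the plan is to take $\mathcal{S} = \commacat{\mathcal{K}}{\tuple{C, D, e}}$, prove that $\mathcal{S}$ is $\kappa$-filtered, and show that the evident projections $\mathcal{S} \to \commacat{\Kompakt[\kappa]{\mathcal{C}}}{C}$ and $\mathcal{S} \to \commacat{\Kompakt[\kappa]{\mathcal{D}}}{D}$ are cofinal; combined with the canonical colimit presentations of $C$ and $D$ over those slice categories and with (i), this identifies $\tuple{C, D, e}$ as the colimit of $\mathcal{S} \to \commacat{F}{G}$. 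The projections $P$ and $Q$ then preserve $\kappa$-filtered colimits by (i), and since every $\kappa$-presentable object of $\commacat{F}{G}$ is a retract of one in $\mathcal{K}$ they send $\kappa$-presentables to retracts of $\kappa$-presentables in $\mathcal{C}$ and $\mathcal{D}$, which are $\kappa$-presentable by \autoref{lem:small.objects}; hence $P$ and $Q$ are strongly $\kappa$-accessible.

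The main obstacle is $\kappa$-filteredness of $\mathcal{S}$. Given a $\kappa$-small family of objects $\tuple{X_\alpha, Y_\alpha, \phi_\alpha, c_\alpha, d_\alpha}$ of $\mathcal{S}$, the plan is: first use $\kappa$-filteredness of $\commacat{\Kompakt[\kappa]{\mathcal{C}}}{C}$ and $\commacat{\Kompakt[\kappa]{\mathcal{D}}}{D}$ to pick candidate upper bounds $\tuple{X, c}$ and $\tuple{Y, d}$; next, because $F X$ is $\kappa$-presentable in $\mathcal{E}$ and $G D$ is the $\kappa$-filtered colimit of $G Y'$ over $\commacat{\Kompakt[\kappa]{\mathcal{D}}}{D}$, factor the composite $e \circ F c$ as $G d' \circ \phi$ for some $\tuple{Y', d'}$ beyond $\tuple{Y, d}$; finally, push each of the $\kappa$-small many compatibility equations $\phi \circ F u_\alpha = G v_\alpha \circ \phi_\alpha$ further in $\commacat{\Kompakt[\kappa]{\mathcal{D}}}{D}$, using that they already hold after postcomposition with $G d'$ by construction and that $F X_\alpha$ is $\kappa$-presentable in $\mathcal{E}$. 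The parallel-pair condition is handled by the same pattern.
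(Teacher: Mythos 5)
Your argument is correct; the paper itself gives no proof here, simply citing Theorem 2.43 of \citep{LPAC}, and your proposal is essentially a faithful reconstruction of that standard argument (componentwise creation of filtered colimits, the equaliser description of hom-sets combined with finite limits commuting with $\kappa$-filtered colimits, and the canonical diagram over the subcategory of componentwise $\kappa$-presentable objects, with $\kappa$-presentability of $F X$ in $\mathcal{E}$ used to factor and then rectify the structure maps). The only parts left as a plan \dash cofinality of the two projections from $\mathcal{S}$ and the verification that the canonical cocone is colimiting \dash follow by the same factor-and-rectify pattern you already spell out for $\kappa$-filteredness, so there is no gap of substance.
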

\begin{proof} \openproof
See (the proof of) Theorem 2.43 in \citep{LPAC}.
\end{proof}

\begin{cor}
\label{cor:accessible.arrow.categories}
If $\mathcal{C}$ is a $\kappa$-accessible category, then so is the functor category $\Func{\mathbf{2}}{\mathcal{C}}$.
Moreover, the $\kappa$-presentable objects in $\Func{\mathbf{2}}{\mathcal{C}}$ are precisely the morphisms between $\kappa$-presentable objects in $\mathcal{C}$.
\end{cor}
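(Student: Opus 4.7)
The plan is to observe that the arrow category $\Func{\mathbf{2}}{\mathcal{C}}$ is, by definition, the comma category $\commacat{\id_\mathcal{C}}{\id_\mathcal{C}}$, and then to invoke \autoref{thm:accessible.comma.categories} with $F = G = \id_\mathcal{C}$. Since $\mathcal{C}$ is $\kappa$-accessible, the identity functor $\id_\mathcal{C} : \mathcal{C} \to \mathcal{C}$ is trivially strongly $\kappa$-accessible, so part (iii) of the comma category theorem immediately yields that $\Func{\mathbf{2}}{\mathcal{C}}$ is a $\kappa$-accessible category and that the domain and codomain projections $P, Q : \Func{\mathbf{2}}{\mathcal{C}} \to \mathcal{C}$ are strongly $\kappa$-accessible.

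For the characterisation of the $\kappa$-presentable objects, one direction is a direct instance of part (ii) of the theorem: given a morphism $f : X \to Y$ in $\mathcal{C}$ with both $X$ and $Y$ being $\kappa$-presentable in $\mathcal{C}$, we have trivially that $F X = X$ is a $\kappa$-presentable object in $\mathcal{C}$ as well, so $\tuple{X, Y, f}$ is a $\kappa$-presentable object in $\commacat{\id_\mathcal{C}}{\id_\mathcal{C}} = \Func{\mathbf{2}}{\mathcal{C}}$. Conversely, if $f : X \to Y$ is a $\kappa$-presentable object in $\Func{\mathbf{2}}{\mathcal{C}}$, then since strongly $\kappa$-accessible functors preserve $\kappa$-presentability by definition, both $X = P \tuple{X, Y, f}$ and $Y = Q \tuple{X, Y, f}$ are $\kappa$-presentable in $\mathcal{C}$.

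There is no substantive obstacle: the whole statement is really a repackaging of the preceding theorem in the special case where both functors feeding the comma construction are identities, so the main work has already been done.
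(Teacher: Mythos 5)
Your proposal is correct and takes essentially the same route as the paper, which likewise identifies $\Func{\mathbf{2}}{\mathcal{C}}$ with the comma category $\commacat{\mathcal{C}}{\mathcal{C}}$ over the identity functor and cites \autoref{thm:accessible.comma.categories}. You merely spell out the two directions of the presentability claim (parts (ii) and (iii) of that theorem) that the paper leaves implicit.
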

\begin{proof}
The functor category $\Func{\mathbf{2}}{\mathcal{C}}$ is isomorphic to the comma category $\commacat{\mathcal{C}}{\mathcal{C}}$, and $\id : \mathcal{C} \to \mathcal{C}$ is certainly a strongly $\kappa$-accessible functor, so this is a special case of \autoref{thm:accessible.comma.categories}.
\end{proof}

\begin{thm}[Accessibility of inverters]
\label{thm:accessible.inverters}
Let $R, S : \mathcal{B} \to \mathcal{E}$ be $\kappa$-accessible functors, let $\phi : R \hoto S$ be a natural transformation, and let $\mathcal{B}'$ be the full subcategory of $\mathcal{B}$ spanned by those objects $B$ in $\mathcal{B}$ such that $\phi_B : R B \to S B$ is an isomorphism in $\mathcal{E}$.
\begin{enumerate}[(i)]
\item $\mathcal{B}'$ is closed in $\mathcal{B}$ under colimits of small $\kappa$-filtered diagrams.

\item If both $R$ and $S$ are strongly $\lambda$-accessible functors and $\kappa < \lambda$, then the inclusion $\mathcal{B}' \embedinto \mathcal{B}$ is strongly $\lambda$-accessible.
\end{enumerate}
\end{thm}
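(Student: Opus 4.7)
The plan is to prove (i) directly from the $\kappa$-accessibility of $R$ and $S$, and then to deduce (ii) by applying \autoref{lem:strongly.accessible.subcategory.criteria} at $\lambda$, with the crucial factorisation criterion supplied by the back-and-forth \autoref{lem:back-and-forth.approximation.of.isomorphisms}.

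For (i), take any small $\kappa$-filtered diagram $B : \mathcal{D} \to \mathcal{B}'$ and form its colimit $B^* = \indlim_\mathcal{D} B$ in $\mathcal{B}$. Since $R$ and $S$ preserve colimits of small $\kappa$-filtered diagrams, the component $\phi_{B^*}$ is identified with $\indlim_\mathcal{D} \phi_{B_\bullet} : \indlim R B_\bullet \to \indlim S B_\bullet$. Each $\phi_{B_d}$ is invertible, and its pointwise inverses assemble into a natural transformation in the opposite direction whose colimit is a two-sided inverse for $\phi_{B^*}$. Hence $B^* \in \mathcal{B}'$.

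For (ii), note that $\mathcal{B}$ is $\lambda$-accessible, being the domain of the strongly $\lambda$-accessible functor $R$, and that $\mathcal{B}'$ is closed under small $\lambda$-filtered colimits in $\mathcal{B}$ by (i), since every $\lambda$-filtered diagram is $\kappa$-filtered. By \autoref{lem:strongly.accessible.subcategory.criteria} (with $\lambda$ in place of $\kappa$), it then suffices to show that every morphism $f : X \to Y$ in $\mathcal{B}$ with $X$ a $\lambda$-presentable object and $Y \in \mathcal{B}'$ factors through some $\lambda$-presentable object of $\mathcal{B}$ lying in $\mathcal{B}'$. Choose a small $\lambda$-filtered diagram $Z : \mathcal{J} \to \mathcal{B}$ with $\lambda$-presentable vertices and colimit $Y$. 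The $\lambda$-presentability of $X$ yields an object $j_0$ of $\mathcal{J}$ through which $f$ factors. The strong $\lambda$-accessibility of $R$ and $S$ ensures that $RZ_\bullet, SZ_\bullet : \mathcal{J} \to \mathcal{E}$ are $\lambda$-filtered diagrams of $\lambda$-presentable objects with colimits $RY$ and $SY$, and $\phi_Y$ is recovered as $\indlim_\mathcal{J} \phi_{Z_\bullet}$. Since $\phi_Y$ is an isomorphism, \autoref{lem:back-and-forth.approximation.of.isomorphisms}, applied in $\mathcal{E}$ with parameters $\kappa \le \lambda$ and starting object $j_0$, supplies a chain $J : \kappa \to \mathcal{J}$ with $J(0) = j_0$ along which the colimit $\indlim_{\gamma < \kappa} \phi_{Z_{J(\gamma)}}$ is already an isomorphism in $\mathcal{E}$.

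Set $Y' = \indlim_{\gamma < \kappa} Z_{J(\gamma)}$ in $\mathcal{B}$. Since $\kappa < \lambda$, the $\kappa$-chain is a $\lambda$-small diagram of $\lambda$-presentable objects, so \autoref{lem:small.objects} implies that $Y'$ is $\lambda$-presentable in $\mathcal{B}$; and because $R$ and $S$ both preserve this $\kappa$-filtered colimit, $\phi_{Y'}$ is precisely the isomorphism furnished by the back-and-forth lemma, so $Y' \in \mathcal{B}'$. The induced chain of canonical maps $X \to Z_{J(0)} \to Y' \to Y$ is the required factorisation. The main technical subtlety is the passage from the $\lambda$-filtered presentation of $Y$ down to a $\kappa$-indexed chain that simultaneously witnesses the isomorphism $\phi_{Y'}$ while keeping $Y'$ within the $\lambda$-presentable objects; this is exactly where the inequality $\kappa < \lambda$ and the back-and-forth construction work in tandem.
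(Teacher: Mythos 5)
Your proof is correct and follows essentially the same route as the paper: part (i) is the pointwise-inverse argument the paper dismisses as straightforward, and part (ii) combines \autoref{lem:strongly.accessible.subcategory.criteria} at level $\lambda$ with \autoref{lem:back-and-forth.approximation.of.isomorphisms} and \autoref{lem:small.objects}, exactly as in the paper's proof.
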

\begin{proof}
(i). Straightforward.

\medskip\noindent
(ii). By \autoref{lem:strongly.accessible.subcategory.criteria}, it suffices to verify that, for every morphism $f : B \to B'$ in $\mathcal{B}$, if $B$ is a $\lambda$-presentable object in $\mathcal{B}$ and $B'$ is in $\mathcal{B}'$, then $f : B \to B'$ factors through some $\lambda$-presentable object in $\mathcal{B}$ that is also in $\mathcal{B}'$.

Since $\mathcal{B}$ is a $\lambda$-accessible category, we may choose a small $\lambda$-filtered diagram $X : \mathcal{I} \to \mathcal{B}$ such that each $X i$ is a $\lambda$-presentable object in $\mathcal{B}$ and $\indlim_\mathcal{I} X \cong B'$.
Since $B$ is a $\lambda$-presentable object in $\mathcal{B}$, there is an object $i_0$ in $\mathcal{I}$ such that $f : B \to B'$ factors as a morphism $B \to X i_0$ in $\mathcal{B}$ followed by the colimiting cocone component $X i_0 \to B'$.
Then, by \autoref{lem:back-and-forth.approximation.of.isomorphisms}, there is a chain $I : \kappa \to \mathcal{I}$ such that $I \argp{0} = i_0$ and $\hat{B} = \indlim_{\gamma < \kappa} X I \argp{\gamma}$ is in $\mathcal{B}'$.
Moreover, since $\kappa < \lambda$, $\hat{B}$ is a $\lambda$-presentable object in $\mathcal{B}$ (by \autoref{lem:small.objects}).
We have thus obtained the required factorisation of $f : B \to B'$.
\end{proof}

The next theorem is a variation on Proposition 3.1 in \citep{Chorny-Rosicky:2012} and appears as the ``pseudopullback theorem'' in \citep{Raptis-Rosicky:2015}.
Recall that the \strong{iso-comma category} $\isocommacat{F}{G}$ for functors $F : \mathcal{C} \to \mathcal{E}$ and $G : \mathcal{D} \to \mathcal{E}$ is the full subcategory of the comma category $\commacat{F}{G}$ spanned by those objects $\tuple{C, D, e}$ where $e : F C \to G D$ is an isomorphism in $\mathcal{E}$.

\begin{thm}[Accessibility of iso-comma categories]
\label{thm:accessible.iso-comma.categories}
Let $\mathcal{C}$, $\mathcal{D}$, and $\mathcal{E}$ be categories with colimits of small $\kappa$-filtered diagrams, and let $F : \mathcal{C} \to \mathcal{E}$ and $G : \mathcal{D} \to \mathcal{E}$ be be functors that preserve colimits of small $\kappa$-filtered diagrams.
\begin{enumerate}[(i)]
\item The iso-comma category $\isocommacat{F}{G}$ has colimits of small $\kappa$-filtered diagrams, created by the projection functor $\isocommacat{F}{G} \to \mathcal{C} \times \mathcal{D}$.

\item Given an object $\tuple{C, D, e}$ in $\isocommacat{F}{G}$, if $C$ is a $\lambda$-presentable object in $\mathcal{C}$, $D$ is a $\lambda$-presentable object in $\mathcal{D}$, and $F C$ is a $\lambda$-presentable object in $\mathcal{E}$, then $\tuple{C, D, e}$ is a $\lambda$-presentable object in $\isocommacat{F}{G}$.

\item If $F$ and $G$ are strongly $\lambda$-accessible functors and $\kappa < \lambda$, then the inclusion $\isocommacat{F}{G}$ is a $\lambda$-accessible category, and the projection functors $P : \isocommacat{F}{G} \to \mathcal{C}$ and $Q : \isocommacat{F}{G} \to \mathcal{D}$ are strongly $\lambda$-accessible.
\end{enumerate}
\end{thm}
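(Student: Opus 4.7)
The plan is to parallel the proof of \autoref{thm:accessible.comma.categories}, with the essential new ingredient being \autoref{lem:back-and-forth.approximation.of.isomorphisms} for handling the iso-invariance condition that distinguishes the iso-comma from the comma.
For part~(i), $\commacat{F}{G}$ already has $\kappa$-filtered colimits created by the projection to $\mathcal{C} \times \mathcal{D}$ (this is automatic from the preservation hypotheses on $F$ and $G$, with no accessibility needed).
Given a $\kappa$-filtered diagram $\tuple{X, Y, f} : \mathcal{I} \to \isocommacat{F}{G}$, its colimit in $\commacat{F}{G}$ has transition morphism $\indlim_\mathcal{I} f_i : F \indlim_\mathcal{I} X \to G \indlim_\mathcal{I} Y$, which is a $\kappa$-filtered colimit of isomorphisms in $\mathcal{E}$ and hence itself an isomorphism; thus the colimit lies in $\isocommacat{F}{G}$.

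For part~(ii) I would mimic the argument for comma categories.
A morphism from $\tuple{C, D, e}$ to a $\lambda$-filtered colimit $\tuple{X, Y, f} = \indlim_\mathcal{J} \tuple{X_j, Y_j, f_j}$ in $\isocommacat{F}{G}$ (whose formation, for $\lambda \ge \kappa$, reduces by part~(i) to the colimit in $\commacat{F}{G}$) consists of morphisms $g : C \to X$ and $h : D \to Y$ satisfying $G h \circ e = f \circ F g$.
Since $C$ and $D$ are $\lambda$-presentable, $g$ and $h$ factor through a common stage $j_0$, giving $g_0 : C \to X_{j_0}$ and $h_0 : D \to Y_{j_0}$; the square $G h_0 \circ e = f_{j_0} \circ F g_0$ may not yet commute, but both sides agree after composition with $G Y_{j_0} \to G Y$, and since $F C$ is $\lambda$-presentable in $\mathcal{E}$ they are already equal at some $j_1 \ge j_0$.
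The same trick handles uniqueness of factorisations.

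For part~(iii), \autoref{thm:accessible.comma.categories} (applied with $\kappa$ replaced by $\lambda$, using the strong $\lambda$-accessibility of $F$ and $G$) makes $\commacat{F}{G}$ a $\lambda$-accessible category with strongly $\lambda$-accessible projections $P$, $Q$, and part~(i) ensures $\isocommacat{F}{G}$ is closed in $\commacat{F}{G}$ under $\lambda$-filtered colimits.
By \autoref{lem:strongly.accessible.subcategory.criteria} it then suffices to prove that the inclusion $\isocommacat{F}{G} \embedinto \commacat{F}{G}$ is strongly $\lambda$-accessible; combined with \autoref{prop:strongly.accessible.subcategories} this gives $\isocommacat{F}{G}$ its $\lambda$-accessible structure and makes the restrictions of $P$ and $Q$ strongly $\lambda$-accessible by composition.

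The main obstacle is verifying the factorisation criterion of \autoref{lem:strongly.accessible.subcategory.criteria}, and here \autoref{lem:back-and-forth.approximation.of.isomorphisms} is essential.
Given a morphism $\tuple{C, D, e} \to \tuple{C', D', e'}$ in $\commacat{F}{G}$ with $\tuple{C, D, e}$ a $\lambda$-presentable object and $\tuple{C', D', e'}$ in $\isocommacat{F}{G}$, I would write $\tuple{C', D', e'} = \indlim_\mathcal{I} \tuple{X_i, Y_i, f_i}$ as a $\lambda$-filtered colimit of $\lambda$-presentable objects of the comma category (so that $X_i$, $Y_i$, and $F X_i$ are each $\lambda$-presentable, by strong $\lambda$-accessibility of $P$, $Q$, and $F$), observe that the given morphism factors through some $\tuple{X_{i_0}, Y_{i_0}, f_{i_0}}$, and apply \autoref{lem:back-and-forth.approximation.of.isomorphisms} in $\mathcal{E}$ to the natural transformation whose component at $i$ is $f_i : F X_i \to G Y_i$: since $\indlim_\mathcal{I} f_i = e'$ is an isomorphism, this yields a chain $I : \kappa \to \mathcal{I}$ with $I \argp{0} = i_0$ such that $\indlim_{\gamma < \kappa} f_{I \argp{\gamma}}$ is an isomorphism.
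The object $\tuple{\indlim_\gamma X I \argp{\gamma}, \indlim_\gamma Y I \argp{\gamma}, \indlim_\gamma f_{I \argp{\gamma}}}$ then lies in $\isocommacat{F}{G}$, is $\lambda$-presentable in $\commacat{F}{G}$ by \autoref{lem:small.objects} (using $\kappa < \lambda$), and the original morphism factors through it via the cocone component at $i_0$, completing the verification.
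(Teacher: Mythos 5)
Your proof is correct and follows essentially the same route as the paper: the paper obtains part~(iii) by citing \autoref{thm:accessible.inverters} (the iso-comma category being the inverter of the evident natural transformation on $\commacat{F}{G}$), and the proof of that theorem is precisely the combination of \autoref{lem:strongly.accessible.subcategory.criteria} and \autoref{lem:back-and-forth.approximation.of.isomorphisms} that you spell out, applied to the components $f_i : F X_i \to G Y_i$. Parts~(i) and~(ii) likewise match the paper's (terser) argument, which for~(ii) simply invokes \autoref{prop:strongly.accessible.subcategories} together with \autoref{thm:accessible.comma.categories} in place of your direct verification.
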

\begin{proof}
(i).
This is a straightforward consequence of the hypothesis that both $F : \mathcal{C} \to \mathcal{E}$ and $G : \mathcal{D} \to \mathcal{E}$ preserve colimits of small $\kappa$-filtered diagrams.

\medskip\noindent
(ii).
Apply \autoref{prop:strongly.accessible.subcategories} and \autoref{thm:accessible.comma.categories}.

\medskip\noindent
(iii).
By \autoref{thm:accessible.inverters}, the inclusion $\isocommacat{F}{G} \embedinto \commacat{F}{G}$ is a strongly $\lambda$-\allowhyphens accessible functor.
Since the class of strongly $\lambda$-accessible functors is closed under composition, 
it follows that the projections $P : \isocommacat{F}{G} \to \mathcal{C}$ and $Q : \isocommacat{F}{G} \to \mathcal{D}$ are also strongly $\lambda$-accessible.
\end{proof}

\begin{prop}
\label{prop:preimage.of.accessible.replete.full.subcategories}
Let $\mathcal{C}$ and $\mathcal{E}$ be categories with colimits of small $\kappa$-filtered diagrams, let $\mathcal{D}$ be a replete and full subcategory of $\mathcal{E}$ that is closed under colimits of small $\kappa$-filtered diagrams, let $F : \mathcal{C} \to \mathcal{E}$ be a functor that preserves colimits of small $\kappa$-filtered diagrams, and let $\mathcal{B}$ be the preimage of $\mathcal{D}$ under $F$, so that we have the following strict pullback diagram:
\[
\begin{tikzcd}
\mathcal{B} \dar[hookrightarrow] \rar &
\mathcal{D} \dar[hookrightarrow] \\
\mathcal{C} \rar[swap]{F} &
\mathcal{E}
\end{tikzcd}
\]
\begin{enumerate}[(i)]
\item $\mathcal{B}$ is a replete and full subcategory of $\mathcal{D}$ and is closed under colimits of small $\kappa$-filtered diagrams in $\mathcal{D}$.

\item If $F : \mathcal{C} \to \mathcal{E}$ and the inclusion $\mathcal{D} \embedinto \mathcal{E}$ are strongly $\lambda$-accessible functors and $\kappa < \lambda$, then $\mathcal{B}$ is a $\lambda$-accessible subcategory of $\mathcal{C}$, and moreover, the inclusion $\mathcal{B} \hookrightarrow \mathcal{C}$ is also strongly $\lambda$-accessible.
\end{enumerate}
\end{prop}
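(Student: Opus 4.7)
The plan is to recognise $\mathcal{B}$ as (equivalent to) the iso-comma category $\isocommacat{F}{J}$ with $J : \mathcal{D} \embedinto \mathcal{E}$ the inclusion, and then invoke \autoref{thm:accessible.iso-comma.categories}.

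For (i), since $\mathcal{D}$ is a replete full subcategory of $\mathcal{E}$, membership of $C$ in $\mathcal{B}$ depends only on the isomorphism class of $F C$, so $\mathcal{B}$ is replete and full in $\mathcal{C}$. If $X : \mathcal{I} \to \mathcal{B}$ is a small $\kappa$-filtered diagram and $C = \indlim_\mathcal{I} X$ is its colimit in $\mathcal{C}$, then $F C \cong \indlim_\mathcal{I} F X$ because $F$ preserves these colimits; the latter colimit lies in $\mathcal{D}$ by the hypothesis that $\mathcal{D}$ is closed under them in $\mathcal{E}$, so $C$ is in $\mathcal{B}$.

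For (ii), consider the projection $P : \isocommacat{F}{J} \to \mathcal{C}$. For any object $\tuple{C, D, e}$ in $\isocommacat{F}{J}$, the isomorphism $e : F C \to J D$ together with repleteness of $\mathcal{D}$ forces $F C$ to lie in $\mathcal{D}$; hence $P$ factors through $\mathcal{B} \embedinto \mathcal{C}$. Conversely, any $C$ in $\mathcal{B}$ yields $\tuple{C, F C, \id_{F C}}$ in $\isocommacat{F}{J}$, so the factored functor $\isocommacat{F}{J} \to \mathcal{B}$ is essentially surjective. It is also fully faithful: given objects $\tuple{C_i, D_i, e_i}$, a morphism in $\isocommacat{F}{J}$ is a pair $\tuple{f : C_1 \to C_2, g : D_1 \to D_2}$ with $J g \circ e_1 = e_2 \circ F f$, and since $J$ is fully faithful (as $\mathcal{D}$ is full in $\mathcal{E}$) and $e_1, e_2$ are invertible, $g$ is uniquely determined by $f$ and always exists. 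Thus $\isocommacat{F}{J} \to \mathcal{B}$ is an equivalence, and the inclusion $\mathcal{B} \embedinto \mathcal{C}$ is (up to equivalence) the projection $P$.

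Now applying \autoref{thm:accessible.iso-comma.categories} to $F : \mathcal{C} \to \mathcal{E}$ and $J : \mathcal{D} \embedinto \mathcal{E}$ \dash both strongly $\lambda$-accessible by assumption, with $\kappa < \lambda$ \dash yields that $\isocommacat{F}{J}$ is a $\lambda$-accessible category and that $P$ is strongly $\lambda$-accessible. Transporting along the equivalence, $\mathcal{B}$ is $\lambda$-accessible and its inclusion into $\mathcal{C}$ is strongly $\lambda$-accessible, as required. The only real subtlety is verifying the equivalence $\mathcal{B} \simeq \isocommacat{F}{J}$, which rests precisely on $\mathcal{D}$ being a replete \emph{full} subcategory of $\mathcal{E}$; once this identification is in hand, the result is an immediate consequence of previously established material.
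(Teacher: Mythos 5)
Your proof is correct and follows essentially the same route as the paper: both identify $\mathcal{B}$ with the iso-comma category $\isocommacat{F}{\mathcal{D}}$ via an equivalence that hinges on $\mathcal{D}$ being replete and full, and then invoke \autoref{thm:accessible.iso-comma.categories} to transport $\lambda$-accessibility of the projection to the inclusion $\mathcal{B} \embedinto \mathcal{C}$. The only difference is cosmetic (you construct the functor $\isocommacat{F}{\mathcal{D}} \to \mathcal{B}$ where the paper constructs the comparison $K : \mathcal{B} \to \isocommacat{F}{\mathcal{D}}$), and you additionally spell out part (i), which the paper leaves as straightforward.
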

\begin{proof}
(i).
Straightforward.

\medskip\noindent
(ii).
Consider the iso-comma category $\isocommacat{F}{\mathcal{D}}$ and the induced comparison functor $K : \mathcal{B} \to \isocommacat{F}{\mathcal{D}}$.
It is clear that $K$ is fully faithful; but since $\mathcal{D}$ is a replete subcategory of $\mathcal{C}$, for every object $\tuple{C, D, e}$ in $\isocommacat{F}{\mathcal{D}}$, there is a canonical isomorphism $K C \to \tuple{C, D, e}$, namely the one corresponding to the following commutative diagram in $\mathcal{E}$:
\[
\begin{tikzcd}
F C \dar[swap]{\id} \rar{\id} &
F C \dar{e} \\
F C \rar[swap]{e} &
D
\end{tikzcd}
\]
Thus, $K : \mathcal{B} \to \isocommacat{F}{\mathcal{D}}$ is (half of) an equivalence of categories.
\Autoref{thm:accessible.iso-comma.categories} says the projection $P : \isocommacat{F}{\mathcal{D}} \to \mathcal{C}$ is a strongly $\lambda$-accessible functor, so we may deduce that the same is true for the inclusion $\mathcal{B} \embedinto \mathcal{C}$.
\end{proof}

%
%
%
\begin{lem}
\label{lem:strong.accessibility.and.monads}
Let $\mathcal{C}$ be a locally $\kappa$-presentable category and let $\mathbb{T} = \tuple{T, \eta, \mu}$ be a monad on $\mathcal{C}$.
If the forgetful functor $U : \mathcal{C}^\mathbb{T} \to \mathcal{C}$ is strongly $\kappa$-accessible, then so is the functor $T : \mathcal{C} \to \mathcal{C}$.
\end{lem}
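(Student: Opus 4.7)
The plan is to exploit the fact that $T$ factors as $T = U F$, where $F : \mathcal{C} \to \mathcal{C}^{\mathbb{T}}$ is the free-algebra functor left adjoint to $U$. Since we are given that $U$ is strongly $\kappa$-accessible, the category $\mathcal{C}^{\mathbb{T}}$ is $\kappa$-accessible, $U$ preserves colimits of small $\kappa$-filtered diagrams, and $U$ sends $\kappa$-presentable objects to $\kappa$-presentable objects. On the other hand, $\mathcal{C}$ is already $\kappa$-accessible by hypothesis, so to verify strong $\kappa$-accessibility of $T : \mathcal{C} \to \mathcal{C}$ we only need to check preservation of small $\kappa$-filtered colimits and preservation of $\kappa$-presentability.

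Preservation of colimits of small $\kappa$-filtered diagrams is immediate: $F$ is a left adjoint, so preserves all colimits, and $U$ preserves $\kappa$-filtered colimits by assumption, so the composite $T = U F$ does as well.

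For preservation of $\kappa$-presentability, I would first show that $F$ sends $\kappa$-presentable objects in $\mathcal{C}$ to $\kappa$-presentable objects in $\mathcal{C}^{\mathbb{T}}$. This is the standard adjoint argument: given a $\kappa$-presentable object $A$ in $\mathcal{C}$ and a small $\kappa$-filtered diagram $X : \mathcal{J} \to \mathcal{C}^{\mathbb{T}}$, the adjunction $F \dashv U$ together with the fact that $U$ preserves the colimit of $X$ and that $A$ is $\kappa$-presentable in $\mathcal{C}$ gives
\[
\Hom[\mathcal{C}^{\mathbb{T}}]{F A}{\indlim\nolimits_\mathcal{J} X} \cong \Hom[\mathcal{C}]{A}{\indlim\nolimits_\mathcal{J} U X} \cong \indlim\nolimits_\mathcal{J} \Hom[\mathcal{C}]{A}{U X} \cong \indlim\nolimits_\mathcal{J} \Hom[\mathcal{C}^{\mathbb{T}}]{F A}{X},
\]
so $F A$ is $\kappa$-presentable in $\mathcal{C}^{\mathbb{T}}$. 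Now applying the strong $\kappa$-accessibility of $U$ to $F A$ shows $T A = U F A$ is $\kappa$-presentable in $\mathcal{C}$, as needed.

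There is no real obstacle here: the argument is essentially formal, combining the universal property of $\mathcal{C}^{\mathbb{T}}$ (which automatically supplies the left adjoint $F$) with the two halves of the strong $\kappa$-accessibility hypothesis on $U$. The only subtlety is noting that the hypothesis that $U$ preserves small $\kappa$-filtered colimits is exactly what powers the adjoint transfer of $\kappa$-presentability from $A$ to $FA$; without that, we would not be able to conclude that $FA$ (and hence $TA$) is $\kappa$-presentable.
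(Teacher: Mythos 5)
Your proof is correct and follows essentially the same route as the paper: factor $T = U F$, note that $F$ preserves all colimits as a left adjoint, and transfer $\kappa$-presentability across the adjunction $F \dashv U$ using the hypothesis that $U$ preserves small $\kappa$-filtered colimits. The only difference is that the paper outsources the adjoint-transfer step to \autoref{prop:left.adjoints.and.strong.accessibility}, whereas you inline its (identical) proof.
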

\begin{proof}
\Autoref{prop:left.adjoints.and.strong.accessibility} says the free $\mathbb{T}$-algebra functor $F : \mathcal{C} \to \mathcal{C}^\mathbb{T}$ is strongly $\kappa$-accessible if the forgetful functor $U : \mathcal{C}^\mathbb{T} \to \mathcal{C}$ is $\kappa$-accessible; but $T = U F$, so $T$ is strongly $\kappa$-accessible when $U$ is.
\end{proof}

The following appears as part of Proposition~4.13 in \citep{Ulmer:1977}.

\begin{thm}[The category of algebras for a strongly accessible monad]
\label{thm:strongly.accessible.monads}
Let $\mathcal{C}$ be a locally $\lambda$-presentable category, let $\mathbb{T} = \tuple{T, \eta, \mu}$ be a monad on $\mathcal{C}$ where $T : \mathcal{C} \to \mathcal{C}$ preserves colimits of small $\kappa$-filtered diagrams, and let $\mathcal{C}^\mathbb{T}$ be the category of algebras for $\mathbb{T}$.
If $T : \mathcal{C} \to \mathcal{C}$ is a strongly $\lambda$-accessible functor and $\kappa < \lambda$, then:
\begin{enumerate}[(i)]
\item Given a coequaliser diagram in $\mathcal{C}^\mathbb{T}$ of the form below,
\[
\begin{tikzcd}
\tuple{A, \alpha}
\rar[transform canvas={yshift=0.75ex}]
\rar[transform canvas={yshift=-0.75ex}] &
\tuple{B, \beta}
\rar &
\tuple{C, \gamma}
\end{tikzcd}
\]
if $A$ and $B$ are $\lambda$-presentable objects in $\mathcal{C}$, then so is $C$.

\item Given a $\lambda$-small family $\seq{\tuple{A_i, \alpha_i}}{i \in I}$ of $\mathbb{T}$-algebras, if each $A_i$ is a $\lambda$-presentable object in $\mathcal{C}$, then so is the underlying object of the $\mathbb{T}$-algebra coproduct $\sum_{i \in I} \tuple{A_i, \alpha_i}$.

\item The forgetful functor $U : \mathcal{C}^\mathbb{T} \to \mathcal{C}$ is strongly $\lambda$-accessible.
\end{enumerate}
\end{thm}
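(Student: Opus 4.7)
My plan is to prove (iii) by realising $\mathcal{C}^\mathbb{T}$ as a strongly $\lambda$-accessible subcategory of the comma category $\mathcal{B} = \commacat{T}{\id_\mathcal{C}}$, and then to derive (i) and (ii) from the resulting characterisation of the $\lambda$-presentable objects in $\mathcal{C}^\mathbb{T}$.

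Because $T$ and $\id_\mathcal{C}$ are strongly $\lambda$-accessible, \autoref{thm:accessible.comma.categories} tells us that $\mathcal{B}$ is $\lambda$-accessible, that $\tuple{X, \xi}$ is $\lambda$-presentable in $\mathcal{B}$ exactly when $X$ is $\lambda$-presentable in $\mathcal{C}$, and that the projection $P : \mathcal{B} \to \mathcal{C}$, $P \tuple{X, \xi} = X$, is strongly $\lambda$-accessible. The subcategory $\mathcal{C}^\mathbb{T} \embedinto \mathcal{B}$ is cut out by the unit axiom $\xi \circ \eta_X = \id_X$ and the associativity axiom $\xi \circ T\xi = \xi \circ \mu_X$. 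Each such equation is of the form ``$\phi_1 = \phi_2$'' for a pair of natural transformations $\phi_1, \phi_2 : R \hoto S$ between strongly $\lambda$-accessible functors $R, S : \mathcal{B} \to \mathcal{C}$; the full subcategory of $\mathcal{B}$ on which such an equation holds is the inverter of the canonical natural transformation $\iota : E \hoto R$, where $E$ is the pointwise equaliser of $\phi_1$ and $\phi_2$. The functor $E$ is itself strongly $\lambda$-accessible, since equalisers are finite limits in the locally $\lambda$-presentable category $\mathcal{C}$ (hence commute with $\lambda$-filtered colimits) and the equaliser of a pair of morphisms between $\lambda$-presentable objects is $\lambda$-presentable by \autoref{lem:small.objects}. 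Two applications of \autoref{thm:accessible.inverters}, one for each axiom, using the hypothesis $\kappa < \lambda$, then show that $\mathcal{C}^\mathbb{T} \embedinto \mathcal{B}$ is strongly $\lambda$-accessible; composing with $P$ gives (iii).

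Combining (iii) with the description of $\Kompakt[\lambda]{\mathcal{B}}$, we deduce that a $\mathbb{T}$-algebra is $\lambda$-presentable in $\mathcal{C}^\mathbb{T}$ exactly when its underlying object is $\lambda$-presentable in $\mathcal{C}$. Parts (i) and (ii) now follow at once: in each case the colimit in question is a $\lambda$-small colimit in $\mathcal{C}^\mathbb{T}$ of $\lambda$-presentable algebras, hence $\lambda$-presentable in $\mathcal{C}^\mathbb{T}$ by \autoref{lem:small.objects}, and therefore has $\lambda$-presentable underlying object in $\mathcal{C}$ by (iii). The main obstacle is the packaging of each algebra axiom as an inverter to which \autoref{thm:accessible.inverters} applies; once the equaliser functor $E$ is set up, the remainder of the argument is routine bookkeeping.
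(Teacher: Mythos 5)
Your reduction of (i) and (ii) to (iii) would work if (iii) were established, but the route you take to (iii) has a genuine gap at the point where you claim the pointwise-equaliser functor $E$ is strongly $\lambda$-accessible. You justify this by saying that the equaliser of a pair of morphisms between $\lambda$-presentable objects is $\lambda$-presentable ``by \autoref{lem:small.objects}'', but that lemma concerns \emph{colimits} of $\kappa$-small diagrams of presentable objects; it says nothing about limits. In a locally $\lambda$-presentable category, $\Kompakt[\lambda]{\mathcal{C}}$ is closed under $\lambda$-small colimits but \emph{not}, in general, under equalisers or other finite limits\dash this is exactly why the paper must impose ``$\Kompakt[\lambda]{\mathcal{M}}$ is closed under finite limits in $\mathcal{M}$'' as a separate hypothesis in the definition of a strongly $\tuple{\kappa, \lambda}$-combinatorial model category, and it is why \autoref{thm:accessible.pointed.endofunctors} only asserts plain accessibility for the inserter-and-equifier construction. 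Without it, $E$ need not preserve $\lambda$-presentability, so \autoref{thm:accessible.inverters} cannot be applied at the index $\lambda$, and your argument only yields strong $\mu$-accessibility of $U$ for some unspecified larger $\mu$, which is precisely the loss of control the theorem is designed to rule out. A secondary slip: $\commacat{T}{\id_\mathcal{C}}$ has objects $\tuple{X, Y, e}$ with $e : T X \to Y$; the category of pairs $\tuple{X, \xi}$ with $\xi : T X \to X$ is the \emph{inserter}, which you would first have to realise (for instance as an iso-comma category over the diagonal $\mathcal{C} \to \mathcal{C} \times \mathcal{C}$) before quoting \autoref{thm:accessible.comma.categories}. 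That part is repairable; the equaliser step is not, at least not by these means.

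The paper proceeds in the opposite order precisely so as to stay inside the colimit world: it first proves (i) and (ii) directly, using the explicit presentations of coequalisers and coproducts of $\mathbb{T}$-algebras as $\lambda$-small colimits in $\mathcal{C}$ (so that only \autoref{lem:small.objects} is needed), and then deduces (iii) by a density argument, showing that every $\lambda$-presentable object of $\mathcal{C}^\mathbb{T}$ is a retract of an algebra obtained from free algebras on $\lambda$-presentable objects by $\lambda$-small colimits. To salvage your top-down strategy you would need either an encoding of the algebra equations that never leaves $\Kompakt[\lambda]{\mathcal{C}}$, or the extra hypothesis that $\Kompakt[\lambda]{\mathcal{C}}$ is closed under finite limits\dash which the theorem as stated does not grant you.
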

\begin{proof}
(i).
By referring to the explicit construction of coequalisers in $\mathcal{C}^\mathbb{T}$ given in the proof of Proposition 4.3.6 in \citep{Borceux:1994b} and applying \autoref{lem:small.objects}, we see that $C$ is indeed a $\lambda$-presentable object in $\mathcal{C}$ when $A$ and $B$ are, provided $T : \mathcal{C} \to \mathcal{C}$ preserves colimits of small $\kappa$-filtered diagrams \emph{and} is strongly $\lambda$-accessible.

\medskip\noindent
(ii).
Let $F : \mathcal{C} \to \mathcal{C}^\mathbb{T}$ be a left adjoint for $U : \mathcal{C}^\mathbb{T} \to \mathcal{C}$.
In the proof of Proposition~4.3.4 in \citep{Borceux:1994b}, we find that the $\mathbb{T}$-algebra coproduct $\sum_{i \in I} \tuple{A_i, \alpha_i}$ may be computed by a coequaliser diagram of the following form:
\[
\begin{tikzcd}
F \argp{\sum_{i \in I} T A_i}
\rar[transform canvas={yshift=0.75ex}]
\rar[transform canvas={yshift=-0.75ex}] &
F \argp{\sum_{i \in I} A_i}
\rar &
\sum_{i \in I} \tuple{A_i, \alpha_i}
\end{tikzcd}
\]
Since $T : \mathcal{C} \to \mathcal{C}$ is strongly $\lambda$-accessible, the underlying objects of the $\mathbb{T}$-\allowhyphens algebras $F \argp{\sum_{i \in I} T A_i}$ and $F \argp{\sum_{i \in I} A_i}$ are $\lambda$-presentable objects in $\mathcal{C}$.
Thus, by (i), the underlying object of $\sum_{i \in I} \tuple{A_i, \alpha_i}$ must also be a $\lambda$-presentable object in $\mathcal{C}$.

\medskip\noindent
(iii).
It is shown in the proof of Theorem 5.5.9 in \citep{Borceux:1994b} that the full subcategory $\mathcal{F}$ of $\mathcal{C}^\mathbb{T}$ spanned by the image of $\Kompakt[\lambda]{\mathcal{C}}$ under $F : \mathcal{C} \to \mathcal{C}^\mathbb{T}$ is a dense subcategory.
Let $\mathcal{G}$ be the smallest replete full subcategory of $\mathcal{C}^\mathbb{T}$ that contains $\mathcal{F}$ and is closed under colimits of $\lambda$-small diagrams in $\mathcal{C}$.
Observe that (i) and (ii) imply that the underlying object of every $\mathbb{T}$-algebra that is in $\mathcal{G}$ must be a $\lambda$-presentable object in $\mathcal{C}$.
To show that the forgetful functor $U : \mathcal{C}^\mathbb{T} \to \mathcal{C}$ is strongly $\lambda$-accessible, it is enough to verify that every $\lambda$-presentable object in $\mathcal{C}^\mathbb{T}$ is in $\mathcal{G}$.

It is not hard to see that the comma category $\commacat{\mathcal{G}}{\tuple{A, \alpha}}$ is an essentially small $\lambda$-filtered category for any $\mathbb{T}$-algebra $\tuple{A, \alpha}$, and moreover, it can be shown that the tautological cocone for the canonical diagram $\commacat{\mathcal{G}}{\tuple{A, \alpha}} \to \mathcal{C}^\mathbb{T}$ is a colimiting cocone.
Thus, if $\tuple{A, \alpha}$ is a $\lambda$-presentable object in $\mathcal{C}^\mathbb{T}$, it must be a retract of an object in $\mathcal{G}$.
But $\mathcal{G}$ is closed under retracts, so $\tuple{A, \alpha}$ is indeed in $\mathcal{G}$.
\end{proof}
%
%
%

The following result on the existence of free algebras for a pointed endofunctor is a special case of a general construction due to \citet{Kelly:1980}.

\begin{thm}[Free algebras for a pointed endofunctor]
\label{thm:free.algebra.sequence}
Let $\mathcal{C}$ be a category with joint coequalisers for $\kappa$-small families of parallel pairs and colimits of chains of length $\le \kappa$, let $\tuple{J, \iota}$ be a pointed endofunctor on $\mathcal{C}$ such that $J : \mathcal{C} \to \mathcal{C}$ preserves colimits of $\kappa$-chains, and let $\mathcal{C}^{\tuple{J, \iota}}$ be the category of algebras for $\tuple{J, \iota}$.
\begin{enumerate}[(i)]
\item The forgetful functor $U : \mathcal{C}^{\tuple{J, \iota}} \to \mathcal{C}$ has a left adjoint, say $F : \mathcal{C} \to \mathcal{C}^{\tuple{J, \iota}}$.

\item Let $\lambda$ be a regular cardinal.
If $J : \mathcal{C} \to \mathcal{C}$ sends $\lambda$-presentable objects to $\lambda$-presentable objects and $\kappa < \lambda$, then the functor $U F : \mathcal{C} \to \mathcal{C}$ has the same property.
\end{enumerate}
\end{thm}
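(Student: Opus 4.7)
The plan is to construct the free $\tuple{J, \iota}$-algebra on an object $C$ as the colimit of a transfinite $\kappa$-chain of iterates of $J$ applied to $C$, exploiting the hypothesis that $J$ preserves $\kappa$-chain colimits so that the colimit is fixed by $J$ and therefore carries a canonical algebra structure. This is the specialisation of Kelly's general construction to pointed endofunctors; in this setting the only equation demanded of an algebra is the unit law, which will fall out of the construction, so the joint coequaliser hypothesis will not in fact be needed (it appears in \citet{Kelly:1980} to accommodate richer algebraic structures).

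Explicitly, set $X_0 = C$; for each successor $\alpha + 1 \le \kappa$ let $X_{\alpha+1} = J X_\alpha$ with transition map $\iota_{X_\alpha} : X_\alpha \to X_{\alpha+1}$; and for each limit $\beta \le \kappa$ let $X_\beta = \indlim_{\alpha < \beta} X_\alpha$, writing $k_\alpha : X_\alpha \to X_\beta$ for the cocone components. Since $\kappa$ is regular, the shift $\alpha \mapsto \alpha + 1$ is cofinal in $\kappa$, so
\[
J X_\kappa \;\cong\; \indlim_{\alpha < \kappa} J X_\alpha \;=\; \indlim_{\alpha < \kappa} X_{\alpha+1} \;\cong\; X_\kappa \text{,}
\]
the first isomorphism being the hypothesis on $J$. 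Call the resulting map $\sigma_C : J X_\kappa \to X_\kappa$; naturality of $\iota$ gives $\iota_{X_\kappa} \circ k_\alpha = J k_\alpha \circ \iota_{X_\alpha}$, and checking this against the definition of $\sigma_C$ on cocone legs forces $\sigma_C \circ \iota_{X_\kappa} = \id_{X_\kappa}$, exhibiting $\tuple{X_\kappa, \sigma_C}$ as a $\tuple{J, \iota}$-algebra which I take to be $F C$.

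For the universal property, given an algebra $\tuple{A, \beta}$ and a morphism $f : C \to A$, inductively define $f_\alpha : X_\alpha \to A$ by $f_0 = f$, $f_{\alpha+1} = \beta \circ J f_\alpha$, and colimits at limits; compatibility at successor steps uses the unit law $\beta \circ \iota_A = \id_A$ combined with naturality of $\iota$. The colimit $f_\kappa : X_\kappa \to A$ is then an algebra homomorphism because $\sigma_C$ and $\beta$ are both determined by $J$ applied to the $k_\alpha$ and to the $f_\alpha$ respectively. Uniqueness is a transfinite induction: any algebra extension $h$ of $f$ satisfies $h \circ k_{\alpha+1} = h \circ \sigma_C \circ J k_\alpha = \beta \circ J (h \circ k_\alpha)$, pinning down $h \circ k_\alpha = f_\alpha$ at every stage. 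Functoriality of $F$ and the adjunction $F \dashv U$ then drop out of the universal property, settling part (i).

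For part (ii), I show by transfinite induction on $\alpha \le \kappa$ that each $X_\alpha$ is a $\lambda$-presentable object of $\mathcal{C}$ whenever $C$ is: the base case is the hypothesis on $C$; the successor case is the hypothesis that $J$ preserves $\lambda$-presentability; and for each limit $\beta \le \kappa$ the ordinal $\beta$ has cardinality at most $\kappa < \lambda$, so $X_\beta$ is the colimit of a $\lambda$-small diagram of $\lambda$-presentable objects and is $\lambda$-presentable by \autoref{lem:small.objects}. Taking $\alpha = \kappa$ yields the required $\lambda$-presentability of $U F C = X_\kappa$. The main conceptual obstacle in the whole argument is the convergence in part (i) that turns the $\kappa$-chain-preservation hypothesis on $J$ into the isomorphism $J X_\kappa \cong X_\kappa$; once this cofinality trick is in place, everything else is routine bookkeeping.
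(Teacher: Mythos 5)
There is a genuine gap, and it is located exactly where you claim the hypotheses can be weakened: the joint coequalisers are not optional for a general \emph{pointed} endofunctor, and the naive iteration $X_{\alpha+1} = J X_\alpha$ with transition maps $\iota_{X_\alpha}$ does not converge to an algebra. The problem is your identification $\indlim_{\alpha < \kappa} J X_\alpha = \indlim_{\alpha < \kappa} X_{\alpha+1}$. Applying $J$ to your chain produces the chain whose transition maps are $J \iota_{X_\alpha} : J X_\alpha \to J J X_\alpha$, whereas the shifted chain (the one cofinality lets you compare with $X_\kappa$) has transition maps $\iota_{X_{\alpha+1}} = \iota_{J X_\alpha} : J X_\alpha \to J J X_\alpha$. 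These two morphisms coincide only when $\tuple{J, \iota}$ is \emph{well-pointed} ($J \iota = \iota J$), which is not assumed and fails for the pointed endofunctors this theorem is applied to later in the paper (the right halves $\tuple{R, \lambda}$ of functorial weak factorisation systems). Concretely, to define $\sigma_C : J X_\kappa \to X_\kappa$ you need the legs $k_{\alpha+1} : J X_\alpha \to X_\kappa$ to form a cocone under the diagram $J X_\bullet$, which requires $k_{\alpha+2} \circ J \iota_{X_\alpha} = k_{\alpha+2} \circ \iota_{J X_\alpha}$; nothing in your construction forces this, so $\sigma_C$ is not even well-defined. The same discrepancy undermines the uniqueness step of your universal-property argument.

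The paper's proof (following Kelly) repairs exactly this: at each successor stage $\beta + 1$ it defines $q_\beta : J X_\beta \to X_{\beta+1}$ as the \emph{joint coequaliser} of the pairs $J s_{\alpha,\beta}$ and $\iota_{X_\beta} \circ s_{\alpha+1,\beta} \circ q_\alpha$ for all $\alpha < \beta$, which is precisely the identification needed so that the maps $s_{\alpha+1,\kappa} \circ q_\alpha : J X_\alpha \to X_\kappa$ form a cocone and induce $\bar{q} : J \bar{X} \to \bar{X}$. This is why the joint-coequaliser hypothesis appears in the statement and cannot be dropped. Your part~(ii) argument (transfinite induction using \autoref{lem:small.objects} and $\kappa < \lambda$) is in the right spirit and matches the paper's one-line justification, but it must be run on the corrected chain, where each successor stage is a $\lambda$-small colimit (a joint coequaliser of $< \kappa$ pairs) of $\lambda$-presentable objects rather than a bare application of $J$.
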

\begin{proof}
Let $X$ be an object in $\mathcal{C}$.
We define an object $X_\alpha$ for each ordinal $\alpha \le \kappa$, a morphism $q_\alpha : J X_\alpha \to X_{\alpha + 1}$ for each ordinal $\alpha < \kappa$, and a morphism $s_{\alpha, \beta} : X_\alpha \to X_\beta$ for each pair $\tuple{\alpha, \beta}$ of ordinals such that $\alpha \le \beta \le \kappa$ by transfinite recursion as follows:
\begin{itemize}
\item We define $X_0 = X$ and $s_{0,0} = \id_{X_0}$.

\item For each ordinal $\beta < \kappa$, given $X_\alpha$ for all $\alpha \le \beta$, $q_\alpha$ for all $\alpha < \beta$, and $s_{\alpha, \beta}$ for all $\alpha \le \beta$, we define $q_\beta : J X_\beta \to X_{\beta + 1}$ to be the joint coequaliser of the parallel pairs
\[
\begin{tikzcd}[column sep=12.0ex]
J X_\alpha
\rar[transform canvas={yshift=0.75ex}]{J s_{\alpha, \beta}} 
\rar[transform canvas={yshift=-0.75ex}, swap]{\iota_{X_\beta} \circ s_{\alpha+1, \beta} \circ q_\alpha} &
J X_\beta
\end{tikzcd}
\]
for all $\alpha < \beta$.
(In particular, $q_0 : J X_0 \to X_1$ is an isomorphism.) 
We define $s_{\beta+1, \beta+1} = \id_{X_{\beta+1}}$, $s_{\beta, \beta+1} = q_\beta \circ \iota_{X_\beta}$, and $s_{\alpha, \beta+1} = s_{\beta, \beta+1} \circ s_{\alpha, \beta}$ for all $\alpha < \beta$, so that we obtain a chain $X_{\bullet} : \parens{\beta + 2} \to \mathcal{C}$.

\item For each limit ordinal $\gamma \le \kappa$, given $X_\alpha$ for all $\alpha < \gamma$ and $s_{\alpha, \beta}$ for all $\alpha \le \beta < \gamma$, we define $X_\gamma = \indlim_{\alpha < \gamma} X_\alpha$ and $s_{\gamma,\gamma} = \id_{X_\gamma}$ and, for $\alpha < \gamma$, we define $s_{\alpha, \gamma}$ to be the components of the colimiting cocone.
\end{itemize}

Let $\bar{X} = X_\kappa$.
By construction, for all $\alpha \le \beta < \kappa$, the diagram in $\mathcal{C}$ shown below commutes,
\[
\begin{tikzcd}
J X_\alpha \dar[swap]{q_\alpha} \rar{J s_{\alpha, \beta}} &
J X_\beta \dar{q_\beta} \\
X_{\alpha+1} \rar[swap]{s_{\alpha+1, \beta+1}} &
X_{\beta+1}
\end{tikzcd}
\]
and by hypothesis, the morphisms $J s_{\alpha, \kappa} : J X_\alpha \to J X_\kappa$ constitute a colimiting cocone for the evident chain $J X_{\bullet} : \kappa \to \mathcal{C}$, so there is a unique morphism $\bar{q} : J \bar{X} \to \bar{X}$ such that $\bar{q} \circ J s_{\alpha, \kappa} = s_{\alpha+1, \kappa} \circ q_\alpha$ for all $\alpha < \kappa$.
Moreover,
\[
\parens{\bar{q} \circ \iota_{X_\kappa}} \circ s_{\alpha, \kappa} = \bar{q} \circ J s_{\alpha, \kappa} \circ \iota_{X_\alpha} = s_{\alpha+1, \kappa} \circ q_\alpha \circ \iota_{X_\alpha} = s_{\alpha+1, \kappa} \circ s_{\alpha, \alpha+1} = s_{\alpha, \kappa}
\]
so $\bar{q} \circ \iota_{\bar{X}} = \id_{\bar{X}}$, \ie $\tuple{\bar{X}, \bar{q}}$ is a $\tuple{J, \iota}$-algebra.

Define $\eta_X : X \to \bar{X}$ to be $s_{0, \kappa}$.
We will now show that $\tuple{\bar{X}, \bar{q}}$ is a \emph{free} $\tuple{J, \iota}$-algebra with unit $\eta_X$.
Let $\tuple{Y, r}$ be any $\tuple{J, \iota}$-algebra and let $f : X \to Y$ be any morphism in $\mathcal{C}$.
We construct a morphism $f_\alpha : X_\alpha \to Y$ for each ordinal $\alpha \le \kappa$ by transfinite recursion:
\begin{itemize}
\item We define $f_0 = f$.

\item For each ordinal $\beta < \kappa$, given $f_\alpha$ for all $\alpha \le \beta$ such that the following equations are satisfied,
\begin{align*}
f_\beta \circ s_{\alpha, \beta} & = f_\alpha && \text{for all } \alpha \le \beta \\
f_{\alpha+1} \circ q_\alpha & = r \circ J f_\alpha && \text{for all } \alpha < \beta
\end{align*}
we also have
\begin{align*}
\parens{r \circ J f_\beta} \circ \parens{\iota_{X_\beta} \circ s_{\alpha+1, \beta} \circ q_\alpha}
& = r \circ \iota_Y \circ f_\beta \circ s_{\alpha+1, \beta} \circ q_\alpha \\
& = f_\beta \circ s_{\alpha+1, \beta} \circ q_\alpha \\
& = f_{\alpha+1} \circ q_\alpha \\
& = r \circ J f_\alpha \\
& = \parens{r \circ J f_\beta} \circ J s_{\alpha, \beta} && \text{for all } \alpha < \beta
\end{align*}
so we may define $f_{\beta+1}$ to be the unique morphism $X_{\beta+1} \to Y$ in $\mathcal{C}$ such that $f_{\beta+1} \circ q_\beta = r \circ J f_\beta$.
Then,
\[
f_{\beta+1} \circ s_{\beta, \beta+1} = f_{\beta+1} \circ q_\beta \circ \iota_{X_\beta} = r \circ J f_\beta \circ \iota_{X_\beta} = r \circ \iota_Y \circ f_\beta = f_\beta
\]
so we have $f_{\beta+1} \circ s_{\alpha, \beta+1} = f_\alpha$ for all $\alpha \le \beta + 1$.

\item For each limit ordinal $\gamma \le \kappa$, we define $f_\gamma$ to be the unique morphism $X_\gamma \to Y$ in $\mathcal{C}$ such that $f_\gamma \circ s_{\alpha, \gamma} = f_\alpha$ for all $\alpha < \gamma$.

\end{itemize}

By construction, for all ordinals $\alpha < \kappa$,
\[
\parens{r \circ J f_\kappa} \circ J s_{\alpha, \kappa}
= r \circ J f_\alpha
= f_{\alpha+1} \circ q_\alpha
= f_\kappa \circ s_{\alpha+1, \kappa} \circ q_\alpha
= \parens{f_\kappa \circ \bar{q}} \circ J s_{\alpha, \kappa}
\]
so $r \circ J f_\kappa = f_\kappa \circ \bar{q}$, \ie $f_\kappa : X_\kappa \to Y$ is a $\tuple{J, \iota}$-algebra homomorphism $\tuple{X_\kappa, \bar{q}} \to \tuple{Y, r}$.
Moreover, for any homomorphism $\bar{f} : \tuple{X_\kappa, \bar{q}} \to \tuple{Y, r}$ and any ordinal $\alpha < \kappa$,
\[
\parens{\bar{f} \circ s_{\alpha+1, \kappa}} \circ q_\alpha = \bar{f} \circ \bar{q} \circ J s_{\alpha, \kappa} = r \circ J \bar{f} \circ J s_{\alpha, \kappa}
\]
so if $\bar{f} \circ s_{\alpha, \kappa} = f_\alpha$, then $\bar{f} \circ s_{\alpha+1, \kappa} = f_{\alpha+1}$; and for any limit ordinal $\gamma \le \kappa$, if $\bar{f} \circ s_{\alpha, \kappa} = f_\alpha$ for all $\alpha < \gamma$, then $\bar{f} \circ s_{\gamma, \kappa} = f_\gamma$ as well.
In particular, if $\bar{f} \circ \eta_X = f$, then $\bar{f} = f_\kappa$ by transfinite induction.
Thus, there is a unique homomorphism $\bar{f} : \tuple{X_\kappa, \bar{q}} \to \tuple{Y, r}$ such that $\bar{f} \circ \eta_X = f$.

The above argument shows that the comma category $\commacat{X}{U}$ has an initial object, and it is well known that $U$ has a left adjoint if and only if each comma category $\commacat{X}{U}$ has an initial object, so this completes the proof of (i).
For (ii), we simply observe that $\Kompakt[\lambda]{\mathcal{C}}$ is closed under colimits of $\lambda$-small diagrams in $\mathcal{C}$ (by \autoref{lem:small.objects}), so the above construction can be carried out entirely in $\Kompakt[\lambda]{\mathcal{C}}$.
\end{proof}

\begin{thm}[The category of algebras for a accessible pointed endofunctor]
\label{thm:accessible.pointed.endofunctors}
Let $J : \mathcal{C} \to \mathcal{C}$ be a functor, let $\iota : \id_\mathcal{C} \hoto J$ be a natural transformation, and let $\mathcal{C}^{\tuple{J, \iota}}$ be the category of algebras for the pointed endofunctor $\tuple{J, \iota}$.
\begin{enumerate}[(i)]
\item If $\mathcal{C}$ has colimits of small $\kappa$-filtered diagrams and $J : \mathcal{C} \to \mathcal{C}$ preserves them, then the forgetful functor $U : \mathcal{C}^{\tuple{J, \iota}} \to \mathcal{C}$ creates colimits of small $\kappa$-filtered diagrams; and if $\mathcal{C}$ is complete, then $U : \mathcal{C}^{\tuple{J, \iota}} \to \mathcal{C}$ also creates limits for all small diagrams.

\item If $\mathcal{C}$ is an accessible functor, then $\mathcal{C}^{\tuple{J, \iota}}$ is an accessible category.

\item If $\mathcal{C}$ has joint coequalisers for $\kappa$-small families of parallel pairs and colimits of chains of length $\le \kappa$ and $J : \mathcal{C} \to \mathcal{C}$ preserves colimits of $\kappa$-chains, then $U : \mathcal{C}^{\tuple{J, \iota}} \to \mathcal{C}$ is a monadic functor.
\end{enumerate}
\end{thm}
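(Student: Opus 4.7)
The plan is to treat the three clauses in turn, with the bulk of the work concentrated in (ii); (i) and (iii) are diagram chases and an application of Beck's theorem, respectively.

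For (i), both assertions follow from the uniqueness half of a universal property. Write $D \argp{i} = \tuple{A_i, \alpha_i}$. If $\mathcal{I}$ is $\kappa$-filtered and $A = \indlim_\mathcal{I} U D$ exists in $\mathcal{C}$ with cocone $\ell_i : A_i \to A$, the hypothesis that $J$ preserves this colimit turns the cocone $\set{\ell_i \circ \alpha_i : J A_i \to A}$ into a unique map $\alpha : J A \to A$; naturality of $\iota$ gives $\alpha \iota_A \circ \ell_i = \ell_i \alpha_i \iota_{A_i} = \ell_i$ for every $i$, so $\alpha \iota_A = \id_A$ by the universal property of $A$, and uniqueness of $\alpha$ delivers the creation statement. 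For limits in the complete case, dualise: given $L = \prolim U D$ with projections $\pi_i$, the arrows $\alpha_i \cdot J \pi_i : J L \to A_i$ assemble into a cone over $U D$ and induce a unique $\alpha_L : J L \to L$ satisfying $\alpha_L \iota_L = \id_L$ pointwise.

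For (ii), I first invoke \autoref{lem:accessible.functors.are.strongly.accessible} to fix a regular cardinal $\lambda > \kappa$ such that $J$ is strongly $\lambda$-accessible, and then construct $\mathcal{C}^{\tuple{J, \iota}}$ in two steps out of $\lambda$-accessible categories. First, the unpointed category $\mathcal{C}^J$ of $J$-algebras is equivalent to the iso-comma of the two strongly $\lambda$-accessible functors $\Func{\mathbf{2}}{\mathcal{C}} \to \mathcal{C} \times \mathcal{C}$ (sending $f$ to $\tuple{\dom f, \codom f}$) and $\mathcal{C} \to \mathcal{C} \times \mathcal{C}$ (sending $A$ to $\tuple{J A, A}$), so \autoref{cor:accessible.arrow.categories}, \autoref{prop:products.of.accessible.categories} and \autoref{thm:accessible.iso-comma.categories} together show that $\mathcal{C}^J$ is $\lambda$-accessible with strongly $\lambda$-accessible forgetful functor to $\mathcal{C}$. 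Second, let $\mathbf{P}$ denote the small category with two objects and two parallel non-identity morphisms between them, and let $\mathcal{Q} \subseteq \Func{\mathbf{P}}{\mathcal{C}}$ be the full subcategory spanned by those parallel pairs $\tuple{f, g}$ with $f = g$; the map $f \mapsto \tuple{f, f}$ gives an isomorphism $\Func{\mathbf{2}}{\mathcal{C}} \cong \mathcal{Q}$, and $\mathcal{Q}$ is easily seen to be replete (an isomorphism of parallel pairs moves both legs coherently), closed under $\lambda$-filtered colimits, and strongly $\lambda$-accessibly included in $\Func{\mathbf{P}}{\mathcal{C}}$ via \autoref{prop:presentable.objects.in.diagram.categories}. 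Finally, the functor $\mathcal{C}^J \to \Func{\mathbf{P}}{\mathcal{C}}$ sending $\tuple{A, \alpha}$ to $\tuple{\id_A, \alpha \iota_A}$ is strongly $\lambda$-accessible (the only nontrivial point is that $\lambda$-presentable algebras have $\lambda$-presentable underlying objects, which comes from the strong accessibility of the forgetful), and by construction its preimage of $\mathcal{Q}$ is exactly $\mathcal{C}^{\tuple{J, \iota}}$. Thus \autoref{prop:preimage.of.accessible.replete.full.subcategories} identifies $\mathcal{C}^{\tuple{J, \iota}}$ as a $\lambda$-accessible subcategory of $\mathcal{C}^J$.

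For (iii), the stated hypotheses are precisely what \autoref{thm:free.algebra.sequence} needs to produce a left adjoint to $U$, so monadicity will follow from Beck's theorem once $U$ is shown to reflect isomorphisms and to create coequalisers of $U$-split pairs. Reflection is immediate: if $f : \tuple{A, \alpha} \to \tuple{B, \beta}$ has a two-sided inverse $g$ in $\mathcal{C}$, conjugating $f \alpha = \beta \cdot J f$ by $g$ and $J g$ gives $g \beta = \alpha \cdot J g$. For creation, given an underlying split coequaliser $q : B \to C$ of $f, g : \tuple{A, \alpha} \rightrightarrows \tuple{B, \beta}$, split coequalisers are absolute, so $J q$ is a coequaliser of $J f, J g$; the map $q \beta : J B \to C$ coequalises $J f, J g$ by the algebra condition and therefore factors through a unique $\gamma : J C \to C$ with $\gamma \cdot J q = q \beta$. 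The identity $(\gamma \iota_C) q = q \beta \iota_B = q$ together with $q$ being epi forces $\gamma \iota_C = \id_C$, so $\tuple{C, \gamma} \in \mathcal{C}^{\tuple{J, \iota}}$ and $q$ is manifestly its coequaliser; uniqueness of $\gamma$ and of its extension to a $\tuple{J, \iota}$-algebra map completes creation.

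The principal obstacle is in (ii), specifically in expressing the equation $\alpha \iota_A = \id_A$ as a preimage to which \autoref{prop:preimage.of.accessible.replete.full.subcategories} applies. A first attempt that cuts out the pointed algebras inside a product of arrow categories by asking $\tuple{\alpha \iota_A, \id_A}$ to lie in the diagonal does not work, because such diagonals are not replete in a product. Passing instead to $\Func{\mathbf{P}}{\mathcal{C}}$ and using the subcategory of parallel pairs with genuinely equal legs salvages the argument: here repleteness holds automatically, and the subcategory happens to be isomorphic to $\Func{\mathbf{2}}{\mathcal{C}}$, which the previous sections have already proven to be well-behaved.
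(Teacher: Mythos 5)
Your proof is correct, and for parts (i) and (iii) it essentially supplies the routine verifications that the paper delegates to Borceux and to ``the other hypotheses of Beck's monadicity theorem'': the algebra structure on a $\kappa$-filtered colimit (resp.\ a limit, where jointly monic projections replace the preservation hypothesis), and reflection of isomorphisms plus creation of coequalisers of $U$-split pairs. For part (ii) you take a genuinely different route: the paper constructs $\mathcal{C}^{\tuple{J, \iota}}$ by inserters and equifiers, citing the proof of Theorem~2.78 in \citep{LPAC}, whereas you realise the unpointed algebra category $\mathcal{C}^J$ as an iso-comma category (\autoref{thm:accessible.iso-comma.categories}) and then impose the unit equation as the preimage of a replete full subcategory (\autoref{prop:preimage.of.accessible.replete.full.subcategories}). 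This is conceptually the same two-step procedure (insert a structure map, then equify two composites), but it runs entirely on the paper's own machinery from \Sect\ref{sect:accessible.constructions} and, as a bonus, controls the index of accessibility, so it in effect re-proves part of \autoref{thm:strongly.accessible.pointed.endofunctors} along the way. One citation to tighten: for the inclusion $\mathcal{Q} \embedinto \Func{\mathbf{P}}{\mathcal{C}}$ to be strongly $\lambda$-accessible you need $\Func{\mathbf{P}}{\mathcal{C}}$ itself to be a $\lambda$-accessible category, which \autoref{prop:presentable.objects.in.diagram.categories} does not provide when $\mathcal{C}$ is merely accessible rather than locally presentable; but $\Func{\mathbf{P}}{\mathcal{C}} \cong \commacat{\Delta}{\Delta}$ for the diagonal $\Delta : \mathcal{C} \to \mathcal{C} \times \mathcal{C}$, which is strongly $\lambda$-accessible, so \autoref{thm:accessible.comma.categories} supplies this and the argument goes through.
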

\begin{proof} \openproof
(i).
This is analogous to the well known fact about monads: \confer Propositions 4.3.1 and 4.3.2 in \citep{Borceux:1994b}.

\medskip\noindent
(ii).
We may construct $\mathcal{C}^{\tuple{J, \iota}}$ using inserters and equifiers, as in the proof of Theorem 2.78 in \citep{LPAC}.

\medskip\noindent
(iii).
The hypotheses of \autoref{thm:free.algebra.sequence} are satisfied, so the forgetful functor $U : \mathcal{C}^{\tuple{J, \iota}} \to \mathcal{C}$ has a left adjoint.
It is not hard to check that the other hypotheses of Beck's monadicity theorem are satisfied, so $U$ is indeed a monadic functor.
\end{proof}

\begin{thm}[The category of algebras for a strongly accessible pointed endofunctor]
\label{thm:strongly.accessible.pointed.endofunctors}
Let $\mathcal{C}$ be a locally $\lambda$-presentable category, let $J : \mathcal{C} \to \mathcal{C}$ be a functor that preserves colimits of small $\kappa$-filtered diagrams, let $\iota : \id_\mathcal{C} \hoto J$ be a natural transformation, and let $\mathbb{T} = \tuple{T, \eta, \mu}$ be the induced monad on $\mathcal{C}$.
If $J : \mathcal{C} \to \mathcal{C}$ is a strongly $\lambda$-accessible functor and $\kappa < \lambda$, then:
\begin{enumerate}[(i)]
\item The functor $T : \mathcal{C} \to \mathcal{C}$ preserves colimits of small $\kappa$-filtered diagrams and is strongly $\lambda$-accessible.

\item $\mathcal{C}^{\tuple{J, \iota}}$ is a locally $\lambda$-presentable category.

\item The forgetful functor $U : \mathcal{C}^{\tuple{J, \iota}} \to \mathcal{C}$ is a strongly $\lambda$-accessible functor.
\end{enumerate}
\end{thm}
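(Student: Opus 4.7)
My plan is to bootstrap from \autoref{thm:free.algebra.sequence} to obtain (i), then reduce (ii) and (iii) to \autoref{thm:strongly.accessible.monads} applied to the induced monad $\mathbb{T}$, transported along the monadic equivalence supplied by \autoref{thm:accessible.pointed.endofunctors}(iii).

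For (i), I first verify the hypotheses of \autoref{thm:free.algebra.sequence}: $\mathcal{C}$ is cocomplete (being locally $\lambda$-presentable), and since $\kappa$ is regular the poset $\kappa$ is $\kappa$-directed, so $J$ preserves colimits of $\kappa$-chains. Consequently $U : \mathcal{C}^{\tuple{J, \iota}} \to \mathcal{C}$ has a left adjoint $F$, and $T = UF$. Strong $\lambda$-accessibility of $J$ gives $J\bigl(\Kompakt[\lambda]{\mathcal{C}}\bigr) \subseteq \Kompakt[\lambda]{\mathcal{C}}$, so \autoref{thm:free.algebra.sequence}(ii) (using $\kappa < \lambda$) shows $T = UF$ sends $\Kompakt[\lambda]{\mathcal{C}}$ into itself. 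Meanwhile \autoref{thm:accessible.pointed.endofunctors}(i) says $U$ creates colimits of small $\kappa$-filtered diagrams, and $F$ preserves all colimits as a left adjoint, so $T$ preserves colimits of small $\kappa$-filtered diagrams, and a fortiori of small $\lambda$-filtered diagrams. Together with $\mathcal{C}$ being $\lambda$-accessible, this says $T$ is strongly $\lambda$-accessible.

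For (iii), I apply \autoref{thm:strongly.accessible.monads} to the induced monad $\mathbb{T}$ on the locally $\lambda$-presentable category $\mathcal{C}$; its hypotheses are met thanks to (i). Its conclusion (iii) gives that the forgetful functor $U_\mathbb{T} : \mathcal{C}^\mathbb{T} \to \mathcal{C}$ is strongly $\lambda$-accessible. Now \autoref{thm:accessible.pointed.endofunctors}(iii) (whose hypotheses were already checked) says $U : \mathcal{C}^{\tuple{J, \iota}} \to \mathcal{C}$ is monadic, and by construction its associated monad is precisely $\mathbb{T}$; the resulting canonical equivalence $\mathcal{C}^{\tuple{J, \iota}} \simeq \mathcal{C}^\mathbb{T}$ over $\mathcal{C}$ transports strong $\lambda$-accessibility to $U$, yielding (iii). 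For (ii), the same equivalence shows $\mathcal{C}^{\tuple{J, \iota}}$ is $\lambda$-accessible, while \autoref{thm:accessible.pointed.endofunctors}(i) says $U$ creates all small limits, so $\mathcal{C}^{\tuple{J, \iota}}$ is complete; a complete $\lambda$-accessible category is locally $\lambda$-presentable, giving (ii).

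The main obstacle I anticipate is the book-keeping between the two presentations $\mathcal{C}^{\tuple{J, \iota}}$ and $\mathcal{C}^\mathbb{T}$ of the category of algebras: specifically, confirming that the monad arising from the adjunction $F \dashv U$ furnished by \autoref{thm:free.algebra.sequence} really coincides with the $\mathbb{T}$ referenced in the statement, so that the strong $\lambda$-accessibility obtained on the monadic side genuinely transfers to the pointed-endofunctor side.
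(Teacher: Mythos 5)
Your proposal is correct and follows essentially the same route as the paper: part (i) via creation of $\kappa$-filtered colimits by $U$ (\autoref{thm:accessible.pointed.endofunctors}) together with \autoref{thm:free.algebra.sequence}(ii), and parts (ii) and (iii) by reducing along the monadic equivalence $\mathcal{C}^{\tuple{J,\iota}} \simeq \mathcal{C}^{\mathbb{T}}$ to \autoref{thm:strongly.accessible.monads}. The book-keeping worry you flag at the end is harmless, since $\mathbb{T}$ is by definition the monad induced by the adjunction $F \dashv U$; your only cosmetic deviation is deriving local $\lambda$-presentability in (ii) from completeness plus $\lambda$-accessibility rather than citing the standard result on algebras for accessible monads, which amounts to the same thing.
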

\begin{proof}
(i).
By \autoref{thm:accessible.pointed.endofunctors}, the forgetful functor $U : \mathcal{C}^{\tuple{J, \iota}} \to \mathcal{C}$ creates colimits of small $\kappa$-filtered diagrams when $J : \mathcal{C} \to \mathcal{C}$ preserves colimits of small $\kappa$-filtered diagrams, so $T : \mathcal{C} \to \mathcal{C}$ must also preserve these colimits.
Moreover, \autoref{thm:free.algebra.sequence} implies $T : \mathcal{C} \to \mathcal{C}$ is strongly $\lambda$-accessible if $J : \mathcal{C} \to \mathcal{C}$ is.

\medskip\noindent
(ii).
It is not hard to check that the forgetful functor $\mathcal{C}^{\tuple{J, \iota}} \to \mathcal{C}$ is a monadic functor, so the claim reduces to the fact that $\mathcal{C}^\mathbb{T}$ is a locally $\lambda$-presentable category if $T : \mathcal{C} \to \mathcal{C}$ is a $\lambda$-accessible functor.
\unskip\footnote{See Theorem 2.78 and the following remark in \citep{LPAC}, or Theorem 5.5.9 in \citep{Borceux:1994b}.}

\medskip\noindent
(iii).
Apply \autoref{thm:strongly.accessible.monads}.
\end{proof}
\section{Accessibly generated categories}
\label{sect:accessibly.generated.categories}

\begin{numpar}
Throughout this section, $\kappa$ and $\lambda$ are regular cardinals such that $\kappa \le \lambda$.
\end{numpar}

\begin{dfn}
A \strong{$\tuple{\kappa, \lambda}$-accessibly generated category} is an essentially small category $\mathcal{C}$ that satisfies the following conditions:
\begin{itemize}
\item Every $\lambda$-small $\kappa$-filtered diagram in $\mathcal{C}$ has a colimit in $\mathcal{C}$.

\item Every object in $\mathcal{C}$ is (the object part of) a colimit of some $\lambda$-small $\kappa$-filtered diagram of $\tuple{\kappa, \lambda}$-presentable objects in $\mathcal{C}$.
\end{itemize}
\end{dfn}

\begin{remark*}
In the case where $\lambda$ is a strongly inaccessible cardinal with $\kappa < \lambda$, the concept of $\tuple{\kappa, \lambda}$-accessibly generated categories is very closely related to the concept of class-$\kappa$-accessible categories (in the sense of \citet{Chorny-Rosicky:2012}) relative to the universe of hereditarily $\lambda$-small sets, though there are some technical differences.
For our purposes, we do not need to assume that $\lambda$ is a strongly inaccessible cardinal.
\end{remark*}

\begin{remark}
\label{rem:idempotent-complete.categories.are.accessibly.generated}
\Autoref{lem:very.small.filtered.categories} says that every $\kappa$-small $\kappa$-filtered category has a cofinal idempotent, so every object is automatically $\tuple{\kappa, \kappa}$-presentable.
Thus, an essentially small category is $\tuple{\kappa, \kappa}$-accessibly generated if and only if it is idempotent-complete, \ie if and only if all idempotent endomorphisms in $\mathcal{C}$ split.
\end{remark}

\begin{remark}
In the definition of `$\tuple{\kappa, \lambda}$-accessibly generated category', we can replace `essentially small category' with `locally small category such that the full subcategory of $\tuple{\kappa, \lambda}$-presentable objects is essentially small'.

\end{remark}

\begin{prop}
\label{prop:presentable.objects.in.categories.of.presentable.objects}
Let $\mathcal{C}$ be a $\kappa$-accessible category.
\begin{enumerate}[(i)]
\item $\Kompakt[\kappa]{\mathcal{C}}$ is a $\tuple{\kappa, \kappa}$-accessibly generated category, and every object in $\Kompakt[\kappa]{\mathcal{C}}$ is $\tuple{\kappa, \kappa}$-presentable.

\item If $\kappa \sharplylt \lambda$, then $\Kompakt[\lambda]{\mathcal{C}}$ is a $\tuple{\kappa, \lambda}$-accessibly generated category, and the $\tuple{\kappa, \lambda}$-presentable objects in $\Kompakt[\lambda]{\mathcal{C}}$ are precisely the $\kappa$-presentable objects in $\mathcal{C}$.
\end{enumerate}
\end{prop}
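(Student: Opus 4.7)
My plan for (i) is to appeal to Remark~\ref{rem:idempotent-complete.categories.are.accessibly.generated}, which reduces the claim to showing that $\Kompakt[\kappa]{\mathcal{C}}$ is idempotent-complete; note that $\Kompakt[\kappa]{\mathcal{C}}$ is essentially small because $\mathcal{C}$ is $\kappa$-accessible. Since $\mathcal{C}$ admits all small $\kappa$-filtered colimits, any idempotent splits in $\mathcal{C}$, and the resulting retract of a $\kappa$-presentable object is itself $\kappa$-presentable; so the splitting already lives inside $\Kompakt[\kappa]{\mathcal{C}}$. By the remark, $\Kompakt[\kappa]{\mathcal{C}}$ is then automatically $\tuple{\kappa, \kappa}$-accessibly generated and every one of its objects is $\tuple{\kappa, \kappa}$-presentable.

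For (ii), I would first verify that $\Kompakt[\lambda]{\mathcal{C}}$ admits all $\lambda$-small $\kappa$-filtered colimits, computed as in $\mathcal{C}$: apply \autoref{lem:small.objects} with $\kappa$ there replaced by $\lambda$, so that $\lambda$-small colimits of $\lambda$-presentable objects remain $\lambda$-presentable. Given any $C \in \Kompakt[\lambda]{\mathcal{C}}$, the hypothesis $\kappa \sharplylt \lambda$ lets me apply \autoref{prop:presentable.objects.are.presentable}(ii) to write $C \cong \indlim_{\mathcal{J}} A$ in $\mathcal{C}$ for some $\lambda$-small $\kappa$-filtered diagram $A : \mathcal{J} \to \Kompakt[\kappa]{\mathcal{C}}$. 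Each vertex $A j$ is $\tuple{\kappa, \lambda}$-presentable when viewed in $\Kompakt[\lambda]{\mathcal{C}}$: indeed $\Hom[\mathcal{C}]{A j}{\blank}$ preserves all small $\kappa$-filtered colimits, and the relevant $\lambda$-small $\kappa$-filtered colimits in $\Kompakt[\lambda]{\mathcal{C}}$ are computed in $\mathcal{C}$. This yields the accessible generation statement, and also shows that every $\kappa$-presentable object of $\mathcal{C}$ is $\tuple{\kappa, \lambda}$-presentable in $\Kompakt[\lambda]{\mathcal{C}}$. For the reverse inclusion, suppose $C \in \Kompakt[\lambda]{\mathcal{C}}$ is $\tuple{\kappa, \lambda}$-presentable; applying the decomposition above to $C$ itself, the defining property of $C$ forces $\id_C$ to factor through some vertex $A j \in \Kompakt[\kappa]{\mathcal{C}}$, exhibiting $C$ as a retract of a $\kappa$-presentable object, hence $C$ itself is $\kappa$-presentable in $\mathcal{C}$.

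The main subtlety is keeping the two cardinals $\kappa$ and $\lambda$ correctly paired when invoking \autoref{lem:small.objects} and \autoref{prop:presentable.objects.are.presentable}, and verifying that $\lambda$-small $\kappa$-filtered colimits in $\Kompakt[\lambda]{\mathcal{C}}$ are computed as in $\mathcal{C}$ \dash this is what allows hom-functor preservation in $\mathcal{C}$ to transfer automatically to $\Kompakt[\lambda]{\mathcal{C}}$, and it is also what makes the retract argument legitimate. Once that bookkeeping is sorted out, the rest of the proof is essentially formal.
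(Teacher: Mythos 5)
Your proposal is correct and follows essentially the same route as the paper, whose proof is simply to combine \autoref{lem:small.objects}, \autoref{prop:presentable.objects.are.presentable}, and \autoref{rem:idempotent-complete.categories.are.accessibly.generated}; you have supplied exactly the details that combination requires (idempotent-splitting inside $\Kompakt[\kappa]{\mathcal{C}}$ for (i), and for (ii) the computation of $\lambda$-small $\kappa$-filtered colimits in $\mathcal{C}$ together with the retract argument identifying the $\tuple{\kappa,\lambda}$-presentables with $\Kompakt[\kappa]{\mathcal{C}}$). The cardinal bookkeeping you flag is handled correctly.
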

\begin{proof}
Combine \autoref{lem:small.objects}, \autoref{prop:presentable.objects.are.presentable}, and \autoref{rem:idempotent-complete.categories.are.accessibly.generated}.
\end{proof}

\begin{dfn}
Let $\mu$ be a regular cardinal such that $\lambda \le \mu$.
A \strong{$\tuple{\kappa, \lambda, \mu}$-accessibly generated extension} is a functor $F : \mathcal{A} \to \mathcal{B}$ with the following properties:
\begin{itemize}
\item $\mathcal{A}$ is a $\tuple{\kappa, \lambda}$-accessibly generated category.

\item $\mathcal{B}$ is a $\tuple{\kappa, \mu}$-accessibly generated category.

\item $F : \mathcal{A} \to \mathcal{B}$ preserves colimits of $\lambda$-small $\kappa$-filtered diagrams.

\item $F$ sends $\tuple{\kappa, \lambda}$-presentable objects in $\mathcal{A}$ to $\tuple{\kappa, \mu}$-presentable objects in $\mathcal{B}$.

\item The induced functor $F : \Kompakt[\kappa][\lambda]{\mathcal{A}} \to \Kompakt[\kappa][\mu]{\mathcal{B}}$ is fully faithful and essentially surjective on objects.
\end{itemize}
\end{dfn}

\begin{remark*}
The concept of accessibly generated extensions is essentially a generalisation of the concept of accessible extensions, as defined in \citep{Low:2013a}.
\end{remark*}

\begin{remark}
\label{rem:inclusion.of.generators.is.a.accessibly.generated.extension}
Let $\mathcal{C}$ be a $\tuple{\kappa, \lambda}$-accessibly generated category.
Then, in view of \autoref{rem:idempotent-complete.categories.are.accessibly.generated}, the inclusion $\Kompakt[\kappa][\lambda]{\mathcal{C}} \embedinto \mathcal{C}$ is a $\tuple{\kappa, \kappa, \lambda}$-accessibly generated extension.
\end{remark}

\begin{lem}
\label{lem:composition.of.accessibly.generated.extensions}
Let $F : \mathcal{A} \to \mathcal{B}$ be a $\tuple{\kappa, \lambda, \mu}$-accessibly generated extension and let $G : \mathcal{B} \to \mathcal{C}$ be a $\tuple{\kappa, \mu, \nu}$-accessibly generated extension.
If $\lambda \le \mu$, then the composite $G F : \mathcal{A} \to \mathcal{C}$ is a $\tuple{\kappa, \lambda, \nu}$-accessibly generated extension.
\end{lem}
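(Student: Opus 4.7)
The plan is to verify the five defining properties of a $\tuple{\kappa, \lambda, \nu}$-accessibly generated extension for $GF : \mathcal{A} \to \mathcal{C}$ one at a time, exploiting the hypothesis $\lambda \le \mu$ at each step where a cardinality bound needs to be weakened.

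First, the conditions that $\mathcal{A}$ is $\tuple{\kappa, \lambda}$-accessibly generated and that $\mathcal{C}$ is $\tuple{\kappa, \nu}$-accessibly generated are immediate from the hypotheses on $F$ and $G$, respectively. For preservation of colimits, I would observe that any $\lambda$-small $\kappa$-filtered diagram $D$ in $\mathcal{A}$ is sent by $F$ to a colimiting cocone in $\mathcal{B}$, and $FD$ is itself $\lambda$-small, hence (since $\lambda \le \mu$) $\mu$-small and $\kappa$-filtered, so $G$ preserves its colimit. Therefore $GF$ preserves colimits of $\lambda$-small $\kappa$-filtered diagrams.

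Next, if $A$ is a $\tuple{\kappa, \lambda}$-presentable object in $\mathcal{A}$, then $FA$ is $\tuple{\kappa, \mu}$-presentable in $\mathcal{B}$ by the defining property of $F$, and then $GFA$ is $\tuple{\kappa, \nu}$-presentable in $\mathcal{C}$ by the defining property of $G$. Thus $GF$ restricts to a functor $\Kompakt[\kappa][\lambda]{\mathcal{A}} \to \Kompakt[\kappa][\nu]{\mathcal{C}}$. For full faithfulness of this restriction: both $F : \Kompakt[\kappa][\lambda]{\mathcal{A}} \to \Kompakt[\kappa][\mu]{\mathcal{B}}$ and $G : \Kompakt[\kappa][\mu]{\mathcal{B}} \to \Kompakt[\kappa][\nu]{\mathcal{C}}$ are fully faithful by hypothesis, and their composite factors through $GF : \Kompakt[\kappa][\lambda]{\mathcal{A}} \to \Kompakt[\kappa][\nu]{\mathcal{C}}$, so the restriction inherits full faithfulness.

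Finally, for essential surjectivity: given a $\tuple{\kappa, \nu}$-presentable object $C$ in $\mathcal{C}$, the hypothesis on $G$ yields a $\tuple{\kappa, \mu}$-presentable object $B$ in $\mathcal{B}$ with $GB \cong C$, and then the hypothesis on $F$ yields a $\tuple{\kappa, \lambda}$-presentable object $A$ in $\mathcal{A}$ with $FA \cong B$, whence $GFA \cong C$. There is no substantive obstacle; the proof is essentially a diagram-chase through the definitions, and the only place any care is required is the single invocation of $\lambda \le \mu$ to enlarge $\lambda$-smallness to $\mu$-smallness when applying $G$.
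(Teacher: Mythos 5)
Your proof is correct and matches the paper's approach: the paper dismisses this lemma as ``Straightforward,'' and your verification is exactly the intended unwinding of the five defining conditions, with $\lambda \le \mu$ used in the one place it is needed (to promote $\lambda$-small index categories to $\mu$-small ones so that $G$ preserves the relevant colimits).
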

\begin{proof} \obviousproof
Straightforward.
\end{proof}

\begin{lem}
\label{lem:accessible.comparison.lemma}
Let $F : \mathcal{A} \to \mathcal{B}$ be a $\tuple{\kappa, \kappa, \lambda}$-accessibly generated extension.
\begin{enumerate}[(i)]
\item There is a functor $U : \mathcal{B} \to \Ind[\kappa]{\mathcal{A}}$ equipped with a natural bijection of the form below,
\[
\Hom[{\Ind[\kappa]{\mathcal{A}}}]{A}{U B} \cong \Hom[\mathcal{B}]{F A}{B}
\]
and it is unique up to unique isomorphism.

\item Moreover, the functor $U : \mathcal{B} \to \Ind[\kappa]{\mathcal{A}}$ is fully faithful and preserves colimits of $\lambda$-small $\kappa$-filtered diagrams.

\item In particular, $F : \mathcal{A} \to \mathcal{B}$ is a fully faithful functor.

\item If $\kappa \sharplylt \lambda$, then the $\lambda$-accessible functor $\bar{U} : \Ind[\lambda]{\mathcal{B}} \to \Ind[\kappa]{\mathcal{A}}$ induced by $U : \mathcal{B} \to \Ind[\kappa]{\mathcal{A}}$ is fully faithful and essentially surjective on objects.

\item In particular, if $\kappa \sharplylt \lambda$, then $\Ind[\lambda]{\mathcal{B}}$ is a $\kappa$-accessible category.
\end{enumerate}
\end{lem}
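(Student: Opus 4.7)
The plan is to build $U$ as a pointwise partial right adjoint to $F$, using the $\tuple{\kappa,\lambda}$-presentable resolutions available in $\mathcal{B}$. For (i), pick for each $B \in \mathcal{B}$ a $\lambda$-small $\kappa$-filtered diagram $Y : \mathcal{J} \to \Kompakt[\kappa][\lambda]{\mathcal{B}}$ with $B \cong \indlim_\mathcal{J} Y$, lift along the equivalence $F : \mathcal{A} \simeq \Kompakt[\kappa][\lambda]{\mathcal{B}}$ to a diagram $A : \mathcal{J} \to \mathcal{A}$, and set $U B := \indlim_\mathcal{J} A$ in $\Ind[\kappa]{\mathcal{A}}$. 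The required bijection is then a three-step computation: for $A_0 \in \mathcal{A}$,
\[
\Hom[{\Ind[\kappa]{\mathcal{A}}}]{A_0}{U B} \cong \indlim_\mathcal{J} \Hom[\mathcal{A}]{A_0}{A j} \cong \indlim_\mathcal{J} \Hom[\mathcal{B}]{F A_0}{F A j} \cong \Hom[\mathcal{B}]{F A_0}{B},
\]
using respectively the $\kappa$-presentability of $A_0$ in $\Ind[\kappa]{\mathcal{A}}$, fully faithfulness of $F$ on $\mathcal{A}$, and the $\tuple{\kappa,\lambda}$-presentability of $F A_0$ in $\mathcal{B}$. Since $\mathcal{A}$ is a dense generator of $\Ind[\kappa]{\mathcal{A}}$, Yoneda gives uniqueness of $U B$ up to unique isomorphism and hence functoriality of $U$.

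For (ii), pushing the same construction one step further yields
\[
\Hom[{\Ind[\kappa]{\mathcal{A}}}]{U B}{U B'} \cong \prolim_\mathcal{J} \Hom[\mathcal{B}]{F A j}{B'} \cong \Hom[\mathcal{B}]{B}{B'},
\]
proving fully faithfulness, and preservation of $\lambda$-small $\kappa$-filtered colimits is checked against $\mathcal{A}$ using the same $\tuple{\kappa,\lambda}$-presentability of $F A_0$. Part (iii) is already built into the definition of an accessibly generated extension, via $\mathcal{A} = \Kompakt[\kappa][\kappa]{\mathcal{A}}$.

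The substance of the lemma is (iv), and this is where $\kappa \sharplylt \lambda$ enters essentially. The key preliminary observation is that each $U B$, being by construction a $\lambda$-small $\kappa$-filtered colimit of objects of $\mathcal{A}$ in $\Ind[\kappa]{\mathcal{A}}$, is $\lambda$-presentable there by \autoref{prop:presentable.objects.are.presentable}. Extend $U$ along Yoneda to $\bar U : \Ind[\lambda]{\mathcal{B}} \to \Ind[\kappa]{\mathcal{A}}$ by $\bar U (\indlim_\mathcal{K} B_k) := \indlim_\mathcal{K} U B_k$. Fully faithfulness of $\bar U$ then reduces, via $\lambda$-filtered resolutions of $X, X' \in \Ind[\lambda]{\mathcal{B}}$ with vertices in $\mathcal{B}$, to (ii) combined with the $\lambda$-presentability just noted. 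For essential surjectivity, use that $\Ind[\kappa]{\mathcal{A}}$ is itself $\lambda$-accessible (again by $\kappa \sharplylt \lambda$) to write any $X$ as a $\lambda$-filtered colimit of $\lambda$-presentables; each such $\lambda$-presentable admits by \autoref{prop:presentable.objects.are.presentable} a $\lambda$-small $\kappa$-filtered presentation $\indlim_\mathcal{J} A$ with $A j \in \mathcal{A}$, and for $B := \indlim_\mathcal{J} F A$ computed in $\mathcal{B}$ the adjunction from (i) identifies $U B$ with this $\lambda$-presentable; hitting $\bar U$ with the corresponding $\lambda$-filtered colimit of these $B$'s in $\Ind[\lambda]{\mathcal{B}}$ then recovers $X$. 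Part (v) is immediate, as $\bar U$ is an equivalence. The main obstacle I expect is bookkeeping: the colimits defining $U B$ live in $\Ind[\kappa]{\mathcal{A}}$ (and need not exist in $\mathcal{A}$), the resolution of $B$ lives in $\mathcal{B}$, and the reassembly of $X$ lives in $\Ind[\lambda]{\mathcal{B}}$; once the venues are kept straight, everything reduces to the $\tuple{\kappa,\lambda}$-presentability of $F A_0$ in $\mathcal{B}$ and to $\kappa \sharplylt \lambda$.
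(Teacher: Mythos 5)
Your proposal is correct and follows essentially the same route as the paper: define $U B$ as the $\kappa$-ind-object representing $\Hom[\mathcal{B}]{F \blank}{B}$ via a $\lambda$-small $\kappa$-filtered presentation of $B$ lifted through the equivalence $\mathcal{A} \simeq \Kompakt[\kappa][\lambda]{\mathcal{B}}$, then deduce (ii)--(v) by the same colimit manipulations together with \autoref{prop:presentable.objects.are.presentable} and the $\lambda$-accessibility of $\Ind[\kappa]{\mathcal{A}}$ under $\kappa \sharplylt \lambda$. The only cosmetic difference is that you observe (iii) is already built into the definition in the $\tuple{\kappa,\kappa,\lambda}$ case, whereas the paper derives it from (ii); both are valid.
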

\begin{proof}
(i).
Let $B$ be an object in $\mathcal{B}$.
By hypothesis, there is a $\lambda$-small $\kappa$-filtered diagram $X : \mathcal{J} \to \mathcal{A}$ such that $B \cong {\indlim}_\mathcal{J} F X$.
Then, for every object $A$ in $\mathcal{A}$,
\[
\Hom[\mathcal{B}]{F A}{B} \cong \indlim_\mathcal{J} \Hom[\mathcal{B}]{F A}{F X} \cong {\indlim}_\mathcal{J} \Hom[\mathcal{A}]{A}{X}
\]
so there is an object $U B$ in $\Ind[\kappa]{\mathcal{A}}$ such that 
\[
\Hom[{\Ind[\kappa]{\mathcal{A}}}]{A}{U B} \cong \Hom[\mathcal{B}]{F A}{B}
\]
for all objects $A$ in $\mathcal{A}$, and an object with such a natural bijection is unique up to unique isomorphism, by \autoref{thm:universal.property.of.free.ind.completion} and \autoref{prop:dense.generators.for.accessible.categories}.
A similar argument can be used to define $U g$ for morphisms $g : B_0 \to B_1$ in $\mathcal{B}$, and it is straightforward to check that this indeed defines a functor $U : \mathcal{B} \to \Ind[\kappa]{\mathcal{A}}$.

\medskip\noindent
(ii).
Let $Y : \mathcal{J} \to \mathcal{B}$ be a $\lambda$-small $\kappa$-filtered diagram in $\mathcal{B}$.
Then, for any object $A$ in $\mathcal{A}$,
\begin{align*}
\Hom[\mathcal{B}]{F A}{\indlim_\mathcal{J} Y} 
& \cong \indlim_\mathcal{J} \Hom[\mathcal{B}]{F A}{Y} \\
& \cong \indlim_\mathcal{J} \Hom[{\Ind[\kappa]{\mathcal{A}}}]{A}{U Y} \\
& \cong \Hom[{\Ind[\kappa]{\mathcal{A}}}]{A}{\indlim_\mathcal{J} U Y}
\end{align*}
so $U : \mathcal{B} \to \Ind[\kappa]{\mathcal{A}}$ indeed preserves colimits of $\lambda$-small $\kappa$-filtered diagrams.
A similar argument can be used to show that $U : \mathcal{B} \to \Ind[\kappa]{\mathcal{A}}$ is fully faithful.

\medskip\noindent
(iii).
The composite $U F : \mathcal{A} \to \Ind[\kappa]{\mathcal{A}}$ is clearly fully faithful, so it follows from (ii) that $F : \mathcal{A} \to \mathcal{B}$ is fully faithful.

\medskip\noindent
(iv).
\Autoref{prop:presentable.objects.are.presentable} implies that $U : \mathcal{B} \to \Ind[\kappa]{\mathcal{A}}$ is essentially surjective onto the full subcategory of $\lambda$-presentable objects in $\Ind[\kappa]{\mathcal{A}}$.
Moreover, since $\kappa \sharplylt \lambda$, $\Ind[\kappa]{\mathcal{A}}$ is also a $\lambda$-accessible category,
\unskip\footnote{See Theorem~2.3.10 in \citep{Makkai-Pare:1989} or Theorem~2.11 in \citep{LPAC}.}
and it follows that the induced $\lambda$-accessible functor $\Ind[\lambda]{\mathcal{B}} \to \Ind[\kappa]{\mathcal{A}}$ is fully faithful and essentially surjective on objects.

\medskip\noindent
(v).
We know that $\Ind[\kappa]{\mathcal{A}}$ is a $\kappa$-accessible category, so it follows from (iv) that $\Ind[\lambda]{\mathcal{B}}$ is also a $\kappa$-accessible category.
\end{proof}

\begin{prop}
\label{prop:accessibly.generated.extensions}
Let $F : \mathcal{A} \to \mathcal{B}$ be a $\tuple{\kappa, \lambda, \mu}$-accessibly generated extension.
Assuming either $\kappa = \lambda$ or $\kappa \sharplylt \lambda$:
\begin{enumerate}[(i)]
\item There is a functor $U : \mathcal{B} \to \Ind[\lambda]{\mathcal{A}}$ equipped with a natural bijection of the form below,
\[
\Hom[{\Ind[\lambda]{\mathcal{A}}}]{A}{U B} \cong \Hom[\mathcal{B}]{F A}{B}
\]
and it is unique up to unique isomorphism.

\item Moreover, the functor $U : \mathcal{B} \to \Ind[\lambda]{\mathcal{A}}$ is fully faithful and preserves colimits of $\mu$-small $\lambda$-filtered diagrams.

\item In particular, $F : \mathcal{A} \to \mathcal{B}$ is a fully faithful functor.

\item If $\lambda \sharplylt \mu$, then the $\mu$-accessible functor $\bar{U} : \Ind[\mu]{\mathcal{B}} \to \Ind[\lambda]{\mathcal{A}}$ induced by $U : \mathcal{B} \to \Ind[\lambda]{\mathcal{A}}$ is fully faithful and essentially surjective on objects.

\item In particular, if $\lambda \sharplylt \mu$, then $\Ind[\mu]{\mathcal{B}}$ is a $\kappa$-accessible category.
\end{enumerate}
\end{prop}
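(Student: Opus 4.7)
The plan is to reduce to \autoref{lem:accessible.comparison.lemma} by factoring through $\mathcal{A}_0 := \Kompakt[\kappa][\lambda]{\mathcal{A}}$. Let $\iota : \mathcal{A}_0 \embedinto \mathcal{A}$ be the inclusion; by \autoref{rem:inclusion.of.generators.is.a.accessibly.generated.extension} this is a $\tuple{\kappa, \kappa, \lambda}$-accessibly generated extension, and \autoref{lem:composition.of.accessibly.generated.extensions} then makes $F_0 := F \circ \iota : \mathcal{A}_0 \to \mathcal{B}$ a $\tuple{\kappa, \kappa, \mu}$-accessibly generated extension. In the case $\kappa = \lambda$ one has $\mathcal{A}_0 = \mathcal{A}$ by \autoref{rem:idempotent-complete.categories.are.accessibly.generated}, so the conclusion is just \autoref{lem:accessible.comparison.lemma} applied directly to $F$; henceforth assume $\kappa \sharplylt \lambda$.

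Apply \autoref{lem:accessible.comparison.lemma} to both $\iota$ and $F_0$ to obtain fully faithful functors $V : \mathcal{A} \to \Ind[\kappa]{\mathcal{A}_0}$ and $U_0 : \mathcal{B} \to \Ind[\kappa]{\mathcal{A}_0}$, where $V$ preserves $\lambda$-small $\kappa$-filtered colimits and $U_0$ preserves $\mu$-small $\kappa$-filtered colimits; the induced $\lambda$-accessible functor $\bar{V} : \Ind[\lambda]{\mathcal{A}} \to \Ind[\kappa]{\mathcal{A}_0}$ is an equivalence by part (iv) of that lemma (using $\kappa \sharplylt \lambda$). Fix a quasi-inverse $\bar{V}^{-1}$ and set $U := \bar{V}^{-1} \circ U_0$. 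For part (i), any $A \in \mathcal{A}$ may be written as a $\lambda$-small $\kappa$-filtered colimit $\indlim_\mathcal{J} X$ with $X_j \in \mathcal{A}_0$, and both $\Hom[{\Ind[\lambda]{\mathcal{A}}}]{A}{UB}$ and $\Hom[\mathcal{B}]{FA}{B}$ then reduce to $\prolim_\mathcal{J} \Hom[{\Ind[\kappa]{\mathcal{A}_0}}]{X_j}{U_0 B} \cong \prolim_\mathcal{J} \Hom[\mathcal{B}]{F_0 X_j}{B}$ by exploiting the colimit-preservation of $V$ (hence $\bar{V}$), $F$, and the bijection from part (i) of the cited lemma for $F_0$; uniqueness is inherited via density of $\mathcal{A}_0$. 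Parts (ii) and (iii) are then immediate: full faithfulness of $U$ follows since $U_0$ is fully faithful and $\bar{V}^{-1}$ is an equivalence; $U$ preserves $\mu$-small $\lambda$-filtered colimits because every $\lambda$-filtered category is $\kappa$-filtered; and $F$ is fully faithful because $UF \cong \bar{V}^{-1} \circ V$ is a fully faithful functor composed with an equivalence (here one uses that both $UF$ and $V$ preserve $\lambda$-small $\kappa$-filtered colimits and agree on $\mathcal{A}_0$, hence agree on all of $\mathcal{A}$).

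For (iv) and (v), assume $\lambda \sharplylt \mu$. A short argument verifies the sharpness transitivity $\kappa \sharplylt \mu$: given a $\mu$-small set $X$, choose a $\mu$-small cofinal family of $\lambda$-small subsets via $\lambda \sharplylt \mu$ and within each a $\lambda$-small cofinal family of $\kappa$-small subsets via $\kappa \sharplylt \lambda$, then take the union. Armed with this, \autoref{lem:accessible.comparison.lemma}(iv) applied to $F_0$ yields an equivalence $\bar{U}_0 : \Ind[\mu]{\mathcal{B}} \to \Ind[\kappa]{\mathcal{A}_0}$, whence $\bar{U} := \bar{V}^{-1} \circ \bar{U}_0 : \Ind[\mu]{\mathcal{B}} \to \Ind[\lambda]{\mathcal{A}}$ is an equivalence proving (iv); and since $\Ind[\kappa]{\mathcal{A}_0}$ is tautologically $\kappa$-accessible, so is $\Ind[\mu]{\mathcal{B}}$, proving (v). The main technical point is the sharpness transitivity together with the identification $UF \cong \bar{V}^{-1} \circ V$ on all of $\mathcal{A}$; everything else is bookkeeping atop \autoref{lem:accessible.comparison.lemma}.
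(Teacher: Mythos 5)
Your argument is correct and is essentially the paper's own proof, unpacked: the paper likewise factors through $\Kompakt[\kappa][\lambda]{\mathcal{A}}$ via \autoref{rem:inclusion.of.generators.is.a.accessibly.generated.extension} and \autoref{lem:composition.of.accessibly.generated.extensions}, notes $\kappa \sharplylt \mu$ (citing Makkai--Par\'e for the transitivity you verify by hand), and concludes by two applications of \autoref{lem:accessible.comparison.lemma} --- exactly your $V$, $\bar{V}$ and $U_0$, $\bar{U}_0$. The only difference is that you spell out the bookkeeping (the identification $U F \cong \inv{\bar{V}} V$ via density, and the cofinality argument for $\kappa \sharplylt \mu$) that the paper leaves implicit.
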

\begin{proof}
\Autoref{rem:inclusion.of.generators.is.a.accessibly.generated.extension} says the inclusion $\Kompakt[\kappa][\lambda]{\mathcal{A}} \embedinto \mathcal{A}$ is a $\tuple{\kappa, \kappa, \lambda}$-accessibly generated extension, so by \autoref{lem:composition.of.accessibly.generated.extensions}, the composite $\Kompakt[\kappa][\lambda]{\mathcal{A}} \embedinto \mathcal{A} \to \mathcal{B}$ is a $\tuple{\kappa, \kappa, \mu}$-accessible generated extension.
Moreover, $\kappa \sharplylt \mu$,
\unskip\footnote{See Proposition 2.3.2 in \citep{Makkai-Pare:1989}.}
so the claims follow, by (two applications of) \autoref{lem:accessible.comparison.lemma}.
\end{proof}

\begin{thm}
\label{thm:classification.of.accessibly.generated.categories}
If either $\kappa = \lambda$ or $\kappa \sharplylt \lambda$, then the following are equivalent for a idempotent-complete category $\mathcal{C}$:
\begin{enumerate}[(i)]
\item $\mathcal{C}$ is a $\tuple{\kappa, \lambda}$-accessibly generated category.

\item $\Ind[\lambda]{\mathcal{C}}$ is a $\kappa$-accessible category.

\item $\mathcal{C}$ is equivalent to $\Kompakt[\lambda]{\mathcal{D}}$ for some $\kappa$-accessible category $\mathcal{D}$.
\end{enumerate}
\end{thm}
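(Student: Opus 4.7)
The plan is to close the cycle (iii) $\Rightarrow$ (i) $\Rightarrow$ (ii) $\Rightarrow$ (iii), leveraging the machinery of accessibly generated extensions developed above.

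Implication (iii) $\Rightarrow$ (i) is immediate from \autoref{prop:presentable.objects.in.categories.of.presentable.objects}\dash invoking part (ii) when $\kappa \sharplylt \lambda$ and, via \autoref{rem:idempotent-complete.categories.are.accessibly.generated}, part (i) when $\kappa = \lambda$. For (i) $\Rightarrow$ (ii), I would combine \autoref{rem:inclusion.of.generators.is.a.accessibly.generated.extension}\dash which identifies the inclusion $\Kompakt[\kappa][\lambda]{\mathcal{C}} \embedinto \mathcal{C}$ as a $\tuple{\kappa, \kappa, \lambda}$-accessibly generated extension\dash with \autoref{lem:accessible.comparison.lemma}(v) to deduce, when $\kappa \sharplylt \lambda$, that $\Ind[\lambda]{\mathcal{C}}$ is $\kappa$-accessible. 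The case $\kappa = \lambda$ is automatic: by \autoref{rem:idempotent-complete.categories.are.accessibly.generated} the hypothesis there reduces to $\mathcal{C}$ being essentially small and idempotent-complete, and $\Ind[\kappa]{\mathcal{C}}$ is then $\kappa$-accessible by the very definition of the free $\kappa$-ind-completion.

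For (ii) $\Rightarrow$ (iii), the natural candidate is $\mathcal{D} = \Ind[\lambda]{\mathcal{C}}$, which is $\kappa$-accessible by hypothesis. The strategy is to check that the canonical embedding $Y : \mathcal{C} \embedinto \mathcal{D}$ induces an equivalence $\mathcal{C} \simeq \Kompakt[\lambda]{\mathcal{D}}$. Full faithfulness of $Y$, and the fact that its image consists of $\lambda$-presentable objects, are built into the universal property of $\Ind[\lambda]{\mathcal{C}}$. For essential surjectivity onto $\Kompakt[\lambda]{\mathcal{D}}$, I would take any $\lambda$-presentable $D$ in $\mathcal{D}$, use the canonical density of $\mathcal{C}$ in $\Ind[\lambda]{\mathcal{C}}$ to express $D$ as a $\lambda$-filtered colimit of the form $\indlim_\mathcal{J} Y X$ for some $X : \mathcal{J} \to \mathcal{C}$, and exploit $\lambda$-presentability of $D$ to factor $\id_D$ through some component $Y X j$; this displays $D$ as a retract of an object in the image of $Y$, and idempotent-completeness of $\mathcal{C}$ promotes that retract back into $\mathcal{C}$.

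The main obstacle is the implication (i) $\Rightarrow$ (ii) in the case $\kappa \sharplylt \lambda$: one must convert the $\lambda$-small $\kappa$-filtered colimit presentations encoded in the $\tuple{\kappa, \lambda}$-accessibly generated structure on $\mathcal{C}$ into bona fide small $\kappa$-filtered colimit presentations of objects of $\Ind[\lambda]{\mathcal{C}}$ by $\kappa$-presentable pieces. This is precisely what \autoref{lem:accessible.comparison.lemma} accomplishes; with that lemma in hand, the remainder of the argument is essentially bookkeeping.
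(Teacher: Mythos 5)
Your proposal is correct and follows essentially the same route as the paper: the same three-step cycle, with (i) $\Rightarrow$ (ii) obtained by applying \autoref{lem:accessible.comparison.lemma} to \autoref{rem:inclusion.of.generators.is.a.accessibly.generated.extension}, (ii) $\Rightarrow$ (iii) by exhibiting each $\lambda$-presentable object of $\Ind[\lambda]{\mathcal{C}}$ as a retract of an object in the image of the canonical embedding and then using idempotent-completeness, and (iii) $\Rightarrow$ (i) by \autoref{prop:presentable.objects.in.categories.of.presentable.objects}. Your explicit handling of the degenerate case $\kappa = \lambda$ (where \autoref{lem:accessible.comparison.lemma}(v) does not apply and one falls back on the definition of the free $\kappa$-ind-completion) is a small but welcome point of care that the paper's proof leaves implicit.
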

\begin{proof}
(i) \implies (ii).
Apply \autoref{lem:accessible.comparison.lemma} to \autoref{rem:inclusion.of.generators.is.a.accessibly.generated.extension}.

\medskip\noindent
(ii) \implies (iii).
It is not hard to check that every $\lambda$-presentable object in $\Ind[\lambda]{\mathcal{C}}$ is a retract of some object in the image of the canonical embedding $\mathcal{C} \to \Ind[\lambda]{\mathcal{C}}$.
But $\mathcal{C}$ is idempotent-complete, so the canonical embedding is fully faithful and essentially surjective onto the full subcategory of $\lambda$-presentable objects in $\Ind[\lambda]{\mathcal{C}}$.

\medskip\noindent
(iii) \implies (i).
See \autoref{prop:presentable.objects.in.categories.of.presentable.objects}.
\end{proof}

\begin{cor}
If $\mathcal{C}$ is a $\tuple{\kappa, \lambda}$-accessibly generated category, then so is $\Func{\mathbf{2}}{\mathcal{C}}$.
\end{cor}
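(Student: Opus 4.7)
The plan is to route through the classification theorem \autoref{thm:classification.of.accessibly.generated.categories} rather than verify the definition by hand. The case $\kappa = \lambda$ is immediate: by \autoref{rem:idempotent-complete.categories.are.accessibly.generated}, $\tuple{\kappa,\kappa}$-accessibly generated is the same as essentially small and idempotent-complete, and the arrow category of such a category clearly has both properties, since idempotents in $\Func{\mathbf{2}}{\mathcal{C}}$ split componentwise. So assume $\kappa \sharplylt \lambda$.

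By \autoref{thm:classification.of.accessibly.generated.categories}, pick a $\kappa$-accessible category $\mathcal{D}$ with $\mathcal{C} \simeq \Kompakt[\lambda]{\mathcal{D}}$ (one may take $\mathcal{D} = \Ind[\lambda]{\mathcal{C}}$). Then \autoref{cor:accessible.arrow.categories} shows that $\Func{\mathbf{2}}{\mathcal{D}}$ is a $\kappa$-accessible category. The key calculation is to identify $\Func{\mathbf{2}}{\mathcal{C}}$ with $\Kompakt[\lambda]{\Func{\mathbf{2}}{\mathcal{D}}}$. On the one hand, a morphism in $\mathcal{C} \simeq \Kompakt[\lambda]{\mathcal{D}}$ is nothing other than a morphism between $\lambda$-presentable objects in $\mathcal{D}$; on the other hand, since $\kappa \sharplylt \lambda$, $\mathcal{D}$ is also a $\lambda$-accessible category (this is standard: see Proposition 2.3.2 in \citep{Makkai-Pare:1989}), so applying \autoref{cor:accessible.arrow.categories} with $\lambda$ in place of $\kappa$ tells us that the $\lambda$-presentable objects in $\Func{\mathbf{2}}{\mathcal{D}}$ are exactly the morphisms between $\lambda$-presentable objects in $\mathcal{D}$. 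These two identifications match up, giving $\Func{\mathbf{2}}{\mathcal{C}} \simeq \Kompakt[\lambda]{\Func{\mathbf{2}}{\mathcal{D}}}$.

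With this equivalence in hand, we simply feed $\Func{\mathbf{2}}{\mathcal{D}}$ (which is $\kappa$-accessible) back into the (iii) $\implies$ (i) direction of \autoref{thm:classification.of.accessibly.generated.categories} to conclude that $\Func{\mathbf{2}}{\mathcal{C}}$ is $\tuple{\kappa,\lambda}$-accessibly generated. The main obstacle in the argument is the bookkeeping in the previous paragraph: one must appeal to \autoref{cor:accessible.arrow.categories} twice, once at level $\kappa$ to get accessibility of $\Func{\mathbf{2}}{\mathcal{D}}$ and once at level $\lambda$ to pin down its $\lambda$-presentable objects, and this second application is what forces the hypothesis $\kappa \sharplylt \lambda$ (via Makkai--Paré's observation that accessibility propagates upward along sharp inequalities). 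Everything else is purely formal.
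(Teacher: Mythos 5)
Your proposal is correct and is essentially the paper's own proof, which reads in its entirety ``Combine \autoref{cor:accessible.arrow.categories} and \autoref{thm:classification.of.accessibly.generated.categories}''; you have simply spelled out the details (the identification $\Func{\mathbf{2}}{\mathcal{C}} \simeq \Kompakt[\lambda]{\Func{\mathbf{2}}{\Ind[\lambda]{\mathcal{C}}}}$ via two applications of \autoref{cor:accessible.arrow.categories}, plus the separate $\kappa = \lambda$ case) that the paper leaves implicit. No substantive differences.
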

\begin{proof}
Combine \autoref{cor:accessible.arrow.categories} and \autoref{thm:classification.of.accessibly.generated.categories}.
\end{proof}
\section{Accessible factorisation systems}
\label{sect:accessible.factorisation.systems}

\begin{numpar}
Throughout this section, $\kappa$ is an arbitrary regular cardinal.
\end{numpar}

\begin{lem}
\label{lem:filtered.colimits.in.the.right.class.of.cofibrantly-generated.wfs}
Let $\mathcal{C}$ be a category with colimits of small $\kappa$-filtered diagrams, let $\mathcal{I}$ be a subset of $\mor \mathcal{C}$, and let $\rlpclass{\mathcal{I}}$ be the class of morphisms in $\mathcal{C}$ with the right lifting property with respect to $\mathcal{I}$.
If the domains and codomains of the members of $\mathcal{I}$ are $\kappa$-presentable objects in $\mathcal{C}$, then $\rlpclass{\mathcal{I}}$ (regarded as a full subcategory of $\Func{\mathbf{2}}{\mathcal{C}}$) is closed under colimits of small $\kappa$-filtered diagrams in $\Func{\mathbf{2}}{\mathcal{C}}$.
\end{lem}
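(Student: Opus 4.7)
The plan is to solve lifting problems against the colimit by pulling them back to lifting problems against a single term of the diagram, using $\kappa$-presentability of domains and codomains together with $\kappa$-filteredness.

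Explicitly, let $p : \mathcal{J} \to \Func{\mathbf{2}}{\mathcal{C}}$ be a small $\kappa$-filtered diagram whose vertices $p_j : X_j \to Y_j$ all lie in $\rlpclass{\mathcal{I}}$. Write $x_j : X_j \to X_\infty$ and $y_j : Y_j \to Y_\infty$ for the colimiting cocones and $p_\infty : X_\infty \to Y_\infty$ for the induced morphism. Given $i : A \to B$ in $\mathcal{I}$ together with a commutative square $p_\infty \circ f = g \circ i$ where $f : A \to X_\infty$ and $g : B \to Y_\infty$, I would argue as follows. Since $A$ is $\kappa$-presentable and $X_\infty = \indlim_{\mathcal{J}} X$, the morphism $f$ factors as $f = x_{j_0} \circ f_0$ for some $j_0$ in $\mathcal{J}$. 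Similarly, since $B$ is $\kappa$-presentable and $Y_\infty = \indlim_{\mathcal{J}} Y$, there is a factorisation $g = y_{j_1} \circ g_1$. By $\kappa$-filteredness of $\mathcal{J}$, we may choose morphisms $j_0 \to j$ and $j_1 \to j$ in $\mathcal{J}$ and transport both factorisations to a common index, obtaining $f_j : A \to X_j$ and $g_j : B \to Y_j$ with $f = x_j \circ f_j$ and $g = y_j \circ g_j$.

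Next I would promote the original square to a square over $p_j$. The two morphisms $p_j \circ f_j$ and $g_j \circ i$ from $A$ to $Y_j$ become equal in $Y_\infty$ after post-composing with $y_j$, and again because $A$ is $\kappa$-presentable and $\mathcal{J}$ is $\kappa$-filtered, there is a morphism $j \to j'$ in $\mathcal{J}$ equalising them in $Y_{j'}$. Replacing $f_j$ and $g_j$ by their images $f_{j'}$ and $g_{j'}$ under this morphism, we obtain an honest commutative square
\[
\begin{tikzcd}
A \dar[swap]{i} \rar{f_{j'}} & X_{j'} \dar{p_{j'}} \\
B \rar[swap]{g_{j'}} & Y_{j'}
\end{tikzcd}
\]
in $\mathcal{C}$ with $x_{j'} \circ f_{j'} = f$ and $y_{j'} \circ g_{j'} = g$.

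Since $p_{j'} \in \rlpclass{\mathcal{I}}$ and $i \in \mathcal{I}$, there exists a lift $h_{j'} : B \to X_{j'}$ with $h_{j'} \circ i = f_{j'}$ and $p_{j'} \circ h_{j'} = g_{j'}$. Setting $h = x_{j'} \circ h_{j'}$, we then have $h \circ i = x_{j'} \circ f_{j'} = f$ and $p_\infty \circ h = y_{j'} \circ p_{j'} \circ h_{j'} = y_{j'} \circ g_{j'} = g$, so $h$ solves the original lifting problem and $p_\infty \in \rlpclass{\mathcal{I}}$. The only delicate point is the bookkeeping in the second step — commutativity of the square needs to be witnessed inside the diagram, not merely in the colimit — but this is exactly what $\kappa$-presentability of $A$ together with $\kappa$-filteredness of $\mathcal{J}$ provides.
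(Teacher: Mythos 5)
Your proof is correct and follows essentially the same route as the paper: factor the lifting problem through a finite stage of the diagram, solve it there using the right lifting property of that vertex, and push the lift into the colimit. The only difference is one of packaging — the paper invokes \autoref{prop:presentable.objects.in.diagram.categories} to see that each $i \in \mathcal{I}$ is $\kappa$-presentable as an object of $\Func{\mathbf{2}}{\mathcal{C}}$, so that the whole square factors through a stage in one step, whereas you unroll that fact by hand via the factor-and-equalise argument.
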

\begin{proof}
By \autoref{prop:presentable.objects.in.diagram.categories}, any element of $\mathcal{I}$ is $\kappa$-presentable as an object in $\Func{\mathbf{2}}{\mathcal{C}}$.
Thus, given any morphism $\phi : e \to \indlim_\mathcal{J} f$ in $\Func{\mathbf{2}}{\mathcal{C}}$ where $e$ is in $\mathcal{I}$ and $f : \mathcal{J} \to \Func{\mathbf{2}}{\mathcal{C}}$ is a small $\kappa$-filtered diagram with each vertex in $\rlpclass{\mathcal{I}}$, $\phi$ must factor through $f j \to \indlim_\mathcal{J} f$ for some $j$ in $\mathcal{J}$ (by considering $\indlim_\mathcal{J} \Hom[\Func{\mathbf{2}}{\mathcal{C}}]{e}{f}$) and so we can construct the required lift.
\end{proof}

\begin{lem}
\label{lem:solution.set.condition.and.lifting.conditions}
Let $\mathcal{C}$ be a $\kappa$-accessible category and let $\mathcal{R}$ be a $\kappa$-accessible full subcategory of $\Func{\mathbf{2}}{\mathcal{C}}$.
If $g : Z \to W$ is a morphism in $\mathcal{C}$ where both $Z$ and $W$ are $\kappa$-presentable objects in $\mathcal{C}$, then:
\begin{enumerate}[(i)]
\item Given a morphism $f : X \to Y$ in $\mathcal{C}$ that is in $\mathcal{R}$, any morphism $g \to f$ in $\Func{\mathbf{2}}{\mathcal{C}}$ admits a factorisation of the form $g \to f' \to f$ where $f'$ is in $\Kompakt[\kappa]{\mathcal{R}}$.

\item The morphism $g : Z \to W$ has the left lifting property with respect to $\mathcal{R}$ if and only if it has the left lifting property with respect to $\Kompakt[\kappa]{\mathcal{R}}$.
\end{enumerate}
\end{lem}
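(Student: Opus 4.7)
The plan is to derive (ii) from (i) in a straightforward way, so the work is all in (i). For (i), the key observation is that $g \colon Z \to W$, being a morphism between $\kappa$-presentable objects in $\mathcal{C}$, is itself a $\kappa$-presentable object in $\Func{\mathbf{2}}{\mathcal{C}}$ by \autoref{cor:accessible.arrow.categories} (or by invoking \autoref{prop:presentable.objects.in.diagram.categories}(i) with $\mathcal{D} = \mathbf{2}$). Meanwhile, since $\mathcal{R}$ is a $\kappa$-accessible subcategory of $\Func{\mathbf{2}}{\mathcal{C}}$, the inclusion $\mathcal{R} \embedinto \Func{\mathbf{2}}{\mathcal{C}}$ is $\kappa$-accessible, so it preserves colimits of small $\kappa$-filtered diagrams.

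Given $f \in \mathcal{R}$, since $\mathcal{R}$ is a $\kappa$-accessible category we may write $f \cong \indlim_\mathcal{J} f'$ for some small $\kappa$-filtered diagram $f' \colon \mathcal{J} \to \mathcal{R}$ whose vertices lie in $\Kompakt[\kappa]{\mathcal{R}}$, and this colimit is computed in $\Func{\mathbf{2}}{\mathcal{C}}$ as well. Then
\[
\Hom[{\Func{\mathbf{2}}{\mathcal{C}}}]{g}{f} \cong \indlim_\mathcal{J} \Hom[{\Func{\mathbf{2}}{\mathcal{C}}}]{g}{f'}
\]
because $g$ is $\kappa$-presentable in $\Func{\mathbf{2}}{\mathcal{C}}$. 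Any given morphism $g \to f$ in $\Func{\mathbf{2}}{\mathcal{C}}$ must therefore factor through some colimiting cocone component $f'_j \to f$ with $f'_j \in \Kompakt[\kappa]{\mathcal{R}}$, establishing (i).

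For (ii), the forward implication is immediate from $\Kompakt[\kappa]{\mathcal{R}} \subseteq \mathcal{R}$. For the converse, suppose $g$ has the left lifting property with respect to $\Kompakt[\kappa]{\mathcal{R}}$ and let $f \in \mathcal{R}$ together with a commutative square $g \to f$ be given. By (i), this square factors as $g \to f' \to f$ in $\Func{\mathbf{2}}{\mathcal{C}}$ with $f' \in \Kompakt[\kappa]{\mathcal{R}}$; the hypothesised lift $W \to \dom f'$ for the inner square, post-composed with $\dom f' \to \dom f$, provides the required lift for the outer square.

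I do not expect a serious obstacle: the only subtle point is confirming that the $\kappa$-filtered colimit computed in $\mathcal{R}$ agrees with the one in $\Func{\mathbf{2}}{\mathcal{C}}$, which is exactly the content of the inclusion $\mathcal{R} \embedinto \Func{\mathbf{2}}{\mathcal{C}}$ being a $\kappa$-accessible functor, and verifying that $g$ is $\kappa$-presentable in the arrow category, which was arranged by the hypothesis on $Z$ and $W$.
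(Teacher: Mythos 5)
Your proposal is correct and follows essentially the same route as the paper: both establish (i) by noting that $g$ is $\kappa$-presentable in $\Func{\mathbf{2}}{\mathcal{C}}$ (via \autoref{prop:presentable.objects.in.diagram.categories}), writing $f$ as a small $\kappa$-filtered colimit of objects of $\Kompakt[\kappa]{\mathcal{R}}$ preserved by the $\kappa$-accessible inclusion $\mathcal{R} \embedinto \Func{\mathbf{2}}{\mathcal{C}}$, and factoring through a cocone component; and both derive (ii) from (i) by composing the lift against the inner square with the comparison morphism. No gaps.
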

\begin{proof}
(i).
\Autoref{prop:presentable.objects.in.diagram.categories} says that $g$ is a $\kappa$-presentable object in $\Func{\mathbf{2}}{\mathcal{C}}$; but every object in $\mathcal{R}$ is the colimit of a small $\kappa$-filtered diagram of $\kappa$-presentable objects in $\mathcal{R}$, and the inclusion $\mathcal{R} \embedinto \Func{\mathbf{2}}{\mathcal{C}}$ is $\kappa$-accessible, so any morphism $g \to f$ must factor through some $\kappa$-presentable object in $\mathcal{R}$.

\medskip\noindent
(ii).
If $g$ has the left lifting property with respect to $\mathcal{R}$, then it certainly has the left lifting property with respect to $\Kompakt[\kappa]{\mathcal{R}}$.
Conversely, by factorising morphisms $g \to f$ as in (i), we see that $g$ has the left lifting property with respect to $\mathcal{R}$ as soon as it has the left lifting property with respect to $\Kompakt[\kappa]{\mathcal{R}}$.
\end{proof}

\begin{thm}[Quillen's small object argument]
\label{thm:Quillen.small.object.argument}
Let $\kappa$ be a regular cardinal, let $\mathcal{C}$ be a locally $\kappa$-presentable category, and let $\mathcal{I}$ be a small subset of $\mor \mathcal{C}$.
\begin{enumerate}[(i)]
\item There exists a functorial weak factorisation system $\tuple{L, R}$ on $\mathcal{C}$ whose right class is $\rlpclass{\mathcal{I}}$; in particular, there is a weak factorisation system on $\mathcal{C}$ cofibrantly generated by $\mathcal{I}$.

\item If the morphisms that are in $\mathcal{I}$ are $\kappa$-presentable as objects in $\Func{\mathbf{2}}{\mathcal{C}}$, then $\tuple{L, R}$ can be chosen so that the functors $L, R : \Func{\mathbf{2}}{\mathcal{C}} \to \Func{\mathbf{2}}{\mathcal{C}}$ are $\kappa$-accessible.

\item In addition, if $\lambda$ is a regular cardinal such that every hom-set of $\Kompakt[\kappa]{\mathcal{C}}$ is $\lambda$-small, $\mathcal{I}$ is $\lambda$-small, and $\kappa \sharplylt \lambda$, then $\tuple{L, R}$ can be chosen so that the functors $L, R : \Func{\mathbf{2}}{\mathcal{C}} \to \Func{\mathbf{2}}{\mathcal{C}}$ preserve $\lambda$-presentable objects.
\end{enumerate}
\end{thm}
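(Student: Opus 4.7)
The plan is to carry out Quillen's small object argument in the language of pointed endofunctors, so that the accessibility claims in (ii) and (iii) follow directly from \autoref{thm:accessible.pointed.endofunctors} and \autoref{thm:strongly.accessible.pointed.endofunctors} applied to a suitable pointed endofunctor on $\Func{\mathbf{2}}{\mathcal{C}}$. Concretely, for each $f : X \to Y$ in $\mathcal{C}$, let $S_f$ be the set of all pairs $\tuple{i, \tuple{u, v}}$ where $i \in \mathcal{I}$ and $\tuple{u, v} : i \to f$ is a commutative square, and form the pushout of $\coprod_{S_f} \dom i \to \coprod_{S_f} \codom i$ along the canonical map $\coprod_{S_f} \dom i \to X$ to get $X \to X'$; let $J\argp{f}$ be the induced morphism $X' \to Y$ and let $\iota_f : f \to J\argp{f}$ be given by the pushout inclusion on domains and $\id_Y$ on codomains. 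Since each $i \in \mathcal{I}$ is $\kappa$-presentable in $\Func{\mathbf{2}}{\mathcal{C}}$ (by \autoref{prop:presentable.objects.in.diagram.categories}), the set $S_{\blank}$ depends $\kappa$-accessibly on $f$, and since pushouts and coproducts are colimits, $J$ is a $\kappa$-accessible endofunctor on the locally $\kappa$-presentable category $\Func{\mathbf{2}}{\mathcal{C}}$. Applying \autoref{thm:free.algebra.sequence} yields a left adjoint $F$ to the forgetful $U : \Func{\mathbf{2}}{\mathcal{C}}^{\tuple{J, \iota}} \to \Func{\mathbf{2}}{\mathcal{C}}$, constructed as the $\kappa$-th stage of a transfinite chain $f_0 \to f_1 \to \cdots$ with $f_\alpha : X_\alpha \to Y$; define $L\argp{f}$ to be $X \to X_\kappa$ and $R\argp{f} = U F\argp{f}$ to be $X_\kappa \to Y$.

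By construction, $R\argp{f} \circ L\argp{f} = f$, and $L\argp{f}$ is a transfinite composite of pushouts of coproducts of elements of $\mathcal{I}$, hence lies in $\llpclass{(\rlpclass{\mathcal{I}})}$. To see $R\argp{f} \in \rlpclass{\mathcal{I}}$, note that the $\tuple{J, \iota}$-algebra structure on $R\argp{f}$ is a retraction $\alpha : J\argp{R\argp{f}} \to R\argp{f}$; any lifting problem $\tuple{u, v} : i \to R\argp{f}$ contributes to the pushout defining $J\argp{R\argp{f}}$, and the composite of $\codom i \to X_\kappa'$ with the domain component of $\alpha$ gives the required diagonal filler. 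Functoriality of the construction is inherited from the monad $T = U F$ and its unit $\eta : \id \hoto T$, so this establishes (i).

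For (ii), $J$ is $\kappa$-accessible, so \autoref{thm:accessible.pointed.endofunctors}(i) implies $U$ creates small $\kappa$-filtered colimits and hence $T$ is $\kappa$-accessible; the functors $L$ and $R$, extracted from $T$ and $\eta$, are therefore $\kappa$-accessible too. For (iii), it suffices to upgrade $J$ to a strongly $\lambda$-accessible functor and invoke \autoref{thm:strongly.accessible.pointed.endofunctors} (valid since $\kappa < \lambda$). Given a $\lambda$-presentable $f : X \to Y$ in $\Func{\mathbf{2}}{\mathcal{C}}$, \autoref{prop:presentable.objects.in.diagram.categories} says $X$ and $Y$ are $\lambda$-presentable in $\mathcal{C}$; by \autoref{lem:estimate.of.morphisms.in.accessible.categories} (using $\kappa \sharplylt \lambda$ and the hypothesis that hom-sets of $\Kompakt[\kappa]{\mathcal{C}}$ are $\lambda$-small) each $\Hom[\mathcal{C}]{\dom i}{X}$ and $\Hom[\mathcal{C}]{\codom i}{Y}$ has cardinality $< \lambda$, so $\card{S_f} < \lambda$ since $\mathcal{I}$ is also $\lambda$-small. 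Then \autoref{lem:small.objects} makes the coproducts and the pushout $\lambda$-presentable, so $J\argp{f}$ is $\lambda$-presentable, as required. The main obstacle is precisely this bookkeeping: controlling $\card{S_f}$ is where all four hypotheses of (iii) (local $\kappa$-presentability of $\mathcal{M}$, $\lambda$-smallness of $\mathcal{I}$, $\lambda$-smallness of hom-sets between $\kappa$-presentables, and $\kappa \sharplylt \lambda$) are simultaneously used; once this is done, \autoref{thm:strongly.accessible.pointed.endofunctors} produces strong $\lambda$-accessibility of $T$ and hence of $L, R$.
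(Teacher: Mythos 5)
Your construction is correct and is essentially the proof the paper intends: the paper disposes of (i) by citation and of (ii)--(iii) by the instruction to ``trace the construction of $L$ and $R$'' and apply \autoref{lem:small.objects} and \autoref{lem:estimate.of.morphisms.in.accessible.categories}, which is exactly what you do, packaged via the paper's own pointed-endofunctor machinery (\autoref{thm:free.algebra.sequence}, \autoref{thm:accessible.pointed.endofunctors}, \autoref{thm:strongly.accessible.pointed.endofunctors}). One small imprecision: in part (i) the set $\mathcal{I}$ carries no presentability hypothesis, so the domains and codomains of its members need not be $\kappa$-presentable and your appeal to \autoref{prop:presentable.objects.in.diagram.categories} at that stage is unjustified as written; you should instead pick a regular cardinal $\mu \ge \kappa$ large enough that all these objects are $\mu$-presentable (possible since $\mathcal{I}$ is small and $\mathcal{C}$ is locally presentable) and run the free-algebra sequence to length $\mu$, reserving the $\kappa$-indexed argument for parts (ii) and (iii) where the hypothesis is available.
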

\begin{proof} \openproof
(i). See \eg Proposition 10.5.16 in \citep{Hirschhorn:2003}.

\medskip\noindent
(ii) and (iii). These claims can be verified by tracing the construction of $L$ and $R$ and applying lemmas \ref{lem:small.objects} and \ref{lem:estimate.of.morphisms.in.accessible.categories}.
\end{proof}

\begin{remark}
The algebraically free natural weak factorisation system produced by Garner's small object argument \citep{Garner:2009} satisfy claims (ii) and (iii) of the above theorem (under the same hypotheses). The proof is somewhat more straightforward, because the right half of the resulting algebraic factorisation system can be described in terms of a certain density comonad.
\end{remark}

\begin{prop}
\label{prop:accessibility.of.right.class.of.accessible.fwfs}
Let $\mathcal{C}$ be a locally presentable category, let $\tuple{L, R}$ be a functorial weak factorisation system on $\mathcal{C}$, and let $\lambda : \id_{\Func{\mathbf{2}}{\mathcal{C}}} \hoto R$ be the natural transformation whose component at an object $f$ in $\Func{\mathbf{2}}{\mathcal{C}}$ corresponds to the following commutative square in $\mathcal{C}$:
\[
\begin{tikzcd}
\bullet \dar[swap]{f} \rar{L f} &
\bullet \dar{R f} \\
\bullet \rar[equals] &
\bullet
\end{tikzcd}
\]
Let $\mathcal{R}$ be the full subcategory of $\Func{\mathbf{2}}{\mathcal{C}}$ spanned by the morphisms in $\mathcal{C}$ that are in the right class of the induced weak factorisation system.
\begin{enumerate}[(i)]
\item $\mathcal{R}$ is also the full subcategory of $\Func{\mathbf{2}}{\mathcal{C}}$ spanned by the image of the forgetful functor $\Func{\mathbf{2}}{\mathcal{C}}^{\tuple{R, \lambda}} \to \Func{\mathbf{2}}{\mathcal{C}}$, where $\Func{\mathbf{2}}{\mathcal{C}}^{\tuple{R, \lambda}}$ is the category of algebras for the pointed endofunctor $\tuple{R, \lambda}$.

\item If $R : \Func{\mathbf{2}}{\mathcal{C}} \to \Func{\mathbf{2}}{\mathcal{C}}$ is an accessible functor, then $\Func{\mathbf{2}}{\mathcal{C}}^{\tuple{R, \lambda}}$ is a locally presentable category, and the forgetful functor $\Func{\mathbf{2}}{\mathcal{C}}^{\tuple{R, \lambda}} \to \Func{\mathbf{2}}{\mathcal{C}}$ is monadic.

\item If $R : \Func{\mathbf{2}}{\mathcal{C}} \to \Func{\mathbf{2}}{\mathcal{C}}$ is strongly $\pi$-accessible and preserves colimits of $\kappa$-filtered diagrams, where $\kappa < \pi$, and $\mathcal{R}$ is closed under colimits of small $\pi$-filtered diagrams in $\Func{\mathbf{2}}{\mathcal{C}}$, then $\mathcal{R}$ is a $\pi$-accessible subcategory of $\Func{\mathbf{2}}{\mathcal{C}}$.
\end{enumerate}
\end{prop}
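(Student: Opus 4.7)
The plan is to dispatch the three claims in turn, each time leveraging the groundwork on accessible pointed endofunctors developed in the previous section.

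For (i), I would argue that $f$ lies in $\mathcal{R}$ precisely when it carries a $\tuple{R, \lambda}$-algebra structure, via the standard retract characterisation of right-class membership in a functorially generated weak factorisation system. Unpacking the shape of $\lambda_f$ shows that an algebra structure $\alpha : Rf \to f$ amounts to a morphism $s$ from the intermediate object of the factorisation $f = Rf \circ Lf$ back to the domain of $f$ with $s \circ Lf = \id$ and $f \circ s = Rf$; the algebra law $\alpha \circ \lambda_f = \id_f$ forces the codomain component of $\alpha$ to be the identity. Such an $s$ is precisely a diagonal fill in the square expressing $f = Rf \circ Lf$, so its existence is equivalent to $Lf \llpwrt f$. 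Since $Rf$ always belongs to the right class, the retract-closure of the right class then shows that $f$ does as well; conversely, if $f$ is in $\mathcal{R}$ then $Lf \llpwrt f$ supplies the algebra structure.

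For (ii), I would first note that $\Func{\mathbf{2}}{\mathcal{C}}$ is locally presentable by \autoref{prop:locally.presentable.functor.categories} (with $\mathcal{D} = \mathbf{2}$), hence complete and cocomplete. The hypothesis that $R$ is accessible then lets me invoke \autoref{thm:accessible.pointed.endofunctors}: part~(ii) yields accessibility of $\Func{\mathbf{2}}{\mathcal{C}}^{\tuple{R, \lambda}}$, while part~(i) says that the forgetful functor creates all small limits, so the category of algebras is accessible and complete, hence locally presentable. For monadicity, part~(iii) of the same theorem applies, since $\Func{\mathbf{2}}{\mathcal{C}}$ has all small colimits and $R$, being accessible, preserves colimits of $\kappa$-chains for some regular cardinal $\kappa$.

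For (iii), the strengthened hypotheses (strong $\pi$-accessibility of $R$, together with preservation of $\kappa$-filtered colimits for some $\kappa < \pi$) are exactly what is needed to invoke \autoref{thm:strongly.accessible.pointed.endofunctors}, which delivers that $\Func{\mathbf{2}}{\mathcal{C}}^{\tuple{R, \lambda}}$ is locally $\pi$-presentable and that the forgetful functor $U : \Func{\mathbf{2}}{\mathcal{C}}^{\tuple{R, \lambda}} \to \Func{\mathbf{2}}{\mathcal{C}}$ is strongly $\pi$-accessible. By (i), the image of $U$ is exactly $\mathcal{R}$, and the standing hypothesis that $\mathcal{R}$ is closed under $\pi$-filtered colimits in $\Func{\mathbf{2}}{\mathcal{C}}$ is precisely what permits an appeal to \autoref{prop:image.of.strongly.accessible.functors}(iii), concluding that $\mathcal{R}$ is a $\pi$-accessible subcategory.

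The main subtlety, to the extent that there is one, lies in (i): one must verify on the nose that a $\tuple{R, \lambda}$-algebra structure and a retract-of-$Rf$ structure carry the same information, including the observation that the algebra law forces the codomain component of $\alpha$ to be the identity. Once (i) is in hand, (ii) and (iii) reduce to citations of the accessibility theorems already established, together with the generalities about locally presentable categories and images of strongly accessible functors.
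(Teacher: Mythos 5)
Your proposal is correct and follows essentially the same route as the paper: part (i) is exactly the content of \autoref{prop:algebras.and.coalgebras.for.fwfs} (which you re-derive inline via the retract argument rather than citing), and parts (ii) and (iii) are the same appeals to \autoref{thm:accessible.pointed.endofunctors}, \autoref{thm:strongly.accessible.pointed.endofunctors}, and \autoref{prop:image.of.strongly.accessible.functors} that the paper makes.
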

\begin{proof}
(i).
This is a special case of \autoref{prop:algebras.and.coalgebras.for.fwfs}.

\medskip\noindent
(ii).
Apply \autoref{thm:accessible.pointed.endofunctors}.

\medskip\noindent
(iii).
By \autoref{thm:strongly.accessible.pointed.endofunctors}, $\Func{\mathbf{2}}{\mathcal{C}}^{\tuple{R, \lambda}}$ is a locally $\pi$-presentable category, and the forgetful functor $\Func{\mathbf{2}}{\mathcal{C}}^{\tuple{R, \lambda}} \to \Func{\mathbf{2}}{\mathcal{C}}$ is moreover strongly $\pi$-accessible.
Thus, we may apply \autoref{prop:image.of.strongly.accessible.functors} to (i) and deduce that $\mathcal{R}$ is a $\pi$-accessible subcategory.
\end{proof}

\begin{prop}
\label{prop:accessibility.of.right.class.of.cofibrantly-generated.wfs}
Let $\mathcal{C}$ be a locally presentable category, and let $\mathcal{I}$ be a subset of $\mor \mathcal{C}$.
Then $\rlpclass{\mathcal{I}}$, considered as a full subcategory of $\Func{\mathbf{2}}{\mathcal{C}}$, is an accessible subcategory.
\end{prop}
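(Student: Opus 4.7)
The plan is to combine Quillen's small object argument (\autoref{thm:Quillen.small.object.argument}) with \autoref{prop:accessibility.of.right.class.of.accessible.fwfs}(iii). First, I would choose a regular cardinal $\kappa$ such that $\mathcal{C}$ is locally $\kappa$-presentable and every morphism in $\mathcal{I}$ has both its domain and its codomain $\kappa$-presentable in $\mathcal{C}$; such a $\kappa$ exists because $\mathcal{I}$ is a small set and every object in a locally presentable category is presentable for some regular cardinal. By \autoref{cor:accessible.arrow.categories}, the morphisms in $\mathcal{I}$ are then $\kappa$-presentable as objects in $\Func{\mathbf{2}}{\mathcal{C}}$, so \autoref{thm:Quillen.small.object.argument}(i) and (ii) together supply a functorial weak factorisation system $\tuple{L, R}$ on $\mathcal{C}$ whose right class equals $\rlpclass{\mathcal{I}}$ and such that the functor $R : \Func{\mathbf{2}}{\mathcal{C}} \to \Func{\mathbf{2}}{\mathcal{C}}$ is $\kappa$-accessible.

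Next, I would invoke \autoref{lem:accessible.functors.are.strongly.accessible}(i) to produce a regular cardinal $\pi$ with $\pi > \kappa$ for which $R$ is moreover strongly $\pi$-accessible. Since $R$ is $\kappa$-accessible, it preserves colimits of small $\kappa$-filtered diagrams, and by construction $\kappa < \pi$. Simultaneously, by \autoref{lem:filtered.colimits.in.the.right.class.of.cofibrantly-generated.wfs} the full subcategory $\rlpclass{\mathcal{I}} \subseteq \Func{\mathbf{2}}{\mathcal{C}}$ is closed under colimits of small $\kappa$-filtered diagrams; since every $\pi$-filtered category is $\kappa$-filtered (as $\kappa \le \pi$), $\rlpclass{\mathcal{I}}$ is \emph{a fortiori} closed under colimits of small $\pi$-filtered diagrams in $\Func{\mathbf{2}}{\mathcal{C}}$.

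With these ingredients in place, all three hypotheses of \autoref{prop:accessibility.of.right.class.of.accessible.fwfs}(iii) are verified for the chosen $\tuple{L, R}$ and the pair $\kappa < \pi$, yielding the conclusion that $\rlpclass{\mathcal{I}}$ is a $\pi$-accessible subcategory of $\Func{\mathbf{2}}{\mathcal{C}}$. I do not foresee a genuine technical obstacle: the entire argument is a careful bookkeeping of cardinals, with the only delicate point being to arrange the strict inequality $\kappa < \pi$ required by \autoref{prop:accessibility.of.right.class.of.accessible.fwfs}(iii), which is handled by the flexibility allowed in \autoref{lem:accessible.functors.are.strongly.accessible}(i).
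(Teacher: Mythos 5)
Your proposal is correct and follows essentially the same route as the paper, whose proof is simply to combine \autoref{thm:Quillen.small.object.argument} with \autoref{prop:accessibility.of.right.class.of.accessible.fwfs}; you have merely made the cardinal bookkeeping explicit. The only quibble is that the freedom to enlarge the cardinal so that $\kappa < \pi$ really comes from \autoref{lem:accessible.functors.are.strongly.accessible}(ii) (together with the existence of arbitrarily large $\mu$ with $\kappa \sharplylt \mu$) rather than from part (i) alone, but this does not affect the argument.
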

\begin{proof}
Combine \autoref{thm:Quillen.small.object.argument} and \autoref{prop:accessibility.of.right.class.of.accessible.fwfs}.
\end{proof}

\section{Strongly combinatorial model categories}
\label{sect:strongly.combinatorial.model.categories}

To apply the results of the previous section to the theory of combinatorial model categories, it is useful to collect some convenient hypotheses together as a definition:

\begin{dfn}
Let $\kappa$ and $\lambda$ be regular cardinals.
A \strong{strongly $\tuple{\kappa, \lambda}$-\hspace{0pt}combinatorial model category} is a combinatorial model category $\mathcal{M}$ that satisfies these axioms:
\begin{itemize}
\item $\mathcal{M}$ is a locally $\kappa$-presentable category, and $\kappa \sharplylt \lambda$.

\item $\Kompakt[\lambda]{\mathcal{M}}$ is closed under finite limits in $\mathcal{M}$.

\item Each hom-set in $\Kompakt[\kappa]{\mathcal{M}}$ is $\lambda$-small.

\item There exist $\lambda$-small sets of morphisms in $\Kompakt[\kappa]{\mathcal{M}}$ that cofibrantly generate the model structure of $\mathcal{M}$.
\end{itemize}
\end{dfn}

\begin{remark}
Let $\mathcal{M}$ be a strongly $\tuple{\kappa, \lambda}$-combinatorial model category and let $\lambda \sharplylt \mu$.
Then $\kappa \sharplylt \mu$, so by \autoref{lem:accessible.functors.are.strongly.accessible}, $\Kompakt[\mu]{\mathcal{M}}$ is also closed under finite limits.
Hence, $\mathcal{M}$ is also a strongly $\tuple{\kappa, \mu}$-combinatorial model category.
\end{remark}

\begin{example}
Let $\cat{\SSet}$ be the category of simplicial sets.
$\cat{\SSet}$, equipped with the  Kan--Quillen model structure, is a strongly $\tuple{\aleph_0, \aleph_1}$-combinatorial model category.
\end{example}

\begin{example}
Let $R$ be a ring, let $\cat{\Chcx{R}}$ be the category of unbounded chain complexes of left $R$-modules, and let $\lambda$ be an uncountable regular cardinal such that $R$ is $\lambda$-small (as a set).
\begin{itemize}
\item It is not hard to verify that $\cat{\Chcx{R}}$ is a locally $\aleph_0$-presentable category where the $\aleph_0$-presentable objects are the bounded chain complexes of finitely presented left $R$-modules.

\item The $\lambda$-presentable objects are precisely the chain complexes $M_{\bullet}$ such that $\sum_{n \in \mathbb{Z}} \card{M_n} < \lambda$, so the full subcategory of $\lambda$-presentable objects is closed under finite limits.

\item By considering matrices over $R$, we may deduce that the set of chain maps between any two $\aleph_0$-presentable objects in $\cat{\Chcx{R}}$ is $\lambda$-small.

\item The cofibrations in the projective model structure on $\cat{\Chcx{R}}$ are generated by a countable set of chain maps between $\aleph_0$-presentable chain complexes, as are the trivial cofibrations.

\end{itemize}
Thus, $\cat{\Chcx{R}}$ is a strongly $\tuple{\aleph_0, \lambda}$-combinatorial model category.
\end{example}

\begin{example}
Let $\cat{\SymSp}$ be the category of symmetric spectra of \citet{HSS:2000} and let $\lambda$ be a regular cardinal such that $\aleph_1 \sharplylt \lambda$ and $2^{\aleph_0} < \lambda$.
(Such a cardinal exists: for instance, we may take $\lambda$ to be the cardinal successor of $2^{2^{\aleph_0}}$; or, assuming the continuum hypothesis, we may take $\lambda = \aleph_2$.)
\begin{itemize}
\item The category of pointed simplicial sets, $\cat{\SSet_*}$, is locally $\aleph_0$-presentable; hence, so is the category $\Func{\mbfSigma}{\cat{\SSet_*}}$ of symmetric sequences of pointed simplicial sets, by \autoref{prop:locally.presentable.functor.categories}.
There is a symmetric monoidal closed structure on $\Func{\mbfSigma}{\cat{\SSet_*}}$ such that $\cat{\SymSp}$ is equivalent to the category of $S$-modules, where $S$ is (the underlying symmetric sequence of) the symmetric sphere spectrum defined in Example 1.2.4 in \opcit; thus, $\cat{\SymSp}$ is the category of algebras for an $\aleph_0$-accessible monad, hence is itself is a locally $\aleph_0$-presentable category.

\item Since (the underlying symmetric sequence of) $S$ is an $\aleph_1$-presentable object in $\Func{\mbfSigma}{\cat{\SSet}_*}$, we can apply \autoref{prop:presentable.objects.in.diagram.categories} and  \autoref{thm:strongly.accessible.monads} to deduce that the $\aleph_1$-presentable objects in $\cat{\SymSp}$ are precisely the ones whose underlying symmetric sequence consists of countable simplicial sets.
Hence, $\Kompakt[\aleph_1]{\cat{\SymSp}}$ is closed under finite limits, and the same is true for $\Kompakt[\lambda]{\cat{\SymSp}}$ because $\aleph_1 \sharplylt \lambda$.

\item It is clear that there are $\le 2^{\aleph_0}$ morphisms between two $\aleph_1$-presentable symmetric sequences; in particular, there are $< \lambda$ morphisms between two $\aleph_1$-presentable symmetric spectra.

\item The functor $\pblank_n : \cat{\SymSp} \to \cat{\SSet}$ that sends a symmetric spectrum $X$ to the simplicial set $X_n$ preserves filtered colimits, so by \autoref{prop:left.adjoints.and.strong.accessibility}, its left adjoint $F_n : \cat{\SSet} \to \cat{\SymSp}$ preserves $\aleph_0$-presentability.
Thus, the set of generating cofibrations for the stable model structure on $\cat{\SymSp}$ given by Proposition~3.4.2 in \opcit is a countable set of morphisms between $\aleph_0$-presentable symmetric spectra.

Using the fact that the mapping cylinder of a morphism between two $\aleph_1$-presentable symmetric spectra is also an $\aleph_1$-presentable symmetric spectrum, we deduce that the set of generating trivial cofibrations given in Definition~3.4.9 in \opcit is a countable set of morphisms between $\aleph_1$-presentable symmetric spectra.
\end{itemize}
We therefore conclude that $\cat{\SymSp}$ is a strongly $\tuple{\aleph_1, \lambda}$-combinatorial model category.
\end{example}

\begin{prop}
\label{prop:combinatorial.model.categories.are.strongly.combinatorial}
For any combinatorial model category $\mathcal{M}$, there exist regular cardinals $\kappa$ and $\lambda$ such that $\mathcal{M}$ is a strongly $\tuple{\kappa, \lambda}$-combinatorial model category.
\end{prop}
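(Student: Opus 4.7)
The plan is to extract a witness $\kappa$ from the combinatorial data of $\mathcal{M}$ and then find a sufficiently large regular cardinal $\lambda$ satisfying all four axioms of strongly $\tuple{\kappa, \lambda}$-combinatorial.

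Since $\mathcal{M}$ is combinatorial, I fix small generating sets $\mathcal{I}$ and $\mathcal{I}'$ of cofibrations and trivial cofibrations, and choose a regular cardinal $\kappa$ such that $\mathcal{M}$ is locally $\kappa$-presentable and every morphism in $\mathcal{I} \cup \mathcal{I}'$ has $\kappa$-presentable domain and codomain. This is possible because $\mathcal{I} \cup \mathcal{I}'$ is a small set of morphisms between presentable objects and admits a common bound on the index of presentability.

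Next I address closure of $\Kompakt[\lambda]{\mathcal{M}}$ under finite limits by reducing to three generating shapes: the empty diagram, the discrete two-point diagram, and the parallel pair. For each such finite $\mathcal{D}$, by \autoref{prop:locally.presentable.functor.categories} the functor category $\Func{\mathcal{D}}{\mathcal{M}}$ is locally $\kappa$-presentable, and the limit functor $\prolim_{\mathcal{D}} : \Func{\mathcal{D}}{\mathcal{M}} \to \mathcal{M}$ is a right adjoint between locally presentable categories, hence accessible. Let $\kappa'$ be a regular cardinal at which all three limit functors are $\kappa'$-accessible; by \autoref{lem:accessible.functors.are.strongly.accessible}(i) there is $\pi \geq \kappa'$ at which all three are strongly $\pi$-accessible. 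I also choose a regular cardinal $\nu$ exceeding the cardinalities of all hom-sets in the essentially small category $\Kompakt[\kappa]{\mathcal{M}}$ and of the sets $\mathcal{I}$ and $\mathcal{I}'$.

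Now I invoke the existence of a regular cardinal $\lambda$ with $\lambda \geq \max\{\pi, \nu\}$, $\kappa \sharplylt \lambda$, and $\kappa' \sharplylt \lambda$; such $\lambda$ exists by Proposition 2.3.2 of \citep{Makkai-Pare:1989}. For this $\lambda$ the first, third, and fourth axioms are immediate from the choices above, and for the second I apply \autoref{lem:accessible.functors.are.strongly.accessible}(ii) to each of the three limit functors: combined with \autoref{prop:presentable.objects.in.diagram.categories}(i), which says that a finite diagram of $\lambda$-presentable objects is $\lambda$-presentable in the functor category, this shows that terminal, binary product, and equalizer limits of $\lambda$-presentable objects in $\mathcal{M}$ are again $\lambda$-presentable, so $\Kompakt[\lambda]{\mathcal{M}}$ is closed under all finite limits. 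The main subtlety lies in the bookkeeping of indices of accessibility and sharply-less-than relations; once $\kappa'$, $\pi$, and $\nu$ have been identified, extracting a single $\lambda$ that simultaneously dominates them and stands in the required sharp relations is routine.
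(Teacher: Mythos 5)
Your argument is correct and is essentially the paper's own proof with the details written out: the paper simply remarks that, in view of \autoref{lem:accessible.functors.are.strongly.accessible}, everything reduces to the existence of arbitrarily large regular cardinals $\lambda$ with $\kappa \sharplylt \lambda$, which is precisely the bookkeeping you perform for the finite-limit functors, the hom-set bounds, and the generating sets. The only quibble is bibliographic: the existence of arbitrarily large $\lambda$ with $\kappa \sharplylt \lambda$ is Corollary~2.3.6 of \citep{Makkai-Pare:1989} (or Example~2.13\hairkern(6) of \citep{LPAC}), not Proposition~2.3.2, which concerns a different property of the relation $\sharplylt$.
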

\begin{proof}
In view of \autoref{lem:accessible.functors.are.strongly.accessible}, this reduces to the fact that there are arbitrarily large $\lambda$ such that $\kappa \sharplylt \lambda$.
\unskip\footnote{See Corollary~2.3.6 in \citep{Makkai-Pare:1989}, or Example 2.13\hairkern (6) in \citep{LPAC}, or Corollary~5.4.8 in \citep{Borceux:1994b}.}
\end{proof}

\begin{prop}
\label{prop:accessibility.of.fibrations.and.trivial.fibrations}
Let $\mathcal{M}$ be a strongly $\tuple{\kappa, \lambda}$-combinatorial model category.
\begin{enumerate}[(i)]
\item There exist (trivial cofibration, fibration)- and (cofibration, trivial fi\-bra\-tion)-\allowhyphens factorisation functors that are $\kappa$-accessible and strongly $\lambda$-accessible.

\item Let $\mathcal{F}$ (\resp $\mathcal{F}'$) be the full subcategory of $\Func{\mathbf{2}}{\mathcal{M}}$ spanned by the fibrations (\resp trivial fibrations).
Then $\mathcal{F}$ and $\mathcal{F}'$ are closed under colimits of small $\kappa$-filtered diagrams in $\Func{\mathbf{2}}{\mathcal{M}}$.
\end{enumerate}
\end{prop}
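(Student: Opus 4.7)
The plan is to apply Quillen's small object argument (\autoref{thm:Quillen.small.object.argument}) to the generating sets supplied by the strong combinatoriality hypothesis. By assumption, $\mathcal{M}$ is equipped with $\lambda$-small sets $\mathcal{I}$ and $\mathcal{I}'$ of morphisms in $\Kompakt[\kappa]{\mathcal{M}}$ that cofibrantly generate the cofibrations and the trivial cofibrations respectively; accordingly, the fibrations coincide with $\rlpclass{\mathcal{I}'}$ and the trivial fibrations with $\rlpclass{\mathcal{I}}$.

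For part~(i), I would first observe that every element of $\mathcal{I}$ (\resp $\mathcal{I}'$) is a morphism between $\kappa$-presentable objects in $\mathcal{M}$, so by \autoref{cor:accessible.arrow.categories} it is a $\kappa$-presentable object of $\Func{\mathbf{2}}{\mathcal{M}}$. This verifies the hypothesis of clause~(ii) of \autoref{thm:Quillen.small.object.argument}, yielding functorial factorisations whose functors $L, R$ are $\kappa$-accessible. To upgrade to strong $\lambda$-accessibility, I would invoke clause~(iii) of the same theorem: the remaining axioms of a strongly $\tuple{\kappa, \lambda}$-combinatorial model category \dash namely $\kappa \sharplylt \lambda$, $\lambda$-smallness of hom-sets in $\Kompakt[\kappa]{\mathcal{M}}$, and $\lambda$-smallness of the generating sets \dash are precisely what (iii) requires, so $L, R$ may also be chosen to send $\lambda$-presentable objects to $\lambda$-presentable objects. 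Since $\kappa \sharplylt \lambda$, the category $\Func{\mathbf{2}}{\mathcal{M}}$ (which is $\kappa$-accessible) is also $\lambda$-accessible, and any functor preserving $\kappa$-filtered colimits preserves $\lambda$-filtered colimits; combined with preservation of $\lambda$-presentable objects, this gives strong $\lambda$-accessibility.

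Part~(ii) is then an immediate application of \autoref{lem:filtered.colimits.in.the.right.class.of.cofibrantly-generated.wfs}: the domains and codomains of the elements of $\mathcal{I}$ and $\mathcal{I}'$ lie in $\Kompakt[\kappa]{\mathcal{M}}$, which is exactly the hypothesis of that lemma, so $\mathcal{F} = \rlpclass{\mathcal{I}'}$ and $\mathcal{F}' = \rlpclass{\mathcal{I}}$ are each closed under colimits of small $\kappa$-filtered diagrams in $\Func{\mathbf{2}}{\mathcal{M}}$.

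There is no serious obstacle: the proof is essentially a bookkeeping exercise assembling the earlier results. The only slightly delicate point is matching the hypotheses of \autoref{thm:Quillen.small.object.argument}(iii) against the axioms of a strongly $\tuple{\kappa, \lambda}$-combinatorial model category and then deducing $\lambda$-accessibility of $L$ and $R$ from $\kappa$-accessibility via the Makkai--Paré relation $\kappa \sharplylt \lambda$.
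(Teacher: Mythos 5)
Your proof is correct and follows exactly the paper's route: part (i) is obtained by feeding the $\lambda$-small generating sets of morphisms between $\kappa$-presentable objects into clauses (ii) and (iii) of \autoref{thm:Quillen.small.object.argument}, and part (ii) is a direct application of \autoref{lem:filtered.colimits.in.the.right.class.of.cofibrantly-generated.wfs}. The paper's own proof is just a terser version of the same bookkeeping, and your extra step deducing strong $\lambda$-accessibility from $\kappa$-accessibility plus preservation of $\lambda$-presentable objects is a correct filling-in of what the paper leaves implicit.
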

\begin{proof}
(i).
Since the weak factorisation systems on $\mathcal{M}$ are cofibrantly generated by $\lambda$-small sets of morphisms in $\Kompakt[\kappa]{\mathcal{M}}$ and the hom-sets of $\Kompakt[\kappa]{\mathcal{M}}$ are all $\lambda$-small, we may apply  \autoref{thm:Quillen.small.object.argument} to obtain the required functorial weak factorisation systems.

\medskip\noindent
(ii).
This is a special case of \autoref{lem:filtered.colimits.in.the.right.class.of.cofibrantly-generated.wfs}.
\end{proof}

\begin{lem}
\label{lem:model.structure.criteria}
Let $\mathcal{M}$ be a category with limits and colimits of finite diagrams and let $\tuple{\mathcal{C}', \mathcal{F}}$ and $\tuple{\mathcal{C}, \mathcal{F}'}$ be weak factorisation systems on $\mathcal{M}$.
Assume $\mathcal{W}$ is a class of morphisms in $\mathcal{C}$ with the following property:
\[
\mathcal{W} \subseteq \set{ q \circ j }{ j \in \mathcal{C}', q \in \mathcal{F}' }
\]
The following are equivalent:
\begin{enumerate}[(i)]
\item $\tuple{\mathcal{C}, \mathcal{W}, \mathcal{F}}$ is a model structure on $\mathcal{M}$.

\item $\mathcal{W}$ has the 2-out-of-3 property in $\mathcal{M}$, $\mathcal{C}' = \mathcal{C} \cap \mathcal{W}$, and $\mathcal{F}' = \mathcal{W} \cap \mathcal{F}$.

\item $\mathcal{W}$ has the 2-out-of-3 property in $\mathcal{M}$, $\mathcal{C}' \subseteq \mathcal{W}$, and $\mathcal{F}' = \mathcal{W} \cap \mathcal{F}$.
\end{enumerate}
\end{lem}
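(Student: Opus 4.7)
The plan is to prove the implications (i) $\Rightarrow$ (ii) $\Rightarrow$ (iii) $\Rightarrow$ (ii) $\Rightarrow$ (i). The first two are essentially tautological: (i) $\Rightarrow$ (ii) is a direct reading of the model structure axioms, since trivial cofibrations are by definition the cofibrations that are also weak equivalences (and dually for trivial fibrations), while (ii) $\Rightarrow$ (iii) uses only $\mathcal{C} \cap \mathcal{W} \subseteq \mathcal{W}$.

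For (iii) $\Rightarrow$ (ii), the missing equality is $\mathcal{C}' = \mathcal{C} \cap \mathcal{W}$. Since $\mathcal{F}' = \mathcal{W} \cap \mathcal{F}$ gives $\mathcal{F}' \subseteq \mathcal{F}$, the standard retract argument between the two weak factorisation systems yields $\mathcal{C}' \subseteq \mathcal{C}$; combined with $\mathcal{C}' \subseteq \mathcal{W}$, this gives one inclusion. Conversely, for $f \in \mathcal{C} \cap \mathcal{W}$, factor $f = q \circ j$ via $\tuple{\mathcal{C}', \mathcal{F}}$. Since $j \in \mathcal{C}' \subseteq \mathcal{W}$ and $f \in \mathcal{W}$, the 2-out-of-3 property forces $q \in \mathcal{W} \cap \mathcal{F} = \mathcal{F}'$, and then the lifting property $\mathcal{C} \llpwrt \mathcal{F}'$ produces a retraction exhibiting $f$ as a retract of $j$; retract closure of $\mathcal{C}'$ then gives $f \in \mathcal{C}'$.

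The substantive content lies in (ii) $\Rightarrow$ (i). First, 2-out-of-3 combined with $\mathcal{C}' \cup \mathcal{F}' \subseteq \mathcal{W}$ upgrades the hypothesis $\mathcal{W} \subseteq \set{ q \circ j }{ j \in \mathcal{C}', q \in \mathcal{F}' }$ to an equality. Finite bicompleteness, 2-out-of-3, the factorisation and lifting axioms, and retract closure of $\mathcal{C}$ and $\mathcal{F}$ are all immediate consequences of the two weak factorisation systems and hypotheses. The remaining axiom---and the main obstacle---is retract closure of $\mathcal{W}$.

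To establish this, suppose $g$ is a retract of $f \in \mathcal{W}$ via data $\tuple{i_1, r_1, i_2, r_2}$. Factor $g = q_g \circ j_g$ and $f = q_f \circ j_f$ using $\tuple{\mathcal{C}', \mathcal{F}}$; then 2-out-of-3 gives $q_f \in \mathcal{F}'$, and it suffices to show $q_g \in \mathcal{F}'$. The lifting properties $j_g \llpwrt q_f$ and $j_f \llpwrt q_g$ produce compatible maps $\alpha : N_g \to N_f$ and $\beta : N_f \to N_g$ from the squares determined by the retract data. A further application of the lifting property---against $q_g$ using a map induced by the pushout $A \sqcup_X N_g$ together with $j_f$ and $\alpha$---can be arranged to force $\beta \circ \alpha = 1_{N_g}$, exhibiting $q_g$ as a retract of $q_f \in \mathcal{F}'$ in the arrow category. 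Retract closure of $\mathcal{F}'$ as the right class of a weak factorisation system then yields $q_g \in \mathcal{F}'$, completing the proof.
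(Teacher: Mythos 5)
Your chain (i) $\Rightarrow$ (ii) $\Rightarrow$ (iii) $\Rightarrow$ (ii) is correct and essentially the paper's own argument: the paper likewise handles (iii) $\Rightarrow$ (ii) by factoring a morphism of $\mathcal{C} \cap \mathcal{W}$ as $q \circ j$ with $j \in \mathcal{C}'$ and $q \in \mathcal{F}'$ and using the lift against $q$ to exhibit it as a retract of $j$ (you obtain that factorisation from $\tuple{\mathcal{C}', \mathcal{F}}$ plus 2-out-of-3 rather than from the displayed hypothesis on $\mathcal{W}$; both work). Note, however, that under the definition of model structure adopted in \Sect\ref{sect:model.structures}, closure of $\mathcal{W}$ under retracts is \emph{not} an axiom: a model structure is the 2-out-of-3 property together with the requirement that $\tuple{\mathcal{C} \cap \mathcal{W}, \mathcal{F}}$ and $\tuple{\mathcal{C}, \mathcal{W} \cap \mathcal{F}}$ be weak factorisation systems, and under (ii) these are literally the given $\tuple{\mathcal{C}', \mathcal{F}}$ and $\tuple{\mathcal{C}, \mathcal{F}'}$. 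So for this paper (ii) $\Rightarrow$ (i) is immediate by substitution, and the citation of \citep{May-Ponto:2012} only certifies that the result is a model structure in the classical sense.

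If you do want the classical axioms, your argument for retract closure of $\mathcal{W}$ has a genuine gap at the last step. The map $\theta : A \sqcup_X N_g \to N_f$ induced by $j_f$ and $\alpha$ is a weak equivalence by 2-out-of-3 (its precomposite with the pushout $A \to A \sqcup_X N_g$ of $j_g$ is $j_f$), but nothing forces it to be a cofibration, hence nothing places it in $\mathcal{C}' = \llpclass{\mathcal{F}}$; in $\cat{\SSet}$, for instance, $\theta$ can easily fail to be injective because $\alpha$ is just some lift. Consequently the lifting problem for $\theta$ against the fibration $q_g$, which is supposed to produce a $\beta$ with $\beta \circ \alpha = \id_{N_g}$, is not justified. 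The standard repair (this is the proof of Lemma~14.2.5 in \citep{May-Ponto:2012}) is to factor the map out of the pushout rather than $f$ itself: let $f' : A \sqcup_X N_g \to B$ be induced by $f$ and $i_2 \circ q_g$, note $f' \in \mathcal{W}$ by 2-out-of-3, and factor $f' = q' \circ j'$ with $j' \in \mathcal{C}'$ and $q' \in \mathcal{F}$, whence $q' \in \mathcal{F}'$. A single lift of $j'$ against $q_g$ then exhibits $q_g$ as a retract of $q'$: the section is the canonical composite $N_g \to A \sqcup_X N_g \to N'$, and its retraction identity holds automatically because the lift restricts along the pushout to the canonical map $A \sqcup_X N_g \to N_g$, so there is no compatibility left to arrange.
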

\begin{proof}
(i) \implies (ii).
Use the retract argument.

\medskip\noindent
(ii) \implies (iii).
Immediate.

\medskip\noindent
(iii) \implies (ii).
Suppose $i : X \to Z$ is in $\mathcal{C} \cap \mathcal{W}$; then there must be $j : X \to Y$ in $\mathcal{C}'$ and $q : Y \to Z$ in $\mathcal{F}'$ such that $i = q \circ j$, and so we have the commutative diagram shown below:
\[
\begin{tikzcd}
X \dar[swap]{i} \rar{j} &
Y \dar{q} \\
Z \rar[swap]{\id} &
Z
\end{tikzcd}
\]
Since $i \llpwrt q$, $i$ must be a retract of $j$; hence, $i$ is in $\mathcal{C}'$, and therefore $\mathcal{C} \cap \mathcal{W} \subseteq \mathcal{C}'$.

\medskip\noindent
(ii) \implies (i).
See Lemma~14.2.5 in \citep{May-Ponto:2012}.
\end{proof}

\begin{thm}
\label{thm:completeness.for.strongly.accessible.model.categories}
Let $\tuple{L', R}$ and $\tuple{L, R'}$ be functorial weak factorisation systems on a locally presentable category $\mathcal{M}$ and let $\mathcal{F}$ and $\mathcal{F}'$ be the full subcategories of $\Func{\mathbf{2}}{\mathcal{M}}$ spanned by the morphisms in the right class of of the weak factorisation systems induced by $\tuple{L', R}$ and $\tuple{L, R'}$, respectively.
Suppose $\kappa$ and $\lambda$ are regular cardinals  satisfying the following hypotheses:
\begin{itemize}
\item $\mathcal{M}$ is a locally $\kappa$-presentable category, and $\kappa \sharplylt \lambda$.

\item $\mathcal{F}$ and $\mathcal{F}'$ are closed under colimits of small $\kappa$-filtered diagrams in $\Func{\mathbf{2}}{\mathcal{M}}$.

\item $R, R' : \Func{\mathbf{2}}{\mathcal{M}} \to \Func{\mathbf{2}}{\mathcal{M}}$ are both $\kappa$-accessible and strongly $\lambda$-accessible.
\end{itemize}
Let $\mathcal{C}'$ be the full subcategory of $\Func{\mathbf{2}}{\mathcal{M}}$ spanned by the morphisms in the left class of the weak factorisation system induced by $\tuple{L', R}$ and let $\mathcal{W}$ be the preimage of $\mathcal{F}'$ under the functor $R : \Func{\mathbf{2}}{\mathcal{M}} \to \Func{\mathbf{2}}{\mathcal{M}}$.
Then:
\begin{enumerate}[(i)]
\item The functorial weak factorisation systems $\tuple{L', R}$ and $\tuple{L, R'}$ restrict to functorial weak factorisation systems on $\Kompakt[\lambda]{\mathcal{M}}$.

\item The inclusions $\mathcal{F} \embedinto \Func{\mathbf{2}}{\mathcal{M}}$ and $\mathcal{F}' \embedinto \Func{\mathbf{2}}{\mathcal{M}}$ are strongly $\lambda$-accessible functors.

\item $\mathcal{W}$ is closed under colimits of small $\kappa$-filtered diagrams in $\Func{\mathbf{2}}{\mathcal{M}}$, and the inclusion $\mathcal{W} \embedinto \Func{\mathbf{2}}{\mathcal{M}}$ is a strongly $\lambda$-accessible functor.

\item $\mathcal{C}' \subseteq \mathcal{W}$ if and only if the same holds in $\Kompakt[\lambda]{\mathcal{M}}$.

\item $\mathcal{F}' = \mathcal{W} \cap \mathcal{F}$ if and only if the same holds in $\Kompakt[\lambda]{\mathcal{M}}$.

\item $\mathcal{W}$ (regarded as a class of morphisms in $\mathcal{M}$) has the 2-out-of-3 property in $\mathcal{M}$ if and only if the same is true in $\Kompakt[\lambda]{\mathcal{M}}$.

\item The weak factorisation systems induced by $\tuple{L', R}$ and $\tuple{L, R'}$ underlie a model structure on $\mathcal{M}$ if and only if their restrictions to $\Kompakt[\lambda]{\mathcal{M}}$ underlie a model structure on $\Kompakt[\lambda]{\mathcal{M}}$.
\end{enumerate}
\end{thm}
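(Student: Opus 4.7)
My plan is to handle parts (i)--(iii) as direct applications of the accessibility machinery from \Sect \ref{sect:accessible.constructions}, to establish the three iff's (iv)--(vi) by a common colimit-approximation pattern, and to assemble (vii) by invoking \autoref{lem:model.structure.criteria}(iii) in both $\mathcal{M}$ and $\Kompakt[\lambda]{\mathcal{M}}$. For (i), a morphism $f$ in $\Kompakt[\lambda]{\mathcal{M}}$ is a $\lambda$-presentable object of $\Func{\mathbf{2}}{\mathcal{M}}$ by \autoref{prop:presentable.objects.in.diagram.categories}(i), strong $\lambda$-accessibility of $R$ makes $R f$ $\lambda$-presentable in $\Func{\mathbf{2}}{\mathcal{M}}$, and part (ii) of the same proposition gives $\lambda$-presentability of its endpoints, so $L' f$ and $R f$ remain in $\Func{\mathbf{2}}{\Kompakt[\lambda]{\mathcal{M}}}$ (analogously for $L, R'$). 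Part (ii) is \autoref{prop:accessibility.of.right.class.of.accessible.fwfs}(iii) with $\pi = \lambda$. For (iii), the identity $\mathcal{W} = R^{-1}(\mathcal{F}')$ reduces everything to \autoref{prop:preimage.of.accessible.replete.full.subcategories} with $F = R$ and $\mathcal{D} = \mathcal{F}'$; the strong $\lambda$-accessibility of $\mathcal{F}' \embedinto \Func{\mathbf{2}}{\mathcal{M}}$ needed there is supplied by (ii).

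The three iff's (iv)--(vi) each have a trivial forward direction (intersect everything with $K := \Func{\mathbf{2}}{\Kompakt[\lambda]{\mathcal{M}}}$, on which the restricted WFS's are available by (i)). For the reverse direction I would follow a uniform template: identify a strongly $\lambda$-accessible full subcategory $\mathcal{A}$ of an appropriate diagram category containing the candidate object, write the candidate as a $\kappa$-filtered colimit of $\lambda$-presentable objects of $\mathcal{A}$, apply the hypothesis to each vertex, and pass back to the colimit using closure of the target class under $\kappa$-filtered colimits. For (v), this requires the intersection $\mathcal{W} \cap \mathcal{F}$ to be strongly $\lambda$-accessible as a subcategory of $\Func{\mathbf{2}}{\mathcal{M}}$, which follows from a further application of \autoref{prop:preimage.of.accessible.replete.full.subcategories} with $F$ the inclusion $\mathcal{W} \embedinto \Func{\mathbf{2}}{\mathcal{M}}$ and $\mathcal{D} = \mathcal{F}$. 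For (vi), I would work in $\Func{\mathbf{3}}{\mathcal{M}}$ with its three evaluation functors $E_{01}, E_{12}, E_{02}$: each $E_{ij}^{-1}(\mathcal{W})$ is strongly $\lambda$-accessible by (iii) combined with the same preimage proposition, hence so is any binary intersection, and each of the three implications of 2-out-of-3 fits the template.

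Part (iv) is the main obstacle, because $\mathcal{C}'$ is not known to be a $\lambda$-accessible subcategory and an arbitrary $\lambda$-presentable approximation $f_i$ of a given $f \in \mathcal{C}'$ need not itself lie in $\mathcal{C}'$. My workaround is to run the approximation on $L' f$ instead of $f$. The functor $L' \colon \Func{\mathbf{2}}{\mathcal{M}} \to \Func{\mathbf{2}}{\mathcal{M}}$ is strongly $\lambda$-accessible and preserves $\kappa$-filtered colimits, since $\dom L' f = \dom f$ and $\codom L' f = \dom R f$, both of which inherit these properties from $R$. Writing $f = \indlim f_i$ as a $\kappa$-filtered colimit of $\lambda$-presentable morphisms and applying $L'$ yields $L' f = \indlim L' f_i$ with each $L' f_i$ automatically in $\mathcal{C}' \cap K$ (by the WFS axioms), so the hypothesis provides $R L' f_i \in \mathcal{F}'$; passing to the $\kappa$-filtered colimit gives $R L' f \in \mathcal{F}'$, that is, $L' f \in \mathcal{W}$. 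Finally, $f \llpwrt R f$ (since $f \in \mathcal{C}'$ and $R f \in \mathcal{F}$) produces a diagonal filler $h \colon Y \to \dom R f$ with $h \circ f = L' f$ and $R f \circ h = \id$, exhibiting $f$ as a retract of $L' f$ in $\Func{\mathbf{2}}{\mathcal{M}}$; applying $R$ makes $R f$ a retract of $R L' f \in \mathcal{F}'$, and $\mathcal{F}'$ is closed under retracts as the right class of a WFS, so $R f \in \mathcal{F}'$, completing the argument.

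For (vii), I would invoke \autoref{lem:model.structure.criteria}(iii) in $\mathcal{M}$ and in $\Kompakt[\lambda]{\mathcal{M}}$ simultaneously. The auxiliary hypothesis $\mathcal{W} \subseteq \set{ q \circ j }{ j \in \mathcal{C}', q \in \mathcal{F}' }$ is immediate from the definition of $\mathcal{W}$, since any $f \in \mathcal{W}$ has $f = R f \circ L' f$ with $L' f \in \mathcal{C}'$ and $R f \in \mathcal{F}'$. The lemma then identifies ``underlies a model structure'' with the conjunction of the 2-out-of-3 property for $\mathcal{W}$, the inclusion $\mathcal{C}' \subseteq \mathcal{W}$, and the equality $\mathcal{F}' = \mathcal{W} \cap \mathcal{F}$, and parts (vi), (iv), (v) give the equivalence of these three conditions between $\mathcal{M}$ and $\Kompakt[\lambda]{\mathcal{M}}$.
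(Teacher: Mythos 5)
Your proposal is correct and follows essentially the same route as the paper's own proof: parts (i)--(iii) and (v)--(vii) invoke the same propositions in the same way, and your treatment of (iv) \dash approximating $L'f$ rather than $f$ by $\lambda$-presentable objects and then recovering $f$ as a retract of $L'f$ via the lifting property \dash is exactly the paper's argument (the paper cites \autoref{prop:algebras.and.coalgebras.for.fwfs} for the retract step and closes under retracts in $\mathcal{W}$ rather than in $\mathcal{F}'$, which is equivalent). The only cosmetic difference is in (vi), where you take three separate preimages $E_{ij}^{-1}(\mathcal{W})$ and intersect, while the paper packages the same data as a single pullback along $\Func{\mathbf{3}}{\mathcal{M}} \to \Func{\mathbf{2}}{\mathcal{M}}^{\times 3}$ using \autoref{prop:products.of.accessible.categories}.
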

\begin{proof}
(i).
It is clear that we can restrict $\tuple{L', R}$ and $\tuple{L, R'}$ to obtain functorial factorisation systems on $\Kompakt[\lambda]{\mathcal{M}}$, and these are  functorial \emph{weak} factorisation systems by \autoref{thm:functorial.weak.factorisation.systems}.

\medskip\noindent
(ii).
Since $R, R' : \Func{\mathbf{2}}{\mathcal{M}} \to \Func{\mathbf{2}}{\mathcal{M}}$ are both $\kappa$-accessible and strongly $\lambda$-accessible, we may use \autoref{prop:accessibility.of.right.class.of.accessible.fwfs} to deduce that the inclusions $\mathcal{F} \embedinto \Func{\mathbf{2}}{\mathcal{M}}$ and $\mathcal{F}' \embedinto \Func{\mathbf{2}}{\mathcal{M}}$ are strongly $\lambda$-accessible.

\medskip\noindent
(iii).
Since $\mathcal{F}'$ is a replete full subcategory of $\Func{\mathbf{2}}{\mathcal{M}}$, we may use \autoref{prop:preimage.of.accessible.replete.full.subcategories} to deduce that $\mathcal{W}$ is closed under colimits of small $\kappa$-filtered diagrams in $\Func{\mathbf{2}}{\mathcal{M}}$ and that the inclusion $\mathcal{W} \embedinto \Func{\mathbf{2}}{\mathcal{M}}$ is a strongly $\lambda$-accessible functor.

\medskip\noindent
(iv).
The endofunctor $L' : \Func{\mathbf{2}}{\mathcal{M}} \to \Func{\mathbf{2}}{\mathcal{M}}$ is strongly $\lambda$-accessible, and $\mathcal{W}$ is closed under colimits of small $\lambda$-filtered diagrams, so (recalling propositions~\ref{prop:locally.presentable.functor.categories} and~\ref{prop:presentable.objects.in.diagram.categories}) if $L'$ sends the subcategory $\Func{\mathbf{2}}{\Kompakt[\lambda]{\mathcal{M}}}$ to $\mathcal{W}$, then the entirety of the image of $L'$ must be contained in $\mathcal{W}$.
\Autoref{prop:algebras.and.coalgebras.for.fwfs} implies every object in $\mathcal{C}'$ is a retract of an object in the image of $L'$, and (iii) implies $\mathcal{W}$ is closed under retracts, so we may deduce that $\mathcal{C}' \subseteq \mathcal{W}$ if and only if $\mathcal{C}' \cap \Func{\mathbf{2}}{\Kompakt[\lambda]{\mathcal{M}}} \subseteq \mathcal{W} \cap \Func{\mathbf{2}}{\Kompakt[\lambda]{\mathcal{M}}}$.

\medskip\noindent
(v).
Claims (ii) and (iii) and \autoref{prop:preimage.of.accessible.replete.full.subcategories} imply the inclusion $\mathcal{W} \cap \mathcal{F} \embedinto \Func{\mathbf{2}}{\mathcal{M}}$ is strongly $\lambda$-accessible; but by propositions~\ref{prop:presentable.objects.in.diagram.categories} and~\ref{prop:strongly.accessible.subcategories},
\begin{align*}
\Kompakt[\lambda]{\mathcal{F}'} & = \mathcal{F}' \cap \Func{\mathbf{2}}{\Kompakt[\lambda]{\mathcal{M}}} &
\Kompakt[\lambda]{\mathcal{W} \cap \mathcal{F}} & = \parens{\mathcal{W} \cap \mathcal{F}} \cap \Func{\mathbf{2}}{\Kompakt[\lambda]{\mathcal{M}}}
\end{align*}
so $\mathcal{F}' = \mathcal{W} \cap \mathcal{F}$ if and only if $\mathcal{F}' \cap \Func{\mathbf{2}}{\Kompakt[\lambda]{\mathcal{M}}} = \parens{\mathcal{W} \cap \mathcal{F}} \cap \Func{\mathbf{2}}{\Kompakt[\lambda]{\mathcal{M}}}$.

\medskip\noindent
(vi).
Consider the three full subcategories $\Lambda^2_i \argp{\mathcal{W}}$ (where $i \in \set{0, 1, 2}$) of $\Func{\mathbf{3}}{\mathcal{M}}$ spanned (respectively) by the diagrams of the form below:
\[
\mmhfill
\begin{tikzcd}
\bullet \drar[swap]{\in \mathcal{W}} \rar{\in \mathcal{W}} &
\bullet \dar \\
&
\bullet
\end{tikzcd}
\mmhfill
\begin{tikzcd}
\bullet \drar \rar{\in \mathcal{W}} &
\bullet \dar{\in \mathcal{W}} \\
&
\bullet
\end{tikzcd}
\mmhfill
\begin{tikzcd}
\bullet \drar[swap]{\in \mathcal{W}} \rar &
\bullet \dar{\in \mathcal{W}} \\
&
\bullet
\end{tikzcd}
\mmhfill
\]
By \autoref{prop:products.of.accessible.categories}, each inclusion $\Lambda^2_i \argp{\mathcal{W}} \embedinto \Func{\mathbf{3}}{\mathcal{M}}$ is the pullback of a strongly $\lambda$-accessible inclusion of a full subcategory of $\Func{\mathbf{2}}{\mathcal{M}}^{\times 3}$ along the evident projection functor $\Func{\mathbf{3}}{\mathcal{M}} \to \Func{\mathbf{2}}{\mathcal{M}}^{\times 3}$; thus, each inclusion $\Lambda^2_i \argp{\mathcal{W}} \embedinto \Func{\mathbf{3}}{\mathcal{M}}$ is a strongly $\lambda$-accessible functor.
We may then use \autoref{prop:strongly.accessible.subcategories} as above to prove the claim.

\medskip\noindent
(vii).
Apply \autoref{lem:model.structure.criteria}.
\end{proof}

\begin{cor}
\label{cor:accessible.weak.equivalences}
Let $\mathcal{M}$ be a strongly $\tuple{\kappa, \lambda}$-combinatorial model category.
Then the full subcategory $\mathcal{W}$ of $\Func{\mathbf{2}}{\mathcal{M}}$ spanned by the weak equivalences is closed under colimits of small $\kappa$-filtered diagrams in $\Func{\mathbf{2}}{\mathcal{M}}$, and the inclusion $\mathcal{W} \embedinto \Func{\mathbf{2}}{\mathcal{M}}$ is a strongly $\lambda$-accessible functor.
\end{cor}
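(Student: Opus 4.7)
The plan is to reduce the corollary directly to \autoref{thm:completeness.for.strongly.accessible.model.categories}(iii) after verifying its hypotheses and identifying the class $\mathcal{W}$ in the corollary with the class $\mathcal{W}$ constructed there.

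First I would verify the hypotheses. Since $\mathcal{M}$ is strongly $\tuple{\kappa, \lambda}$-combinatorial, the condition that $\mathcal{M}$ is locally $\kappa$-presentable with $\kappa \sharplylt \lambda$ holds by definition. \Autoref{prop:accessibility.of.fibrations.and.trivial.fibrations} then supplies the remaining hypotheses: part (i) gives a (trivial cofibration, fibration)-factorisation functor $\tuple{L', R}$ and a (cofibration, trivial fibration)-factorisation functor $\tuple{L, R'}$ that are $\kappa$-accessible and strongly $\lambda$-accessible, while part (ii) guarantees that the full subcategories $\mathcal{F}$ (fibrations) and $\mathcal{F}'$ (trivial fibrations) of $\Func{\mathbf{2}}{\mathcal{M}}$ are closed under colimits of small $\kappa$-filtered diagrams.

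Next I would identify the two meanings of $\mathcal{W}$. In the corollary, $\mathcal{W}$ denotes the class of weak equivalences in $\mathcal{M}$, whereas in the theorem $\mathcal{W}$ is the preimage of $\mathcal{F}'$ under $R$, that is, the class of morphisms $f$ whose (trivial cofibration, fibration)-factorisation $f = R f \circ L' f$ has $R f$ a trivial fibration. I would argue these coincide: if $f$ is a weak equivalence then $L' f$ is a trivial cofibration, hence a weak equivalence, so by the 2-out-of-3 property $R f$ is a weak equivalence and therefore a trivial fibration; conversely, if $R f$ is a trivial fibration then $f = R f \circ L' f$ is a composite of two weak equivalences, hence a weak equivalence.

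With the hypotheses in place and the two notions of $\mathcal{W}$ identified, the conclusion of the corollary is exactly the content of \autoref{thm:completeness.for.strongly.accessible.model.categories}(iii). There is no genuine obstacle; the only step requiring a line of justification is the identification of weak equivalences with the preimage class, and this is standard model-categorical bookkeeping using 2-out-of-3.
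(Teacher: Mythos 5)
Your proposal is correct and matches the paper's proof, which simply combines \autoref{prop:accessibility.of.fibrations.and.trivial.fibrations} with \autoref{thm:completeness.for.strongly.accessible.model.categories}. The one step you spell out explicitly\dash identifying the weak equivalences with the preimage of $\mathcal{F}'$ under $R$ via 2-out-of-3\dash is exactly the content of \autoref{lem:factorising.weak.equivalences}, which the paper leaves implicit here.
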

\begin{proof}
Combine \autoref{prop:accessibility.of.fibrations.and.trivial.fibrations} and \autoref{thm:completeness.for.strongly.accessible.model.categories}.
\end{proof}

\Autoref{thm:completeness.for.strongly.accessible.model.categories} suggests that free $\lambda$-ind-completions of suitable small model categories are combinatorial model categories.
More precisely:

\begin{dfn}
Let $\kappa$ and $\lambda$ be regular cardinals.
A \strong{$\tuple{\kappa, \lambda}$-miniature model category} is a model category $\mathcal{M}$ that satisfies these axioms:
\begin{itemize}
\item $\mathcal{M}$ is a $\tuple{\kappa, \lambda}$-accessible generated category, and $\kappa \sharplylt \lambda$.

\item $\mathcal{M}$ has limits for finite diagrams and colimits of $\lambda$-small diagrams.

\item Each hom-set in $\Kompakt[\kappa][\lambda]{\mathcal{M}}$ is $\lambda$-small.

\item There exist $\lambda$-small sets of morphisms in $\Kompakt[\kappa][\lambda]{\mathcal{M}}$ that cofibrantly generate the model structure of $\mathcal{M}$.
\end{itemize}
\end{dfn}

\begin{prop}
\label{prop:heart.of.a.combinatorial.model.category}
If $\mathcal{M}$ is a strongly $\tuple{\kappa, \lambda}$-combinatorial model category, then $\Kompakt[\lambda]{\mathcal{M}}$ is a $\tuple{\kappa, \lambda}$-miniature model category (with the weak equivalences, cofibrations, and fibrations inherited from $\mathcal{M}$).
\end{prop}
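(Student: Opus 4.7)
The plan is to verify each of the four clauses in the definition of a $\tuple{\kappa,\lambda}$-miniature model category by translating every statement about $\Kompakt[\lambda]{\mathcal{M}}$ back into a statement about $\mathcal{M}$ that has already been proved. The organising observation is that, because $\mathcal{M}$ is locally $\kappa$-presentable and $\kappa \sharplylt \lambda$, \autoref{prop:presentable.objects.in.categories.of.presentable.objects}\,(ii) identifies $\Kompakt[\lambda]{\mathcal{M}}$ as a $\tuple{\kappa,\lambda}$-accessibly generated category and gives
\[
\Kompakt[\kappa][\lambda]{\Kompakt[\lambda]{\mathcal{M}}} = \Kompakt[\kappa]{\mathcal{M}}.
\]
With this identification, the hom-smallness clause and the existence of $\lambda$-small cofibrant generating sets inside $\Kompakt[\kappa][\lambda]{\Kompakt[\lambda]{\mathcal{M}}}$ follow at once from the corresponding clauses in the hypothesis that $\mathcal{M}$ is strongly $\tuple{\kappa,\lambda}$-combinatorial, while finite limits in $\Kompakt[\lambda]{\mathcal{M}}$ are inherited from $\mathcal{M}$ by the strong-combinatoriality closure condition, and $\lambda$-small colimits exist because $\mathcal{M}$ is cocomplete and $\lambda$-presentability is preserved under $\lambda$-small colimits by \autoref{lem:small.objects}.

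It remains to exhibit the model structure on $\Kompakt[\lambda]{\mathcal{M}}$. Here I would invoke \autoref{prop:accessibility.of.fibrations.and.trivial.fibrations} to produce functorial (trivial cofibration, fibration)- and (cofibration, trivial fibration)-factorisations on $\mathcal{M}$ that are $\kappa$-accessible and strongly $\lambda$-accessible, with right classes $\mathcal{F}$ and $\mathcal{F}'$ closed under small $\kappa$-filtered colimits. These are precisely the hypotheses of \autoref{thm:completeness.for.strongly.accessible.model.categories}: clause\,(i) of that theorem says the two functorial weak factorisation systems restrict to weak factorisation systems on $\Kompakt[\lambda]{\mathcal{M}}$, and clause\,(vii) then converts the fact that the original systems underlie a model structure on $\mathcal{M}$ into the analogous fact for their restrictions to $\Kompakt[\lambda]{\mathcal{M}}$. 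The weak equivalences, cofibrations, and fibrations of the restricted model structure are the evident intersections with $\Func{\mathbf{2}}{\Kompakt[\lambda]{\mathcal{M}}}$, so the restriction is indeed inherited from $\mathcal{M}$.

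The one step that requires a moment's thought — and which I expect to be the only real bookkeeping obstacle — is to ensure that the cofibrantly generated weak factorisation systems on $\mathcal{M}$ remain cofibrantly generated by the \emph{same} sets when cut down to $\Kompakt[\lambda]{\mathcal{M}}$, rather than only being restrictions of factorisation systems. This reduces to verifying that the small-object-argument factorisations can be performed inside $\Kompakt[\lambda]{\mathcal{M}}$ itself, which amounts to the endofunctors $L, L', R, R'$ preserving $\lambda$-presentable objects. Under our standing assumptions ($\kappa$-presentable domains and codomains for the generators, $\lambda$-small hom-sets in $\Kompakt[\kappa]{\mathcal{M}}$, $\lambda$-small generating sets, and $\kappa \sharplylt \lambda$) this is exactly clause\,(iii) of \autoref{thm:Quillen.small.object.argument}, so no new argument is required and the proof closes.
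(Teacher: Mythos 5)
Your proposal is correct and follows essentially the same route as the paper: identify $\Kompakt[\lambda]{\mathcal{M}}$ as a $\tuple{\kappa,\lambda}$-accessibly generated category closed under finite limits and $\lambda$-small colimits, restrict the strongly $\lambda$-accessible functorial factorisations of \autoref{prop:accessibility.of.fibrations.and.trivial.fibrations} to obtain the inherited model structure, and observe that the same $\lambda$-small generating sets in $\Kompakt[\kappa]{\mathcal{M}}$ still cofibrantly generate. The only cosmetic difference is that you route the restriction step through clauses (i) and (vii) of \autoref{thm:completeness.for.strongly.accessible.model.categories} and the generation step through \autoref{thm:Quillen.small.object.argument}\,(iii), whereas the paper argues directly via the retract characterisation of \autoref{thm:functorial.weak.factorisation.systems} and uses \autoref{lem:solution.set.condition.and.lifting.conditions} for the lifting-property bookkeeping; both amount to the same underlying machinery.
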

\begin{proof}
By \autoref{thm:classification.of.accessibly.generated.categories}, $\Kompakt[\lambda]{\mathcal{M}}$ is a $\tuple{\kappa, \lambda}$-accessible generated category, and \autoref{lem:small.objects} implies it is closed under colimits of $\lambda$-small diagrams in $\mathcal{M}$.
Now, choose a pair of functorial factorisation systems as in \autoref{prop:accessibility.of.fibrations.and.trivial.fibrations}, and recall that \autoref{thm:functorial.weak.factorisation.systems} says a morphism is in the left (\resp right) class of a functorial weak factorisation system if and only if it is a retract of the left (\resp right) half of its functorial factorisation.
Since we chose factorisation functors that are strongly $\lambda$-accessible, it follows that the weak factorisation systems on $\mathcal{M}$ restricts to weak factorisation systems on $\Kompakt[\lambda]{\mathcal{M}}$.
It is then clear that $\Kompakt[\lambda]{\mathcal{M}}$
inherits a model structure from $\mathcal{M}$, and \autoref{lem:solution.set.condition.and.lifting.conditions} implies the model structure on $\Kompakt[\lambda]{\mathcal{M}}$ can be cofibrantly generated by $\lambda$-small sets of morphisms in $\Kompakt[\kappa]{\mathcal{M}}$.
The remaining axioms for a $\lambda$-miniature model category are easily verified.
\end{proof}

\begin{remark}
The subcategory $\Kompakt[\lambda]{\mathcal{M}}$ inherits much of the homotopy-theoretic structure of $\mathcal{M}$.
For instance, $\Kompakt[\lambda]{\mathcal{M}}$ has simplicial and cosimplicial resolutions and the inclusion $\Kompakt[\lambda]{\mathcal{M}} \embedinto \mathcal{M}$ preserves them, so the induced $\parens{\Ho \cat{\SSet}}$-enriched functor $\Ho \Kompakt[\lambda]{\mathcal{M}} \to \Ho \mathcal{M}$ is fully faithful, where the $\parens{\Ho \cat{\SSet}}$-enrichment is defined as in \citep[\Chap 5]{Hovey:1999}.
In particular, the induced functor between the ordinary homotopy categories is fully faithful.
\end{remark}

\begin{thm}
\label{thm:model.structure.for.ind-objects}
Let $\mathcal{K}$ be a $\tuple{\kappa, \lambda}$-miniature model category, let $\mathcal{M}$ be the free $\lambda$-ind-completion $\Ind[\lambda]{\mathcal{K}}$, and let $\gamma : \mathcal{K} \to \mathcal{M}$ be the canonical embedding.
\begin{enumerate}[(i)]
\item There is a unique way of making $\mathcal{M}$ into a strongly $\tuple{\kappa, \lambda}$-combinatorial model category such that $\gamma : \mathcal{K} \to \mathcal{M}$ preserves and reflects the model structure.

\item Moreover, for any model category $\mathcal{N}$ with colimits of all small diagrams, restriction along $\gamma : \mathcal{K} \to \mathcal{M}$ induces a functor
\begin{itemize}
\item \emph{from} the full subcategory of $\Func{\mathcal{M}}{\mathcal{N}}$ spanned by the left Quillen functors

\item \emph{to} the full subcategory of $\Func{\mathcal{K}}{\mathcal{N}}$ spanned by the functors that preserve cofibrations, trivial cofibrations, and colimits of $\lambda$-small diagrams.
\end{itemize} 
\end{enumerate}
\end{thm}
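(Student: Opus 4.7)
The plan for (i) is to transfer the generating cofibrations and trivial cofibrations of $\mathcal{K}$ to $\mathcal{M} = \Ind[\lambda]{\mathcal{K}}$ via $\gamma$, apply Quillen's small object argument to obtain weak factorisation systems on $\mathcal{M}$, and then invoke \autoref{thm:completeness.for.strongly.accessible.model.categories} in reverse: since the model axioms hold on $\Kompakt[\lambda]{\mathcal{M}} \simeq \mathcal{K}$ by hypothesis, clause (vii) of that theorem lifts them to $\mathcal{M}$. First I would use \autoref{thm:classification.of.accessibly.generated.categories} to see that $\mathcal{M}$ is a $\kappa$-accessible category in which $\gamma$ identifies $\mathcal{K}$ with (a skeleton of) $\Kompakt[\lambda]{\mathcal{M}}$; because $\mathcal{K}$ has $\lambda$-small colimits, $\mathcal{M}$ is cocomplete, hence locally $\kappa$-presentable. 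The Yoneda embedding preserves all limits, so $\gamma$ preserves finite limits, and consequently $\Kompakt[\lambda]{\mathcal{M}}$ is closed under finite limits in $\mathcal{M}$. Via \autoref{prop:presentable.objects.in.categories.of.presentable.objects}, $\gamma$ further identifies $\Kompakt[\kappa][\lambda]{\mathcal{K}}$ with $\Kompakt[\kappa]{\mathcal{M}}$, so the chosen $\lambda$-small generating sets $\mathcal{I}, \mathcal{I}'$ of $\mathcal{K}$ transport to $\lambda$-small sets of morphisms between $\kappa$-presentable objects in $\mathcal{M}$, with hom-sets in $\Kompakt[\kappa]{\mathcal{M}}$ remaining $\lambda$-small.

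Next, \autoref{thm:Quillen.small.object.argument} produces functorial weak factorisation systems $\tuple{L', R}$ and $\tuple{L, R'}$ on $\mathcal{M}$ cofibrantly generated by $\mathcal{I}, \mathcal{I}'$ with $\kappa$-accessible, strongly $\lambda$-accessible right-class functors, and their right classes are closed under small $\kappa$-filtered colimits in $\Func{\mathbf{2}}{\mathcal{M}}$ by \autoref{lem:filtered.colimits.in.the.right.class.of.cofibrantly-generated.wfs}. All hypotheses of \autoref{thm:completeness.for.strongly.accessible.model.categories} are thus met, and by \autoref{lem:solution.set.condition.and.lifting.conditions} the restrictions to $\Kompakt[\lambda]{\mathcal{M}} \simeq \mathcal{K}$ are cofibrantly generated by the same sets and hence coincide with the weak factorisation systems of $\mathcal{K}$. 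Applying clause (vii) then yields the desired strongly $\tuple{\kappa, \lambda}$-combinatorial structure on $\mathcal{M}$. Here $\gamma$ preserves and reflects (trivial) cofibrations by construction and, via the factorisation-based characterisation used in \autoref{thm:completeness.for.strongly.accessible.model.categories}, also preserves and reflects weak equivalences. Uniqueness holds because any model structure on $\mathcal{M}$ for which $\gamma$ preserves and reflects the model structure must have generating cofibrations in $\Kompakt[\kappa]{\mathcal{M}}$ that, after restricting along $\gamma$, recover those of $\mathcal{K}$, and these generators determine everything via the small object argument.

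For (ii), $\gamma$ preserves $\lambda$-small colimits (as the unit of the free $\lambda$-filtered cocompletion) and sends (trivial) cofibrations to (trivial) cofibrations by (i); therefore for any left Quillen functor $F : \mathcal{M} \to \mathcal{N}$, the composite $F\gamma : \mathcal{K} \to \mathcal{N}$ preserves $\lambda$-small colimits, cofibrations, and trivial cofibrations, and the assignment $F \mapsto F\gamma$ is evidently functorial in natural transformations. The main obstacle throughout is reconciling the two potential notions of weak equivalence on $\mathcal{K}$: the one inherited from $\mathcal{M}$ through the theorem's factorisation-based description (morphisms whose $R$-component is a trivial fibration) and the one $\mathcal{K}$ already carries as a model category. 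These agree once one notes that $R$ restricted to $\Kompakt[\lambda]{\mathcal{M}}$ is a genuine (trivial cofibration, fibration)-factorisation in $\mathcal{K}$, so that the coincidence of weak equivalences follows by the standard factorisation characterisation; this is the point on which the reflection claim \dash and hence uniqueness \dash ultimately rests.
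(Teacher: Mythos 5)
Your proposal is correct and follows essentially the same route as the paper: transfer the generating sets along $\gamma$, run the small object argument to obtain $\kappa$-accessible, strongly $\lambda$-accessible factorisation functors, check that the restricted weak factorisation systems on $\Kompakt[\lambda]{\mathcal{M}} \simeq \mathcal{K}$ are the given ones, invoke clause (vii) of \autoref{thm:completeness.for.strongly.accessible.model.categories}, and get uniqueness from the fact that a strongly $\tuple{\kappa,\lambda}$-combinatorial model structure is cofibrantly generated by the (trivial) cofibrations in $\Kompakt[\kappa]{\mathcal{M}}$. The only quibble is your citation of \autoref{lem:solution.set.condition.and.lifting.conditions} for the coincidence of the restricted factorisation systems with those of $\mathcal{K}$: that lemma concerns left lifting properties against accessible subcategories, whereas the point here follows directly from fullness of $\mathcal{K} \subseteq \mathcal{M}$ (lifting problems of morphisms of $\mathcal{K}$ against $\mathcal{I}$ and $\mathcal{I}'$ are the same in either category), which is what the paper's ``it is not hard to see'' amounts to.
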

\begin{proof}
(i).
We will identify $\mathcal{K}$ with the image of $\gamma : \mathcal{K} \to \mathcal{M}$.
Note that $\mathcal{M}$ is a locally $\kappa$-presentable category, by \autoref{thm:classification.of.accessibly.generated.categories}.
Let $\mathcal{I}$ (\resp $\mathcal{I}'$) be a $\lambda$-small set of morphisms in $\Kompakt[\kappa][\lambda]{\mathcal{K}}$ that generate the cofibrations (\resp trivial cofibrations) in $\mathcal{K}$.
Let $\tuple{L', R}$ and $\tuple{L, R'}$ be functorial weak factorisation systems cofibrantly generated by $\mathcal{I}'$ and $\mathcal{I}$ respectively; by \autoref{thm:Quillen.small.object.argument}, we may assume $R, R' : \Func{\mathbf{2}}{\mathcal{M}} \to \Func{\mathbf{2}}{\mathcal{M}}$ preserve colimits of small $\kappa$-filtered diagrams and are strongly $\lambda$-accessible functors.

Let $\mathcal{F}$ and $\mathcal{F}'$ be the full subcategories of $\Func{\mathbf{2}}{\mathcal{M}}$ spanned by the right class of the weak factorisation systems induced by $\tuple{L', R}$ and $\tuple{L, R'}$, respectively.
It is not hard to see that any morphism in $\mathcal{K}$ is an object in $\mathcal{F}$ (\resp $\mathcal{F}'$) if and only if it is a fibration (\resp trivial fibration) in $\mathcal{K}$.
\Autoref{lem:filtered.colimits.in.the.right.class.of.cofibrantly-generated.wfs} says $\mathcal{F}$ and $\mathcal{F}'$ are closed under colimits of small $\kappa$-filtered diagrams in $\Func{\mathbf{2}}{\mathcal{M}}$, so we may now apply \autoref{thm:completeness.for.strongly.accessible.model.categories} to deduce that $\mathcal{F}$ and $\mathcal{F}'$ induce a model structure on $\mathcal{M}$.
It is clear that $\mathcal{M}$ equipped with this model structure is then a strongly $\tuple{\kappa, \lambda}$-combinatorial model category in a way that is compatible with the canonical embedding $\mathcal{K} \to \mathcal{M}$.

Finally, to see that the above construction is the unique way of making $\mathcal{M}$ into a strongly $\tuple{\kappa, \lambda}$-combinatorial model category satisfying the given conditions, we simply have to observe that the model structure of a strongly $\tuple{\kappa, \lambda}$-\allowhyphens combinatorial model category is necessarily cofibrantly generated by the cofibrations and trivial cofibrations in (a small skeleton of) $\Kompakt[\kappa]{\mathcal{M}}$ (independently of the choice of $\mathcal{I}$ and $\mathcal{I}'$).

\medskip\noindent
(ii).
Clearly, every left Quillen functor $F : \mathcal{M} \to \mathcal{N}$ restricts to a functor $F \gamma : \mathcal{K} \to \mathcal{N}$ that preserves cofibrations, trivial cofibrations, and colimits of $\lambda$-small diagrams.
Conversely, given any such functor $F' : \mathcal{K} \to \mathcal{N}$, we may apply \autoref{thm:universal.property.of.free.ind.completion} to obtain a $\lambda$-accessible functor $F : \mathcal{M} \to \mathcal{N}$ such that $F \gamma = F'$.
Since cofibrations and trivial cofibrations in $\mathcal{M}$ are generated under colimits of $\lambda$-filtered diagrams by cofibrations and trivial cofibrations in $\mathcal{K}$, the functor $F : \mathcal{M} \to \mathcal{N}$ preserves cofibrations and trivial cofibrations if $F' : \mathcal{K} \to \mathcal{N}$ does.
A similar argument (using \autoref{prop:locally.presentable.functor.categories}) shows that $F : \mathcal{M} \to \mathcal{N}$ preserves colimits of $\lambda$-small diagrams.
Thus, $F : \mathcal{M} \to \mathcal{N}$ preserves colimits of all small diagrams,
\unskip\footnote{See Lemma~2.25 in \citep{Low:2013a}.}
so it has a right adjoint (by \eg the special adjoint functor theorem) and is indeed a left Quillen functor.
\end{proof}

\begin{remark}
Let $\mbfU$ and $\mbfUplus$ be universes, with $\mbfU \in \mbfUplus$, let $\mathcal{M}$ be a strongly $\tuple{\kappa, \lambda}$-combinatorial model $\mbfU$-category, and let $\mathcal{M} \embedinto \succof{\mathcal{M}}$ be a $\tuple{\kappa, \mbfU, \mbfUplus}$-\allowhyphens extension in the sense of \citep{Low:2013a}.
By combining  \autoref{prop:heart.of.a.combinatorial.model.category} and \autoref{thm:model.structure.for.ind-objects}, we may deduce that there is a unique way of making $\succof{\mathcal{M}}$ into a strongly $\tuple{\kappa, \lambda}$-\allowhyphens combinatorial model $\mbfUplus$-category such that the embedding $\mathcal{M} \embedinto \succof{\mathcal{M}}$ preserves and reflects the model structure.
In view of \autoref{prop:combinatorial.model.categories.are.strongly.combinatorial}, it follows that every combinatorial model $\mbfU$-category can be canonically extended to a combinatorial model $\mbfUplus$-category; moreover, by Theorem~3.11 in \opcit, the extension does not depend on $\tuple{\kappa, \lambda}$.
\end{remark}

The techniques used in the proof of \autoref{thm:completeness.for.strongly.accessible.model.categories} are easily generalised to combinatorial model categories with desirable properties.

\begin{thm}
\label{thm:completeness.for.right.properness}
Let $\mathcal{M}$ be a strongly $\tuple{\kappa, \lambda}$-combinatorial model category.
The following are equivalent:
\begin{enumerate}[(i)]
\item $\mathcal{M}$ is a right proper model category.

\item $\Kompakt[\lambda]{\mathcal{M}}$ is a right proper model category.
\end{enumerate}
\end{thm}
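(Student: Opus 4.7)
The plan is to mirror the strategy of \autoref{thm:completeness.for.strongly.accessible.model.categories}. The key idea is to encode right properness as an inclusion between two replete full subcategories of $\Func{\mathbf{2}\times\mathbf{2}}{\mathcal{M}}$, show that both subcategories are strongly $\lambda$-accessible, and then use that an inclusion between strongly $\lambda$-accessible subcategories of a locally $\kappa$-presentable category holds if and only if it holds after restriction to $\lambda$-presentable objects. The direction (i) $\implies$ (ii) is immediate: since $\Kompakt[\lambda]{\mathcal{M}}$ is closed under finite limits in $\mathcal{M}$, pullback squares in $\Kompakt[\lambda]{\mathcal{M}}$ remain pullback squares in $\mathcal{M}$, and weak equivalences, fibrations, and cofibrations in $\Kompakt[\lambda]{\mathcal{M}}$ are inherited from $\mathcal{M}$, so right properness of $\mathcal{M}$ trivially restricts.

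For the converse, let $\mathcal{P}$ be the replete full subcategory of $\Func{\mathbf{2}\times\mathbf{2}}{\mathcal{M}}$ spanned by the pullback squares, let $\mathcal{G} \subseteq \mathcal{P}$ be the further full subcategory of those pullback squares whose right edge lies in $\mathcal{F}$ (the fibrations) and whose bottom edge lies in $\mathcal{W}$ (the weak equivalences), and let $T : \Func{\mathbf{2}\times\mathbf{2}}{\mathcal{M}} \to \Func{\mathbf{2}}{\mathcal{M}}$ be the ``top edge'' projection. Right properness of $\mathcal{M}$ is precisely the condition that $T(\mathcal{G}) \subseteq \mathcal{W}$. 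The canonical map from the top-left vertex of a square to the pullback of its right-and-bottom cospan furnishes a natural transformation between two $\mathcal{M}$-valued functors on $\Func{\mathbf{2}\times\mathbf{2}}{\mathcal{M}}$; both preserve small $\kappa$-filtered colimits because finite limits commute with them in $\mathcal{M}$, and both send $\lambda$-presentable squares to $\lambda$-presentable objects of $\mathcal{M}$ (the ``pullback of the cospan'' case uses the hypothesis that $\Kompakt[\lambda]{\mathcal{M}}$ is closed under finite limits). By \autoref{thm:accessible.inverters}, the inclusion $\mathcal{P} \embedinto \Func{\mathbf{2}\times\mathbf{2}}{\mathcal{M}}$ is therefore strongly $\lambda$-accessible. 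Combining \autoref{prop:accessibility.of.fibrations.and.trivial.fibrations}, \autoref{cor:accessible.weak.equivalences}, \autoref{prop:products.of.accessible.categories}, and two applications of \autoref{prop:preimage.of.accessible.replete.full.subcategories} (first to pull the product $\mathcal{F} \times \mathcal{W}$ back along the ``right edge and bottom edge'' projection $\Func{\mathbf{2}\times\mathbf{2}}{\mathcal{M}} \to \Func{\mathbf{2}}{\mathcal{M}} \times \Func{\mathbf{2}}{\mathcal{M}}$, and then to intersect with $\mathcal{P}$) yields that the inclusion $\mathcal{G} \embedinto \Func{\mathbf{2}\times\mathbf{2}}{\mathcal{M}}$ is also strongly $\lambda$-accessible.

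By \autoref{prop:presentable.objects.in.diagram.categories} and \autoref{prop:strongly.accessible.subcategories}, the $\lambda$-presentable objects of $\mathcal{G}$ are precisely the pullback squares in $\Kompakt[\lambda]{\mathcal{M}}$ whose right edge is a fibration and whose bottom edge is a weak equivalence; consequently, every object of $\mathcal{G}$ is a small $\kappa$-filtered colimit (computed in $\Func{\mathbf{2}\times\mathbf{2}}{\mathcal{M}}$) of such small squares. Assuming (ii), the top edge of each small square is then a weak equivalence in $\Kompakt[\lambda]{\mathcal{M}}$ and hence in $\mathcal{M}$; since $\mathcal{W}$ is closed under small $\kappa$-filtered colimits in $\Func{\mathbf{2}}{\mathcal{M}}$ by \autoref{cor:accessible.weak.equivalences}, this property propagates to the colimit, giving (i). The main technical point is the verification that the inclusion $\mathcal{P} \embedinto \Func{\mathbf{2}\times\mathbf{2}}{\mathcal{M}}$ is strongly $\lambda$-accessible: this is exactly where the hypothesis that $\Kompakt[\lambda]{\mathcal{M}}$ is closed under finite limits becomes essential, as it ensures that the ``pullback of the cospan'' functor sends $\lambda$-presentable squares to $\lambda$-presentable objects.
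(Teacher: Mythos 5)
Your proof is correct and follows essentially the same strategy as the paper: both arguments hinge on showing that the relevant diagram subcategory is strongly $\lambda$-accessible (using closure of $\Kompakt[\lambda]{\mathcal{M}}$ under finite limits), identifying its $\lambda$-presentable objects with the corresponding diagrams in $\Kompakt[\lambda]{\mathcal{M}}$, and concluding via closure of $\mathcal{W}$ under filtered colimits. The only difference is packaging: the paper works with the category of cospans $\Func{\mathcal{D}}{\mathcal{M}}$ for $\mathcal{D} = \set{\bullet \rightarrow \bullet \leftarrow \bullet}$ and the strongly $\lambda$-accessible functor $\prolim_\mathcal{D}$, which renders your inverter step (carving the pullback squares out of $\Func{\mathbf{2}\times\mathbf{2}}{\mathcal{M}}$ via \autoref{thm:accessible.inverters}) unnecessary, whereas your route is a mild elaboration of the same idea.
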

\begin{proof}
(i) \implies (ii).
Immediate, because the model structure on $\Kompakt[\lambda]{\mathcal{M}}$ is the restriction of the model structure on $\mathcal{M}$ and $\Kompakt[\lambda]{\mathcal{M}}$ is closed under finite limits in $\mathcal{M}$.

\medskip\noindent
(ii) \implies (i).
Let $\mathcal{D} = \set{ \bullet \rightarrow \bullet \leftarrow \bullet }$, \ie the category freely generated by a cospan.
Since $\mathcal{D}$ is a finite category and $\mathcal{M}$ is a locally $\kappa$-presentable category, \autoref{prop:locally.presentable.functor.categories} says $\Func{\mathcal{D}}{\mathcal{M}}$ is also a locally $\kappa$-presentable category, and \autoref{prop:presentable.objects.in.diagram.categories} implies the $\kappa$-presentable objects in $\Func{\mathcal{D}}{\mathcal{M}}$ are precisely the componentwise $\kappa$-presentable objects.
Thus, the functor $\Delta : \mathcal{M} \to \Func{\mathcal{D}}{\mathcal{M}}$ is strongly $\kappa$-accessible, so \autoref{prop:left.adjoints.and.strong.accessibility} says its right adjoint $\prolim_\mathcal{D} : \Func{\mathcal{D}}{\mathcal{M}} \to \mathcal{M}$ is $\kappa$-accessible; moreover, it is strongly $\lambda$-accessible because $\Kompakt[\lambda]{\mathcal{M}}$ is closed under finite limits in $\mathcal{M}$.

Consider the full subcategory $\mathcal{P} \subseteq \Func{\mathcal{D}}{\mathcal{M}}$ spanned by those diagrams in $\mathcal{M}$ of the form below,
\[
\begin{tikzcd}
{} &
\bullet \dar{w} \\
\bullet \rar[swap]{p} &
\bullet
\end{tikzcd}
\]
where $p$ is a fibration and $w$ is a weak equivalence.
Propositions~\ref{prop:preimage.of.accessible.replete.full.subcategories} and~\ref{prop:accessibility.of.fibrations.and.trivial.fibrations}, \autoref{thm:completeness.for.strongly.accessible.model.categories}, and \autoref{cor:accessible.weak.equivalences} together imply that $\mathcal{P}$ is closed under colimits of small $\kappa$-filtered diagrams in $\Func{\mathcal{D}}{\mathcal{M}}$ and that the inclusion $\mathcal{P} \embedinto \Func{\mathcal{D}}{\mathcal{M}}$ is a strongly $\lambda$-accessible functor.
Since $\prolim_\mathcal{D} : \Func{\mathcal{D}}{\mathcal{M}} \to \mathcal{M}$ is strongly $\lambda$-accessible and the class of weak equivalences in $\mathcal{M}$ is closed under $\lambda$-filtered colimits in $\Func{\mathbf{2}}{\mathcal{M}}$, it follows that $\mathcal{M}$ is right proper if $\Kompakt[\lambda]{\mathcal{M}}$ is.
\end{proof}

\begin{remark}
It is tempting to say that the analogous proposition for left properness follows by duality; unfortunately, the opposite of a combinatorial model category is almost never a combinatorial model category! Nonetheless, the main idea in the proof above can be made to work under the assumption that the category of coalgebras for the left half of the functorial (cofibration, trivial fibration)-factorisation system is generated under colimits of small $\lambda$-filtered diagrams of coalgebras whose underlying object in $\Func{\mathbf{2}}{\mathcal{M}}$ is a cofibration in $\Kompakt[\lambda]{\mathcal{M}}$.
It is not clear whether this hypothesis is always satisfied if we only assume that $\mathcal{M}$ is a strongly $\tuple{\kappa, \lambda}$-combinatorial model category, but it is certainly true if $\lambda$ is sufficiently large, because the category of coalgebras for an accessible copointed endofunctor is always accessible (by an analogue of \autoref{thm:strongly.accessible.pointed.endofunctors}) and any accessible functor is strongly $\lambda$-accessible for large enough $\lambda$ (by \autoref{lem:accessible.functors.are.strongly.accessible}).
\end{remark}

\begin{thm}
\label{thm:completeness.for.SM7}
Let $\ul{\mathcal{M}}$ be a locally small simplicially enriched category where the underlying ordinary category $\mathcal{M}$ is equipped with a model structure making it a strongly $\tuple{\kappa, \lambda}$-combinatorial model category.
Assuming the simplicially enriched full subcategory $\Kompakt[\lambda]{\ul{\mathcal{M}}} \subseteq \ul{\mathcal{M}}$ determined by $\Kompakt[\lambda]{\mathcal{M}}$ is closed under cotensor products with finite simplicial sets in $\ul{\mathcal{M}}$, the following are equivalent:
\begin{enumerate}[(i)]
\item $\ul{\mathcal{M}}$ is a simplicial model category.

\item The model structure of $\Kompakt[\lambda]{\ul{\mathcal{M}}}$ satisfies axiom {\LiningNumbers SM7}.
\end{enumerate} 
\end{thm}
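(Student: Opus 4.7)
The strategy is to adapt the argument of \autoref{thm:completeness.for.right.properness}, reducing each of the three clauses of axiom {\LiningNumbers SM7} to an inclusion of strongly $\lambda$-accessible subcategories of $\Func{\mathbf{2}}{\mathcal{M}}$. The direction (i) $\implies$ (ii) is immediate: for any cofibration $j : K \to L$ of finite simplicial sets and any (trivial) fibration $p : X \to Y$ in $\Kompakt[\lambda]{\ul{\mathcal{M}}}$, the pullback-hom $X^L \to X^K \times_{Y^K} Y^L$ stays in $\Kompakt[\lambda]{\ul{\mathcal{M}}}$, since $\Kompakt[\lambda]{\ul{\mathcal{M}}}$ is closed under cotensor products with finite simplicial sets by hypothesis, and $\Kompakt[\lambda]{\mathcal{M}}$ is closed under finite limits by the definition of strong $\tuple{\kappa, \lambda}$-combinatoriality.

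For the converse, I first observe that axiom {\LiningNumbers SM7} for $\ul{\mathcal{M}}$ can be verified by checking only against the generating (trivial) cofibrations of $\cat{\SSet}$, each of which is a morphism $j : K \to L$ between finite simplicial sets. For each such $j$, I consider the pullback-hom endofunctor $\Phi_j$ on $\Func{\mathbf{2}}{\mathcal{M}}$ sending a morphism $p : X \to Y$ to the canonical comparison $X^L \to X^K \times_{Y^K} Y^L$. The key claim is that $\Phi_j$ is strongly $\lambda$-accessible. The tensor $K \otimes \pblank : \mathcal{M} \to \mathcal{M}$ is left adjoint to $\pblank^K$ and sends $\Kompakt[\kappa]{\mathcal{M}}$ into itself, since $K$ is a finite colimit of standard simplices in $\cat{\SSet}$, the tensor preserves colimits in both variables, and $\Kompakt[\kappa]{\mathcal{M}}$ is closed under finite colimits; hence by \autoref{prop:left.adjoints.and.strong.accessibility}, the right adjoint $\pblank^K$ preserves $\kappa$-filtered colimits. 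By the main hypothesis, $\pblank^K$ also preserves $\lambda$-presentability; combining these facts with the closure of $\Kompakt[\lambda]{\mathcal{M}}$ under finite limits and \autoref{prop:presentable.objects.in.diagram.categories} yields that $\Phi_j$ preserves $\kappa$-filtered colimits and sends $\lambda$-presentable morphisms to $\lambda$-presentable morphisms.

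Let $\mathcal{F}$ and $\mathcal{F}'$ be the full subcategories of $\Func{\mathbf{2}}{\mathcal{M}}$ spanned respectively by fibrations and trivial fibrations. \Autoref{prop:accessibility.of.fibrations.and.trivial.fibrations} and \autoref{thm:completeness.for.strongly.accessible.model.categories} imply that both are closed under small $\kappa$-filtered colimits in $\Func{\mathbf{2}}{\mathcal{M}}$, with strongly $\lambda$-accessible inclusions; \autoref{prop:preimage.of.accessible.replete.full.subcategories} then ensures that the preimages $\Phi_j^{-1}(\mathcal{F})$ and $\Phi_j^{-1}(\mathcal{F}')$ inherit these properties. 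Axiom {\LiningNumbers SM7} for $\ul{\mathcal{M}}$ is now exactly the collection of inclusions $\mathcal{F} \subseteq \Phi_j^{-1}(\mathcal{F})$ and $\mathcal{F}' \subseteq \Phi_j^{-1}(\mathcal{F}')$ as $j$ ranges over generating cofibrations, together with $\mathcal{F} \subseteq \Phi_j^{-1}(\mathcal{F}')$ as $j$ ranges over generating trivial cofibrations. By \autoref{prop:strongly.accessible.subcategories}, each such inclusion between strongly $\lambda$-accessible subcategories holds if and only if it holds on $\lambda$-presentable objects, which is precisely the corresponding {\LiningNumbers SM7} condition in $\Kompakt[\lambda]{\ul{\mathcal{M}}}$. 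The main obstacle is the strong $\lambda$-accessibility of $\Phi_j$ — in particular the claim that tensoring with a finite simplicial set preserves $\kappa$-presentability, which rests on a careful unpacking of how $K \otimes X$ is built from finite colimits in the simplicial enrichment.
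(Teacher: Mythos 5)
Your overall architecture\dash reduce via the boundary/horn-inclusion criterion (this is \autoref{prop:scats:SM7ab}, which you should cite explicitly) to a family of inclusions of strongly $\lambda$-accessible subcategories of $\Func{\mathbf{2}}{\mathcal{M}}$, then transfer along $\lambda$-presentable objects exactly as in \autoref{thm:completeness.for.right.properness}\dash is the same as the paper's, which disposes of (ii) \implies (i) by citing \autoref{prop:scats:SM7ab} and \autoref{prop:accessibility.of.fibrations.and.trivial.fibrations}. You have also correctly isolated the one genuinely new ingredient, namely the accessibility of the pullback-cotensor functor $\Phi_j$.

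However, your justification of that ingredient does not hold up. You argue that $K \otimes \pblank$ preserves $\kappa$-presentability because $K$ is a finite colimit of standard simplices and the tensor is cocontinuous in each variable. That only reduces the question to whether $\Delta^n \otimes X$ is $\kappa$-presentable when $X$ is, and there it stalls: $\Delta^n$ is itself a standard simplex, not a finite colimit of copies of $\Delta^0$, so $\Delta^n \otimes X$ is not exhibited as a finite colimit of copies of $X$, and nothing in the hypotheses bounds its presentability rank for an arbitrary simplicial enrichment of $\mathcal{M}$. Note also that what the transfer argument actually needs is weaker: it suffices that $\pblank \cotens K$ preserve $\lambda$-filtered colimits (so that $\Phi_j$ carries the canonical $\lambda$-filtered presentation of a fibration to a $\lambda$-filtered diagram in $\mathcal{F}$, whose colimit lies in $\mathcal{F}$ by closure), and by \autoref{prop:left.adjoints.and.strong.accessibility} this is equivalent to $K \otimes \pblank$ preserving $\lambda$-presentable (not $\kappa$-presentable) objects. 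But even this weaker statement is not a formal consequence of the stated hypothesis, which controls only the effect of the \emph{cotensor} on $\lambda$-presentables\dash by the adjunction calculus that is a logically independent condition. So this preservation property must either be supplied by the particular enrichment at hand or argued separately; your closing remark shows you sensed the difficulty, but the proof as written does not close it.
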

\begin{proof}
(i) \implies (ii).
Immediate, because the model structure of $\Kompakt[\lambda]{\mathcal{M}}$ is the restriction of the model structure of $\mathcal{M}$.

\medskip\noindent
(ii) \implies (i).
Recalling the fact that $\cat{\SSet}$ is a strongly $\tuple{\aleph_0, \aleph_1}$-combinatorial model category, this is a consequence of  propositions \ref{prop:scats:SM7ab} and \ref{prop:accessibility.of.fibrations.and.trivial.fibrations}.
\end{proof}

\begin{remark}
In view of of the above theorem, it should seem very likely that the free $\lambda$-ind-completion of a suitable small simplicial model category will again be a simplicial model category.
To prove this, we require the technology of enriched accessibility introduced by \citet{Kelly:1982} and \citet{Borceux-Quinteriro:1996}; in fact, the only thing we need is to show that the free $\lambda$-ind-completion of a $\lambda$-cocomplete $\cat{\SSet}$-enriched category is a cocomplete $\cat{\SSet}$-enriched category, and this can be done by mimicking the proof for the case of ordinary categories.
The details are left to the reader.
\end{remark}

\appendix

\section{Accessibility}
\label{sect:accessibility}

\needspace{3\baselineskip}
To avoid confusion, let us begin by recalling some basic terminology.

\begin{dfn}
A \strong{regular cardinal} is an infinite cardinal $\kappa$ with the following property:
\begin{itemize}
\item If $\Phi$ is a set of cardinality $< \kappa$ and each element of $\Phi$ is a set of cardinality $< \kappa$, then $\bigcup_{X \in \Phi} X$ is also a set of cardinality $< \kappa$.
\end{itemize}
\end{dfn}

\begin{numpar}
Throughout this section, $\kappa$ is an arbitrary regular cardinal.
\end{numpar}

\begin{dfn}
\ \noprelistbreak
\begin{itemize}
\item A \strong{$\kappa$-small set} is a set of cardinality $< \kappa$.

\item A \strong{$\kappa$-small category} is a category with $< \kappa$ morphisms.

\item A \strong{$\kappa$-small diagram} is a functor whose domain is a $\kappa$-small category.
\end{itemize}
\end{dfn}

\begin{dfn}
\ \noprelistbreak
\begin{itemize}
\item A \strong{$\kappa$-filtered category} is a category $\mathcal{J}$ with the following property: 
\begin{itemize}
\item For each $\kappa$-small diagram $A$ in $\mathcal{J}$, there exist an object $j$ and a cocone $A \hoto \Delta j$.
\end{itemize}
A \strong{$\kappa$-filtered diagram} in a category $\mathcal{C}$ is a functor $\mathcal{J} \to \mathcal{C}$ where $\mathcal{J}$ is a $\kappa$-filtered category.

\item  A \strong{$\kappa$-directed preorder} is a preordered set $X$ that is $\kappa$-filtered when considered as a category, \ie a preorder with the following property:
\begin{itemize}
\item For each $\kappa$-small subset $Y \subseteq X$, there exists an element $x$ of $X$ such that $y \le x$ for all $y$ in $Y$.
\end{itemize}
A \strong{$\kappa$-directed diagram}  in a category $\mathcal{C}$ is a functor $\mathcal{J} \to \mathcal{C}$ where $\mathcal{J}$ is a $\kappa$-directed preorder (considered as a category).
\end{itemize}
It is conventional to say `filtered' (\resp `directed') instead of `$\aleph_0$-filtered' (\resp `$\aleph_0$-directed').
\end{dfn}

\begin{dfn}
\ \noprelistbreak
\begin{itemize}
\item A \strong{cofinal functor} is a functor $F : \mathcal{I} \to \mathcal{J}$ such that, for every object $j$ in $\mathcal{J}$, the comma category $\commacat{j}{F}$ is connected.

\item A \strong{cofinal subcategory} is a subcategory such that the inclusion is a cofinal functor.
\end{itemize}
\end{dfn}

\begin{lem}
\label{lem:very.small.filtered.categories}
If $\mathcal{J}$ is a $\kappa$-small $\kappa$-filtered category, then there exist an object $j$ in $\mathcal{J}$ and an idempotent endomorphism $e : j \to j$ such that the subcategory of $\mathcal{J}$ generated by $e$ is cofinal in $\mathcal{J}$.
\end{lem}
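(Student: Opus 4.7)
The plan is to apply the defining property of $\kappa$-filteredness to the identity functor $\mathrm{id}_\mathcal{J} : \mathcal{J} \to \mathcal{J}$, which is a $\kappa$-small diagram because $\mathcal{J}$ itself is $\kappa$-small. This yields an object $j$ in $\mathcal{J}$ together with a cocone $c : \mathrm{id}_\mathcal{J} \hoto \Delta j$; in other words, morphisms $c_i : i \to j$ for each object $i$ such that $c_{i'} \circ \alpha = c_i$ for every morphism $\alpha : i \to i'$ in $\mathcal{J}$.

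I would then take $e = c_j : j \to j$. The naturality condition of the cocone, applied to the morphism $c_j : j \to j$ itself (\ie taking $\alpha = c_j$), reads $c_j \circ c_j = c_j$, so $e \circ e = e$ and $e$ is idempotent. Because $e$ is idempotent, the subcategory of $\mathcal{J}$ generated by $e$ has the single object $j$ and exactly two morphisms, namely $\id_j$ and $e$.

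It remains to verify cofinality of this subcategory, \ie that for every object $i$ in $\mathcal{J}$ the comma category $\commacat{i}{\iota}$ (where $\iota$ denotes the inclusion) is non-empty and connected. Non-emptiness is immediate because $c_i : i \to j$ is an object. For connectedness, apply the naturality of the cocone to an arbitrary morphism $f : i \to j$: this gives $e \circ f = c_j \circ f = c_i$. Hence every object $f$ of $\commacat{i}{\iota}$ admits a morphism (given by $e$) to the object $c_i$, so the underlying graph of the comma category is connected.

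There is no real obstacle here: once one notices that the identity diagram of $\mathcal{J}$ is itself $\kappa$-small and can therefore be fed into the $\kappa$-filteredness hypothesis, the whole argument is an exercise in naturality. The only point requiring a moment of care is to check that the subcategory $\langle e \rangle$ really consists of just $\{\id_j, e\}$ once $e$ is idempotent, so that cofinality reduces to the single equation $e \circ f = c_i$ holding for all $f : i \to j$.
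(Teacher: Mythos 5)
Your proof is correct and follows essentially the same route as the paper's: feed the identity functor (a $\kappa$-small diagram) into the $\kappa$-filteredness hypothesis, set $e = c_j$, derive idempotency and the relation $e \circ f = c_i$ from cocone naturality, and conclude connectedness of each comma category. The only cosmetic quibble is that the subcategory generated by $e$ has \emph{at most} two morphisms (it could be that $e = \id_j$), but this does not affect the argument.
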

\begin{proof}
Since $\id : \mathcal{J} \to \mathcal{J}$ is a $\kappa$-small diagram in $\mathcal{J}$, there must exist an object $j$ in $\mathcal{J}$ and a cocone $\lambda : \id \hoto \Delta j$.
Let $e = \lambda_j : j \to j$.
Since $\lambda$ is a cocone, we must have $e = e \circ e$, \ie $e : j \to j$ is idempotent.

Let $\mathcal{I}$ be the subcategory of $\mathcal{J}$ generated by $e$ and let $j'$ be any object in $\mathcal{J}$.
We must show that the comma category $\commacat{j'}{\mathcal{I}}$ is connected.
It is inhabited: $\lambda_{j'} : j' \to j$ is an object in $\commacat{j'}{\mathcal{I}}$.
Moreover, given any morphism $f : j' \to j$ in $\mathcal{J}$, we must have $\lambda_{j'} = \lambda_j \circ f = e \circ f$, so $\commacat{j'}{\mathcal{I}}$ is indeed connected.
Thus, $\mathcal{I}$ is a cofinal subcategory of $\mathcal{J}$.
\end{proof}

\begin{lem}[Products of filtered categories]
\label{lem:products.of.filtered.categories}
Let $\seq{ \mathcal{J}_i }{ i \in I }$ be a set of $\kappa$-filtered categories.
\begin{enumerate}[(i)]
\item The product $\mathcal{J} = \prod_{i \in I} \mathcal{J}_i$ is a $\kappa$-filtered category.

\item Each projection $\pi_i : \mathcal{J} \to \mathcal{J}_i$ is a cofinal functor.
\end{enumerate}
\end{lem}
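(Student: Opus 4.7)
The plan is to argue both parts directly from the definitions, exploiting the $\kappa$-filteredness of each $\mathcal{J}_i$ componentwise; no smallness hypothesis on the indexing set $I$ is needed.

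For (i), given any $\kappa$-small diagram $A : \mathcal{K} \to \mathcal{J}$, the composite $\pi_i \circ A : \mathcal{K} \to \mathcal{J}_i$ is again $\kappa$-small, so by $\kappa$-filteredness of $\mathcal{J}_i$ there exist an object $j_i$ in $\mathcal{J}_i$ and a cocone $\pi_i \circ A \hoto \Delta j_i$. Assembling these choices componentwise (invoking the axiom of choice) yields an object $j = \seq{j_i}{i \in I}$ in $\mathcal{J}$ together with a cocone $A \hoto \Delta j$, as required.

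For (ii), I would verify that each comma category $\commacat{j_i}{\pi_i}$ is inhabited and connected. Inhabitedness is immediate: pick any object $j'_k$ in $\mathcal{J}_k$ for each $k \ne i$ (each $\mathcal{J}_k$ is inhabited, being $\kappa$-filtered) and let the $i$-th component of the witness be $j_i$ itself with structure morphism $\id_{j_i}$. For connectedness, given two objects $\tuple{j, f}$ and $\tuple{j', f'}$ of $\commacat{j_i}{\pi_i}$, with $f : j_i \to j(i)$ and $f' : j_i \to j'(i)$ in $\mathcal{J}_i$, the strategy is to exhibit a common target. In the $i$-th component I would apply $\kappa$-filteredness of $\mathcal{J}_i$ twice: first to choose a common target for $j(i)$ and $j'(i)$, and then to postcompose with a further morphism that coequalises the two resulting composites out of $j_i$, producing an object $j''_i$ in $\mathcal{J}_i$ together with morphisms $g_i : j(i) \to j''_i$ and $g'_i : j'(i) \to j''_i$ satisfying $g_i \circ f = g'_i \circ f'$. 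In each other component $k \ne i$ I would simply pick a common target $j''_k$ for $j(k)$ and $j'(k)$ using $\kappa$-filteredness of $\mathcal{J}_k$. Assembling componentwise gives an object $j''$ in $\mathcal{J}$ together with morphisms $j \to j''$ and $j' \to j''$ whose $i$-th components witness the required equality, so $\tuple{j, f}$ and $\tuple{j', f'}$ both map to the common object $\tuple{j'', g_i \circ f}$ in $\commacat{j_i}{\pi_i}$.

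The only step going beyond routine componentwise bookkeeping is the coequalising manoeuvre in the $i$-th component, which is needed precisely because the two morphisms $f$ and $f'$ out of $j_i$ need not become equal after composition with an arbitrary common target for $j(i)$ and $j'(i)$; everything else follows by mechanical assembly.
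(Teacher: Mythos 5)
Your proof is correct and follows essentially the same route as the paper, which simply asserts that cocones and the connectedness of the comma categories can be established componentwise; you have filled in the details, including the (necessary) coequalising step in the $i$-th component, exactly as intended.
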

\begin{proof}
(i).
We may construct cones over $\kappa$-small diagrams in $\mathcal{J}$ componentwise.

\medskip\noindent
(ii).
Similarly, one can show that the comma categories $\commacat{j_i}{\pi_i}$ are connected for all $j_i$ in $\mathcal{J}_i$ and all $i$ in $I$.
\end{proof}

\begin{dfn}
Let $\alpha$ be an ordinal.
An \strong{$\alpha$-chain} in a category $\mathcal{C}$ is a functor $\alpha \to \mathcal{C}$, where we have identified $\alpha$ with the well-ordered set of ordinals $< \alpha$.
\end{dfn}

\begin{remark}
If $\alpha$ is an ordinal with cofinality $\kappa$, then $\alpha$ is a $\kappa$-directed preorder.
In particular, $\alpha$-chains are $\kappa$-directed diagrams.
\end{remark}

\begin{dfn}
A \strong{$\kappa$-accessible category} is a locally small category $\mathcal{C}$ satisfying the following conditions:
\begin{itemize}
\item $\mathcal{C}$ has colimits of all small $\kappa$-filtered diagrams.

\item There is a set $\mathcal{G}$ of $\kappa$-presentable objects in $\mathcal{C}$ such that, for each object $B$ in $\mathcal{C}$, there is a small $\kappa$-filtered diagram in $\mathcal{C}$ whose vertices are in $\mathcal{G}$ and whose colimit is $B$.
\end{itemize}
A \strong{locally $\kappa$-presentable category} is a $\kappa$-accessible category that is also cocomplete.

An \strong{accessible category} (\resp \strong{locally presentable category}) is a category that is $\kappa$-accessible (\resp locally $\kappa$-presentable) for some regular cardinal $\kappa$.
\end{dfn}

\begin{dfn}
A \strong{$\kappa$-accessible functor} is a functor $F : \mathcal{C} \to \mathcal{D}$ where $F$ preserves colimits of small $\kappa$-filtered diagrams and $\mathcal{C}$ is a $\kappa$-accessible category.

An \strong{accessible functor} is a functor that is $\kappa$-accessible for some regular cardinal $\kappa$.
\end{dfn}

\begin{thm}
\label{thm:universal.property.of.free.ind.completion}
Let $\mathcal{B}$ be an essentially small category and let $\kappa$ be a regular cardinal.
There exist a $\kappa$-accessible category $\Ind[\kappa]{\mathcal{B}}$ and a functor $\gamma : \mathcal{B} \to \Ind[\kappa]{\mathcal{B}}$ with the following universal property:
\begin{itemize}
\item For any $\kappa$-accessible category $\mathcal{D}$, the induced functor
\[
\gamma^* : \AccFun[\kappa]{\Ind[\kappa]{\mathcal{B}}}{\mathcal{D}} \to \Func{\mathcal{B}}{\mathcal{D}}
\]
is fully faithful and surjective on objects, where $\AccFun[\kappa]{\Ind[\kappa]{\mathcal{B}}}{\mathcal{D}}$ denotes the full subcategory of $\Func{\Ind[\kappa]{\mathcal{B}}}{\mathcal{D}}$ spanned by the $\kappa$-accessible functors.
\end{itemize}
Moreover, the functor $\gamma : \mathcal{B} \to \Ind[\kappa]{\mathcal{B}}$ is fully faithful and injective on objects.
This is the \strong{free $\kappa$-ind-completion of $\mathcal{B}$}.
\end{thm}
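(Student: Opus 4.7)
The plan is to realise $\Ind[\kappa]{\mathcal{B}}$ concretely as the full subcategory of the presheaf category $\Func{\op{\mathcal{B}}}{\cat{\Set}}$ spanned by those presheaves that arise as colimits of small $\kappa$-filtered diagrams of representables, with $\gamma : \mathcal{B} \to \Ind[\kappa]{\mathcal{B}}$ the corestricted Yoneda embedding. The Yoneda lemma immediately gives fully-faithfulness of $\gamma$, and injectivity on objects can be arranged (if desired) by first passing to a skeleton of $\mathcal{B}$.

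I would then verify that $\Ind[\kappa]{\mathcal{B}}$ is a $\kappa$-accessible category. Since $\kappa$-filtered colimits in $\Func{\op{\mathcal{B}}}{\cat{\Set}}$ are computed pointwise and Yoneda identifies $\Hom[{\Func{\op{\mathcal{B}}}{\cat{\Set}}}]{\gamma B}{X}$ with $X B$, each $\gamma B$ is $\kappa$-presentable in the presheaf category, hence also in $\Ind[\kappa]{\mathcal{B}}$. What remains is to show that $\Ind[\kappa]{\mathcal{B}}$ is itself closed in $\Func{\op{\mathcal{B}}}{\cat{\Set}}$ under small $\kappa$-filtered colimits; this amounts to the Grothendieck-construction observation that an iterated $\kappa$-filtered colimit can be rearranged as a single $\kappa$-filtered colimit. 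Essential smallness of $\mathcal{B}$ then supplies a small dense set of $\kappa$-presentable generators, completing the verification.

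For the universal property, given a functor $F : \mathcal{B} \to \mathcal{D}$ into a $\kappa$-accessible category, I would define the extension by the left Kan-type formula
\[
\tilde F (C) = \indlim_{\commacat{\gamma}{C}} F \circ P,
\]
where $P : \commacat{\gamma}{C} \to \mathcal{B}$ is the evident projection. The key structural claim, which must be established first, is that for every $C$ in $\Ind[\kappa]{\mathcal{B}}$ the comma category $\commacat{\gamma}{C}$ is essentially small and $\kappa$-filtered, and the tautological cocone $\gamma \circ P \hoto \Delta C$ is a colimiting cocone in $\Ind[\kappa]{\mathcal{B}}$. From this the colimit defining $\tilde F (C)$ exists in $\mathcal{D}$, functoriality in $C$ is automatic, and a standard cofinality argument shows $\tilde F$ preserves small $\kappa$-filtered colimits, \ie it is $\kappa$-accessible; $\tilde F \gamma \cong F$ is then immediate from the existence of an initial object in $\commacat{\gamma}{\gamma B}$.

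The main obstacle will be the canonical-colimit presentation of objects of $\Ind[\kappa]{\mathcal{B}}$ and the resulting fully-faithfulness of $\gamma^*$. Proving that $\commacat{\gamma}{C}$ is $\kappa$-filtered \dash when $C$ is presented \emph{a priori} only as some $\kappa$-filtered colimit of representables, with no canonical choice \dash uses that colimiting cocones detect factorisations through their components for $\kappa$-presentable test objects, together with the $\kappa$-presentability of each $\gamma B$; one must chase a $\kappa$-small diagram in $\commacat{\gamma}{C}$ down to the indexing category of the given presentation, solve the lifting problem there, and lift back. Once this is in place, fully-faithfulness of $\gamma^*$ falls out: any $\kappa$-accessible $G : \Ind[\kappa]{\mathcal{B}} \to \mathcal{D}$ preserves the canonical colimits $\indlim_{\commacat{\gamma}{C}} \gamma \circ P \cong C$, so $G(C)$ is determined up to unique isomorphism by the restriction $G \gamma$, which simultaneously shows that the formula for $\tilde F$ is forced and that $\gamma^* \tilde F \cong F$ naturally, giving surjectivity on objects of $\gamma^*$.
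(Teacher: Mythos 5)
Your proposal is correct and is essentially the standard proof of this result; the paper does not prove the theorem itself but cites Theorem~2.26 of \citep{LPAC}, whose argument proceeds exactly as you describe (representables inside presheaves, the canonical $\kappa$-filtered colimit presentation of each object via the comma category $\commacat{\gamma}{C}$, and extension by a left Kan formula). Two small points to tighten in a full write-up: the stated \emph{surjectivity} (not just essential surjectivity) of $\gamma^*$ on objects requires redefining $\tilde{F}$ on the image of $\gamma$ using the injectivity of $\gamma$ on objects so that $\tilde{F} \gamma = F$ strictly, and fully faithfulness of $\gamma^*$ also needs the bijection on natural transformations, which follows from the same canonical-colimit argument you give for objects.
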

\begin{proof} \openproof
See Theorem 2.26 in \citep{LPAC}.
\end{proof}

\begin{prop}
\label{prop:dense.generators.for.accessible.categories}
If $\mathcal{C}$ is a locally small $\kappa$-accessible category, then the Yoneda representation
\[
\mathcal{C} \to \Func{\op{\Kompakt[\kappa]{\mathcal{C}}}}{\cat{\Set}}
\]
is a $\kappa$-accessible fully faithful functor.
\end{prop}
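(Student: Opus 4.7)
The plan is to establish two properties of the Yoneda representation $Y \colon \mathcal{C} \to \Func{\op{\Kompakt[\kappa]{\mathcal{C}}}}{\cat{\Set}}$, where $Y C = \Hom[\mathcal{C}]{\blank}{C}|_{\Kompakt[\kappa]{\mathcal{C}}}$: namely that $Y$ preserves colimits of small $\kappa$-filtered diagrams (which, together with $\kappa$-accessibility of $\mathcal{C}$, gives $\kappa$-accessibility of $Y$), and that $Y$ is fully faithful.

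First I would dispose of $\kappa$-accessibility. Colimits in $\Func{\op{\Kompakt[\kappa]{\mathcal{C}}}}{\cat{\Set}}$ are computed pointwise, and at each $K$ in $\Kompakt[\kappa]{\mathcal{C}}$ the presheaf $YC$ evaluates to $\Hom[\mathcal{C}]{K}{C}$. By definition of $\kappa$-presentability, $\Hom[\mathcal{C}]{K}{\blank}$ preserves colimits of small $\kappa$-filtered diagrams, so $Y$ preserves them as well.

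For fully faithfulness, fix objects $C, D$ in $\mathcal{C}$. Since $\mathcal{C}$ is $\kappa$-accessible, I can pick a small $\kappa$-filtered diagram $X \colon \mathcal{J} \to \Kompakt[\kappa]{\mathcal{C}}$ with $C \cong \indlim_\mathcal{J} X$. On the source side, the universal property of the colimit gives $\Hom[\mathcal{C}]{C}{D} \cong \prolim_{\op{\mathcal{J}}} \Hom[\mathcal{C}]{X}{D}$. On the target side, the first step yields $YC \cong \indlim_\mathcal{J} YX$, so $\hpty{YC}{YD} \cong \prolim_{\op{\mathcal{J}}} \hpty{YXj}{YD}$. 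But each $YXj$ is by construction the representable presheaf $\Hom[\Kompakt[\kappa]{\mathcal{C}}]{\blank}{Xj}$ (since $Xj$ lies in $\Kompakt[\kappa]{\mathcal{C}}$), so the ordinary Yoneda lemma in $\Func{\op{\Kompakt[\kappa]{\mathcal{C}}}}{\cat{\Set}}$ gives $\hpty{YXj}{YD} \cong YD(Xj) = \Hom[\mathcal{C}]{Xj}{D}$. Splicing the two chains of isomorphisms together produces $\Hom[\mathcal{C}]{C}{D} \cong \hpty{YC}{YD}$.

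The main obstacle \dash and really the only non-formal part \dash is verifying that this composite isomorphism coincides with the comparison map $\Hom[\mathcal{C}]{C}{D} \to \hpty{YC}{YD}$ induced by functoriality of $Y$. This amounts to unwinding the universal properties: the isomorphism on the source uses the colimiting cocone components $Xj \to C$, the one on the target uses their images $YXj \to YC$, and the Yoneda lemma step sends a natural transformation to its value at the identity of $Xj$. Chasing an element $f \colon C \to D$ around both paths reduces to the statement that $Y$ applied to $f$ is the natural transformation whose $K$-component is post-composition by $f$, which is the definition of $Y$ on morphisms; once this bookkeeping is done, fully faithfulness is established and the proof is complete.
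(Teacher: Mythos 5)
Your proof is correct. The paper does not actually prove this proposition itself\dash it defers to Proposition 2.1.8 of Makkai--Par\'e and Proposition 2.8 of Ad\'amek--Rosick\'y\dash and your argument is essentially the standard one found there: $\kappa$-accessibility of the Yoneda representation is immediate from pointwise computation of colimits and the definition of $\kappa$-presentability, and full faithfulness follows by writing $C$ as a small $\kappa$-filtered colimit of $\kappa$-presentable objects and applying the Yoneda lemma to each vertex. You were also right to single out the compatibility of the composite bijection with the canonical comparison map as the only point needing care; the uniqueness clause in the universal property of $\indlim_{\mathcal{J}} Y X$ (both $Y f$ and the transformation produced by your chain of isomorphisms restrict to postcomposition by $f \circ c_j$ on each $Y X j$) settles it exactly as you indicate.
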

\begin{proof} \openproof
See Proposition~2.1.8 in \citep{Makkai-Pare:1989} or Proposition~2.8 in \citep{LPAC}.
\end{proof}

\begin{prop}
\label{prop:left.adjoints.and.strong.accessibility}
Let $\kappa$ and $\lambda$ be regular cardinals, with $\kappa \le \lambda$, let $\mathcal{C}$ be a $\kappa$-accessible category, and let $\mathcal{D}$ be any category.
Given an adjunction of the form below,
\[
F \dashv G : \mathcal{D} \to \mathcal{C}
\]
the following are equivalent:
\begin{enumerate}[(i)]
\item $F : \mathcal{C} \to \mathcal{D}$ sends $\kappa$-presentable objects in $\mathcal{C}$ to $\lambda$-presentable objects in $\mathcal{D}$.

\item $G : \mathcal{D} \to \mathcal{C}$ preserves colimits of small $\lambda$-filtered diagrams.
\end{enumerate}
\end{prop}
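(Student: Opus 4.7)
The plan is to prove the equivalence by a direct hom-set chase through the adjunction, reducing questions about isomorphisms in $\mathcal{C}$ to bijections of hom-sets out of $\kappa$-presentable objects via the density of $\Kompakt[\kappa]{\mathcal{C}}$ in the $\kappa$-accessible category $\mathcal{C}$ (\autoref{prop:dense.generators.for.accessible.categories}). The only ambient colimit existence I need is supplied automatically by the $\kappa$-accessibility of $\mathcal{C}$ together with the observation that, since $\kappa \le \lambda$, every $\lambda$-filtered category is $\kappa$-filtered.

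For (i) $\implies$ (ii), let $Y : \mathcal{J} \to \mathcal{D}$ be a small $\lambda$-filtered diagram whose colimit $Y^*$ exists in $\mathcal{D}$. Then the composite $GY : \mathcal{J} \to \mathcal{C}$ is a small $\kappa$-filtered diagram, which admits a colimit $Z$ in $\mathcal{C}$; let $c : Z \to G Y^*$ denote the canonical comparison morphism. For any $\kappa$-presentable object $A$ in $\mathcal{C}$, the adjunction and the hypothesis that $F A$ is $\lambda$-presentable yield the chain of natural bijections
\[
\Hom[\mathcal{C}]{A}{G Y^*} \cong \Hom[\mathcal{D}]{F A}{Y^*} \cong \indlim_{\mathcal{J}} \Hom[\mathcal{D}]{F A}{Y} \cong \indlim_{\mathcal{J}} \Hom[\mathcal{C}]{A}{G Y} \cong \Hom[\mathcal{C}]{A}{Z},
\]
where the final bijection uses that $\Hom[\mathcal{C}]{A}{\blank}$ preserves all small $\kappa$-filtered colimits. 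Invoking \autoref{prop:dense.generators.for.accessible.categories}, this shows that $c$ is an isomorphism, so $G$ preserves this colimit.

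For (ii) $\implies$ (i), the same chase runs in reverse. Let $A$ be a $\kappa$-presentable object in $\mathcal{C}$, and let $Y : \mathcal{J} \to \mathcal{D}$ be any small $\lambda$-filtered diagram with colimit $Y^*$ in $\mathcal{D}$. Using the adjunction, hypothesis (ii), and the fact that a $\kappa$-presentable object in $\mathcal{C}$ is a fortiori $\lambda$-presentable (since $\kappa \le \lambda$), I compute
\[
\Hom[\mathcal{D}]{F A}{Y^*} \cong \Hom[\mathcal{C}]{A}{G Y^*} \cong \Hom[\mathcal{C}]{A}{\indlim_{\mathcal{J}} G Y} \cong \indlim_{\mathcal{J}} \Hom[\mathcal{C}]{A}{G Y} \cong \indlim_{\mathcal{J}} \Hom[\mathcal{D}]{F A}{Y},
\]
so $F A$ is indeed $\lambda$-presentable in $\mathcal{D}$.

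There is no real obstacle: the argument is purely formal, and the only subtlety worth checking is that the auxiliary colimit $\indlim_{\mathcal{J}} G Y$ appearing in the forward direction is guaranteed to exist in $\mathcal{C}$, which is immediate from $\kappa$-accessibility together with $\kappa \le \lambda$.
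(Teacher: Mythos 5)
Your proposal is correct and follows essentially the same route as the paper: an adjunction hom-set chase combined with the fact that the restricted Yoneda representation $\mathcal{C} \to \Func{\op{\Kompakt[\kappa]{\mathcal{C}}}}{\cat{\Set}}$ is fully faithful and reflects the relevant colimits (\autoref{prop:dense.generators.for.accessible.categories}). Your explicit introduction of the auxiliary colimit $Z$ and the comparison morphism $c$ merely makes precise a step the paper leaves implicit.
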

\begin{proof} 
(i) \implies (ii).
Given a $\kappa$-presentable object $C$ in $\mathcal{C}$ and a small $\lambda$-filtered diagram $B : \mathcal{J} \to \mathcal{D}$, observe that
\begin{multline*}
\Hom[\mathcal{C}]{C}{\textstyle G \indlim_\mathcal{J} B} 
  \cong \Hom[\mathcal{D}]{F C}{\textstyle \indlim_\mathcal{J} B}
  \cong \textstyle \indlim_\mathcal{J} \Hom[\mathcal{C}]{F C}{B} \\ 
  \cong \textstyle \indlim_\mathcal{J} \Hom[\mathcal{C}]{C}{G B}
  \cong \Hom[\mathcal{C}]{C}{\textstyle \indlim_\mathcal{J} G B}
\end{multline*}
because $F C$ is a $\lambda$-presentable object in $\mathcal{D}$; but $\kappa$-accessibility of $\mathcal{C}$ implies that the Yoneda representation $\mathcal{C} \to \Func{\op{\Kompakt[\kappa]{\mathcal{C}}}}{\cat{\Set}}$ is fully faithful and reflects colimits of small $\kappa$-filtered diagrams, so this is enough to conclude that $G$ preserves colimits of small $\lambda$-filtered diagrams.

\medskip\noindent
(ii) \implies (i).
Given a $\kappa$-presentable object $C$ in $\mathcal{C}$ and a small $\lambda$-filtered diagram $B : \mathcal{J} \to \mathcal{D}$, observe that
\begin{multline*}
\Hom[\mathcal{D}]{F C}{\textstyle \indlim_\mathcal{J} B}
  \cong \Hom[\mathcal{C}]{C}{\textstyle G \indlim_\mathcal{J} B} 
  \cong \Hom[\mathcal{C}]{C}{\textstyle \indlim_\mathcal{J} G B} \\
  \cong \textstyle \indlim_\mathcal{J} \Hom[\mathcal{C}]{C}{G B} 
  \cong \textstyle \indlim_\mathcal{J} \Hom[\mathcal{C}]{F C}{B}
\end{multline*}
and thus $F C$ is indeed a $\lambda$-presentable object in $\mathcal{D}$.
\end{proof}
\section{Factorisation systems}
\label{sect:factorisation}

\begin{dfn}
A \strong{weak factorisation system} for a category $\mathcal{C}$ is a pair $\tuple{\mathcal{L}, \mathcal{R}}$ of subclasses of $\mor \mathcal{C}$ satisfying these conditions:
\begin{itemize}
\item For each morphism $f$ in $\mathcal{C}$ there exists a pair $\tuple{g, h}$ with $g \in \mathcal{L}$ and $h \in \mathcal{R}$ such that $f = h \circ g$. Such a pair is a \strong{$\tuple{\mathcal{L}, \mathcal{R}}$-factorisation} of $f$.

\item We have $\mathcal{L} = \llpclass{\mathcal{R}}$, \ie a morphism is in $\mathcal{L}$ if and only if it has the left lifting property with respect to every morphism in $\mathcal{R}$.

\item We have $\mathcal{R} = \rlpclass{\mathcal{L}}$, \ie a morphism is in $\mathcal{R}$ if and only if it has the right lifting property with respect to every morphism in $\mathcal{L}$.
\end{itemize}
\end{dfn}

\begin{remark}
Obviously, $\tuple{\mathcal{L}, \mathcal{R}}$ is a weak factorisation system for $\mathcal{C}$ if and only if $\tuple{\op{\mathcal{R}}, \op{\mathcal{L}}}$ is a weak factorisation system for $\op{\mathcal{C}}$.
\end{remark}

\begin{prop}[The retract argument]
\label{prop:weak.factorisation.systems.and.retracts}
Let $\mathcal{C}$ be a category and let $\tuple{\mathcal{L}, \mathcal{R}}$ be a pair of subclasses of $\mor \mathcal{C}$ such that $\mathcal{L} \subseteq \llpclass{\mathcal{R}}$ and $\mathcal{R} \subseteq \rlpclass{\mathcal{L}}$. If every morphism in $\mathcal{C}$ admits an $\tuple{\mathcal{L}, \mathcal{R}}$-factorisation, then the following are equivalent:
\begin{enumerate}[(i)]
\item $\tuple{\mathcal{L}, \mathcal{R}}$ is a weak factorisation system.

\item $\mathcal{L}$ and $\mathcal{R}$ are both closed under retracts in $\mathcal{C}$.
\end{enumerate}
\end{prop}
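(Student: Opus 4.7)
The plan is to handle the two implications separately, noting up front that in each direction one half of the required equalities $\mathcal{L} = \llpclass{\mathcal{R}}$ and $\mathcal{R} = \rlpclass{\mathcal{L}}$ is already given by hypothesis, so only the reverse inclusions demand argument.

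For (i) $\Rightarrow$ (ii), I would invoke the standard diagram-chase showing that any class cut out by a one-sided lifting property is automatically closed under retracts in the arrow category. Concretely: given a retract datum expressing $f'$ as a retract of $f$ and a lifting problem for $f'$ against some $h \in \mathcal{R}$, I would pull the problem back along the retract maps to obtain a lifting problem for $f$ against $h$, solve it using $f \in \llpclass{\mathcal{R}}$, and then push the solution forward to recover the required filler for $f'$. Dually for $\mathcal{R} = \rlpclass{\mathcal{L}}$. This part is purely formal.

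For (ii) $\Rightarrow$ (i) I would run the eponymous \emph{retract argument} to promote $\mathcal{L} \subseteq \llpclass{\mathcal{R}}$ to an equality; the other half $\rlpclass{\mathcal{L}} \subseteq \mathcal{R}$ is then formally dual. Given $f : A \to C$ in $\llpclass{\mathcal{R}}$, I would use the factorisation hypothesis to write $f = h \circ g$ with $g : A \to B$ in $\mathcal{L}$ and $h : B \to C$ in $\mathcal{R}$, and then feed the square with top edge $g$, bottom edge $\id_C$, left edge $f$, right edge $h$ into the relation $f \llpwrt h$ to obtain a diagonal filler $k : C \to B$ with $k \circ f = g$ and $h \circ k = \id_C$. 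These two identities are precisely what is needed to exhibit $f$ as a retract of $g$ in the arrow category, via the inclusion $(\id_A, k) : f \to g$ and the retraction $(\id_A, h) : g \to f$. Closure of $\mathcal{L}$ under retracts then forces $f \in \mathcal{L}$.

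There is no genuine obstacle: the whole proof amounts to two diagram chases. The only point meriting a moment of attention is orienting the retract diagram in the second direction correctly, so that the filler $k$ produced by the lifting property plays the role of the lower arrow of the section and $h$ plays the role of the lower arrow of the retraction; the equations $k \circ f = g$ and $h \circ k = \id_C$ then make both squares commute simultaneously, and the rest is immediate.
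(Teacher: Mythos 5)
Your proof is correct: both diagram chases are set up properly, and in particular the filler $k$ obtained from $f \llpwrt h$ does satisfy $k \circ f = g$ and $h \circ k = \id$, which are exactly the commutativity conditions exhibiting $f$ as a retract of $g$ in the arrow category. The paper itself gives no argument but defers to Observation~1.3 of the cited reference, and what you have written is precisely that standard argument, so there is nothing further to compare.
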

\begin{proof} \openproof
See Observation 1.3 in \citep{AHRT:2002b}.
\end{proof}

\begin{numpar}
Let $\mathbf{2}$ be the category $\set{ 0 \to 1 }$ and let $\mathbf{3}$ be $\set{ 0 \to 1 \to 2 }$. Thus, given a category $\mathcal{C}$, the functor category $\Func{\mathbf{2}}{\mathcal{C}}$ is the category of arrows and commutative squares in $\mathcal{C}$. There are three embeddings $\delta^0, \delta^1, \delta^2 : \mathbf{2} \to \mathbf{3}$:
\begin{align*}
\delta^0 \argp{0} & = 1 &
\delta^1 \argp{0} & = 0 &
\delta^2 \argp{0} & = 0 \\
\delta^0 \argp{1} & = 2 &
\delta^1 \argp{1} & = 2 &
\delta^2 \argp{1} & = 1
\end{align*}
These then induce (by precomposition) three functors $d_0, d_1, d_2 : \Func{\mathbf{3}}{\mathcal{C}} \to \Func{\mathbf{2}}{\mathcal{C}}$.
\end{numpar}

\begin{dfn}
A \strong{functorial factorisation system} on a category $\mathcal{C}$ is a pair of functors $L, R : \Func{\mathbf{2}}{\mathcal{C}} \to \Func{\mathbf{2}}{\mathcal{C}}$ for which there exists a (necessarily unique) functor $F : \Func{\mathbf{2}}{\mathcal{C}} \to \Func{\mathbf{3}}{\mathcal{C}}$ satisfying the following equations:
\begin{align*}
d_2 F & = L &
d_1 F & = \id_{\Func{\mathbf{2}}{\mathcal{C}}} &
d_0 F & = R
\end{align*}
A \strong{functorial weak factorisation system} on $\mathcal{C}$ is a weak factorisation system $\tuple{\mathcal{L}, \mathcal{R}}$ together with a functorial factorisation system $\tuple{L, R}$ such that $L f \in \mathcal{L}$ and $R f \in \mathcal{R}$ for all morphisms $f$ in $\mathcal{C}$. We will often abuse notation and refer to the functorial factorisation system $\tuple{L, R}$ as a functorial weak factorisation system, omitting mention of the weak factorisation system $\tuple{\mathcal{L}, \mathcal{R}}$.
\end{dfn}

The following characterisation of functorial weak factorisation systems is essentially a generalisation of the retract argument (\autoref{prop:weak.factorisation.systems.and.retracts}).

\begin{thm}
\label{thm:functorial.weak.factorisation.systems}
Let $\tuple{L, R}$ be a functorial factorisation system on a category $\mathcal{C}$. The following are equivalent:
\begin{enumerate}[(i)]
\item For any two morphisms in $\mathcal{C}$, say $h$ and $k$, $L k \llpwrt R h$.

\item $\tuple{\mathcal{L}, \mathcal{R}}$ is an weak factorisation system on $\mathcal{C}$ extending $\tuple{L, R}$, where:
\begin{align*}
\mathcal{L} & = \set{ g \in \mor \mathcal{C} }{ \exists i \in \mor \mathcal{C} . i \circ g = L g \land R g \circ i = \id_{\codom g}  } \\
\mathcal{R} & = \set{ f \in \mor \mathcal{C} }{ \exists r \in \mor \mathcal{C} . f \circ r = R f \land r \circ L f = \id_{\dom f} }
\end{align*}

\item There is a weak factorisation system $\tuple{\mathcal{L}, \mathcal{R}}$ extending $\tuple{L, R}$.
\end{enumerate}
\end{thm}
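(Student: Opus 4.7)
The plan is to prove the cycle (iii) \implies (i) \implies (ii) \implies (iii). The implication (ii) \implies (iii) is trivial, and for (iii) \implies (i) the point is that any weak factorisation system $\tuple{\mathcal{L}, \mathcal{R}}$ extending $\tuple{L, R}$ automatically satisfies $L k \in \mathcal{L}$ and $R h \in \mathcal{R}$ for all morphisms $h, k$, so the defining property of a weak factorisation system delivers $L k \llpwrt R h$ at once. All the substantive work lies in (i) \implies (ii).

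For (i) \implies (ii), the crucial observation is that hypothesis (i), applied to $L g$ regarded as a morphism in its own right, forces $L g \in \mathcal{L}$ for every morphism $g$. Indeed, applying the functorial factorisation to $L g$ yields $L g = R \argp{L g} \circ L \argp{L g}$, and the square with top row $L \argp{L g}$, left column $L g$, right column $R \argp{L g}$, and bottom row the identity commutes; the lifting $L \argp{L g} \llpwrt R \argp{L g}$ from (i) then produces a diagonal filler $i$ satisfying $i \circ L g = L \argp{L g}$ and $R \argp{L g} \circ i = \id$, which is precisely the data required by the definition of $\mathcal{L}$. Dually, $R g \in \mathcal{R}$ for every $g$, so the given functorial factorisation $f = R f \circ L f$ is automatically an $\tuple{\mathcal{L}, \mathcal{R}}$-factorisation and the factorisation axiom for a weak factorisation system is satisfied.

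It remains to verify $\mathcal{L} = \llpclass{\mathcal{R}}$ and $\mathcal{R} = \rlpclass{\mathcal{L}}$. For $\mathcal{L} \subseteq \llpclass{\mathcal{R}}$, the defining condition of $\mathcal{L}$ exhibits every $g \in \mathcal{L}$ as a retract in $\Func{\mathbf{2}}{\mathcal{C}}$ of $L g$, and dually every $f \in \mathcal{R}$ is a retract of $R f$; since lifting classes are closed under retracts on both sides and (i) gives $L g \llpwrt R f$, we obtain $g \llpwrt f$. For the reverse inclusion, take $g \in \llpclass{\mathcal{R}}$; since the previous paragraph shows $R g \in \mathcal{R}$, the commutative square expressing $g = R g \circ L g$ with the identity on the bottom row admits a diagonal $i$ by $g \llpwrt R g$, and this $i$ is exactly the required witness for $g \in \mathcal{L}$. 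The equality $\mathcal{R} = \rlpclass{\mathcal{L}}$ is proved dually. The main obstacle to watch for is simply spotting the right square to which the lifting hypothesis should be applied \dash most conspicuously in the bootstrapping step that shows $L g \in \mathcal{L}$ \dash but once those squares are written down the rest is routine bookkeeping with closure of lifting classes under retracts.
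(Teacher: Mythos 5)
The paper offers no argument of its own for this theorem \dash its proof is simply a pointer to Theorem 2.4 in \citep{Rosicky-Tholen:2002} \dash so your proposal has to be judged against the standard argument rather than against anything in the text. Your strategy is exactly that standard argument, and it is sound: (ii) \implies (iii) is trivial; (iii) \implies (i) follows from the definition of a weak factorisation system extending $\tuple{L, R}$; and for (i) \implies (ii) you bootstrap $L g \in \mathcal{L}$ and $R g \in \mathcal{R}$, exhibit each $g \in \mathcal{L}$ as a retract of $L g$ in $\Func{\mathbf{2}}{\mathcal{C}}$ (and dually for $\mathcal{R}$) to get $\mathcal{L} \subseteq \llpclass{\mathcal{R}}$, and use a filler of the square expressing $g = R g \circ L g$ over the identity for the reverse inclusion. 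The one thing to repair is the label on the lifting instance in the bootstrapping step: the square you describe has left column $L g$ and right column $R \argp{L g}$, so the filler $i$ with $i \circ L g = L \argp{L g}$ and $R \argp{L g} \circ i = \id$ is produced by the instance $L g \llpwrt R \argp{L g}$ of (i), obtained by taking $k = g$ and $h = L g$, and not by $L \argp{L g} \llpwrt R \argp{L g}$ as written; a lift of $L \argp{L g}$ against anything would be a morphism out of the codomain of $L \argp{L g}$ rather than out of the codomain of $L g$, so the instance you name cannot yield the $i$ of the stated shape. Since the correct instance is equally a consequence of (i), this is a one-word fix and the rest of the proof stands.
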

\begin{proof} \openproof
See Theorem 2.4 in \citep{Rosicky-Tholen:2002}.
\end{proof}

We can rephrase the above theorem in the language of (co)algebras for (co)pointed endofunctors. This will be essential in our proof of \autoref{prop:accessibility.of.right.class.of.accessible.fwfs}.

\begin{prop}
\label{prop:algebras.and.coalgebras.for.fwfs}
Let $\tuple{L, R}$ be a functorial factorisation system on $\mathcal{C}$ and let $\lambda : \id_{\Func{\mathbf{2}}{\mathcal{C}}} \hoto R$ and $\rho : L \hoto \id_{\Func{\mathbf{2}}{\mathcal{C}}}$ be the natural transformations whose component at an object $f$ in $\Func{\mathbf{2}}{\mathcal{C}}$ correspond to the following commutative squares in $\mathcal{C}$:
\[
\mmhfill
\begin{tikzcd}
\bullet \dar[swap]{f} \rar{L f} &
\bullet \dar{R f} \\
\bullet \rar[equals] &
\bullet
\end{tikzcd}
\mmhfill
\begin{tikzcd}
\bullet \dar[swap]{L f} \rar[equals] &
\bullet \dar{f} \\
\bullet \rar[swap]{R f} &
\bullet
\end{tikzcd}
\mmhfill
\]
Suppose $\tuple{L, R}$ extends to a functorial weak factorisation system. Then the following are equivalent for a morphism $g : Z \to W$ in $\mathcal{C}$:
\begin{enumerate}[(i)]
\item The morphism $g$ is in the left class of the induced weak factorisation system.

\item There exists a morphism $i$ in $\mathcal{C}$ such that the diagram below commutes:
\[
\begin{tikzcd}
Z \dar[swap]{g} \rar[equals] &
Z \dar[swap]{L g} \rar[equals] &
Z \dar{g} \\
W \rar{i} \arrow[bend right=20, swap]{rr}{\id} &
\bullet \rar{R g} &
W
\end{tikzcd}
\]

\item The object $g$ in $\Func{\mathbf{2}}{\mathcal{C}}$ admits a coalgebra structure for the copointed endofunctor $\tuple{L, \rho}$.
\end{enumerate}
\needspace{2.5\baselineskip}
Dually, the following are equivalent for a morphism $f : X \to Y$ in $\mathcal{C}$:
\begin{enumerate}[(i\prime)]
\item The morphism $f$ is in the right class of the induced weak factorisation system.

\item There exists a morphism $r$ in $\mathcal{C}$ such that the diagram below commutes:
\[
\begin{tikzcd}
X \dar[swap]{f} \rar[swap]{L f} \arrow[bend left=20]{rr}{\id} &
\bullet \dar{R f} \rar[swap]{r} &
X \dar{f} \\
Y \rar[equals] &
Y \rar[equals] &
Y
\end{tikzcd}
\]

\item The object $f$ in $\Func{\mathbf{2}}{\mathcal{C}}$ admits an algebra structure for the pointed endofunctor $\tuple{R, \lambda}$.
\end{enumerate}
\end{prop}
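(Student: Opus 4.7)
The plan is to establish the three equivalences by showing (i) $\iff$ (ii) directly from the characterisation of functorial weak factorisation systems and then (ii) $\iff$ (iii) by unpacking the definition of a coalgebra.

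First I would note that (i) $\iff$ (ii) is essentially a reformulation of the description of the left class given in \autoref{thm:functorial.weak.factorisation.systems}. Indeed, commutativity of the two squares in the displayed diagram says precisely that $i \circ g = L g$ and $R g \circ i = \id_W$; so the existence of such an $i$ is identical to the condition that $g$ lies in the left class $\mathcal{L}$ as described in clause~(ii) of \autoref{thm:functorial.weak.factorisation.systems}.

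For (ii) $\iff$ (iii), the main task is simply to unwind what it means to give a $\tuple{L, \rho}$-coalgebra structure on the object $g$ in $\Func{\mathbf{2}}{\mathcal{C}}$. Such a structure is a morphism $\alpha : g \to L g$ in $\Func{\mathbf{2}}{\mathcal{C}}$ satisfying $\rho_g \circ \alpha = \id_g$. A morphism $\alpha : g \to L g$ is a commutative square with top edge some $u : Z \to Z$ and bottom edge some $v : W \to \bullet$ satisfying $L g \circ u = v \circ g$. The counit condition $\rho_g \circ \alpha = \id_g$, after unpacking the component of $\rho$ at $g$ (whose top edge is $\id_Z$ and bottom edge is $R g$), forces $u = \id_Z$ and $R g \circ v = \id_W$. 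Thus a coalgebra structure is exactly the choice of a morphism $v : W \to \bullet$ with $v \circ g = L g$ and $R g \circ v = \id_W$, which is the same data as the $i$ in (ii).

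The dual equivalences (i\prime) $\iff$ (ii\prime) $\iff$ (iii\prime) are obtained by the same argument applied to $\op{\mathcal{C}}$, using that a $\tuple{R, \lambda}$-algebra structure on $f$ is an arrow $r$ in the opposite direction subject to $\lambda_f \circ \beta = \id_f$, and a similar diagram chase. No step is genuinely hard; the only thing worth being careful about is correctly identifying the components of the natural transformations $\rho$ and $\lambda$ when one writes out the unit/counit equations, so that the resulting constraints on the top and bottom edges of the square $\alpha$ (respectively $\beta$) are read off correctly.
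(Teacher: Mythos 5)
Your proof is correct and takes essentially the same route as the paper's: the paper establishes (i) $\Rightarrow$ (ii) by lifting $g$ against $R g$ and (ii) $\Rightarrow$ (i) by the retract argument, which is precisely the content of clause (ii) of \autoref{thm:functorial.weak.factorisation.systems} that you invoke instead, and it likewise disposes of (ii) $\Leftrightarrow$ (iii) as nothing more than the definition of $\tuple{L, \rho}$-coalgebra unwound, exactly as you do. The only slip is in your parenthetical description of the dual half: the unit law for an $\tuple{R, \lambda}$-algebra with structure map $\beta : R f \to f$ reads $\beta \circ \lambda_f = \id_f$ rather than $\lambda_f \circ \beta = \id_f$ (the latter is an endomorphism of $R f$, so the equation does not typecheck), but since that half follows by duality in any case this does not affect the argument.
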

\begin{proof}
(i) \implies (ii). Consider the following commutative diagram in $\mathcal{C}$:
\[
\begin{tikzcd}
Z \dar[swap]{g} \rar{L g} &
\bullet \dar{R g} \\
W \rar[swap]{\id} &
W
\end{tikzcd}
\]
Thus, a morphism $i$ of the required form exists in $\mathcal{C}$ as soon as $g \llpwrt R g$.

\medskip\noindent
(ii) \iff (iii). This is simply the definition of $\tuple{L, \rho}$-coalgebra.

\medskip\noindent
(ii) \implies (i). By definition, the morphism $L f$ is in the left class of the induced weak factorisation system; but the given diagram exhibits $f$ as a retract of $L f$, so we may apply \autoref{prop:weak.factorisation.systems.and.retracts} to deduce that $f$ is also in the left class.
\end{proof}

\begin{dfn}
A weak factorisation system $\tuple{\mathcal{L}, \mathcal{R}}$ on a category $\mathcal{C}$ is \strong{cofibrantly generated} by a subset $\mathcal{I} \subseteq \mor \mathcal{C}$ if $\mathcal{R} = \rlpclass{\mathcal{I}}$.
\end{dfn}
\section{Model structures}
\label{sect:model.structures}

For the purposes of this paper, it will be convenient to use the following definition of model category:

\begin{dfn}
A \strong{model structure} on a category $\mathcal{M}$ is a triple $\tuple{\mathcal{C}, \mathcal{W}, \mathcal{F}}$ of subclasses of $\mor \mathcal{M}$ satifying the following conditions:
\begin{itemize}
\item $\mathcal{W}$ has the 2-out-of-3 property in $\mathcal{M}$, \ie given a commutative diagram in $\mathcal{M}$ of the form below,
\[
\begin{tikzcd}
\bullet \drar \rar &
\bullet \dar \\
&
\bullet
\end{tikzcd}
\]
if any two of the arrows are in $\mathcal{W}$, then so is the third.

\item $\tuple{\mathcal{C} \cap \mathcal{W}, \mathcal{F}}$ and $\tuple{\mathcal{C}, \mathcal{W} \cap \mathcal{F}}$ are weak factorisation systems on $\mathcal{M}$.
\end{itemize}
Given a model structure $\tuple{\mathcal{C}, \mathcal{W}, \mathcal{F}}$ on a category,
\begin{itemize}
\item a \strong{weak equivalence} is a morphism in $\mathcal{W}$,

\item a \strong{cofibration} is a morphism in $\mathcal{C}$,

\item a \strong{fibration} is a morphism in $\mathcal{F}$,

\item a \strong{trivial cofibration} is a morphism in $\mathcal{C} \cap \mathcal{W}$, and

\item a \strong{trivial fibration} is a morphism in $\mathcal{W} \cap \mathcal{F}$.
\end{itemize}

A \strong{model category} is a locally small category that has limits and colimits for finite diagrams and is equipped with a model structure.
\end{dfn}

\begin{remark}
Let $\mathcal{M}$ be a category. Then, $\tuple{\mathcal{C}, \mathcal{W}, \mathcal{F}}$ is a model structure on $\mathcal{M}$ if and only if $\tuple{\op{\mathcal{F}}, \op{\mathcal{W}}, \op{\mathcal{C}}}$ is a model structure on $\op{\mathcal{M}}$.
\end{remark}

The retract argument (\autoref{prop:weak.factorisation.systems.and.retracts}) shows that model categories in the classical sense satisfy the axioms given above, and for the converse, we require the following fact:

\begin{lem}
The class of weak equivalences in a model category is closed under retracts.
\end{lem}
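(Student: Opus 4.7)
The plan is to reduce the claim to the retract closure of $\mathcal{C} \cap \mathcal{W}$, which is automatic from \autoref{prop:weak.factorisation.systems.and.retracts} because $\mathcal{C} \cap \mathcal{W}$ is the left class of the weak factorisation system $\tuple{\mathcal{C} \cap \mathcal{W}, \mathcal{F}}$.

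Suppose $f : A \to B$ is a retract of $g : A' \to B'$ in $\Func{\mathbf{2}}{\mathcal{M}}$ with $g \in \mathcal{W}$, with retract data $s_A, r_A, s_B, r_B$. First I would factor $f$ using the WFS $\tuple{\mathcal{C}, \mathcal{W} \cap \mathcal{F}}$ as $f = p u$ with $u \in \mathcal{C}$ and $p \in \mathcal{W} \cap \mathcal{F}$; since $p \in \mathcal{W}$, the 2-out-of-3 property shows $f \in \mathcal{W}$ if and only if $u \in \mathcal{W}$, so (as $u$ is already a cofibration) it suffices to prove $u \in \mathcal{C} \cap \mathcal{W}$. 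Symmetrically, factor $g$ using the other WFS $\tuple{\mathcal{C} \cap \mathcal{W}, \mathcal{F}}$ as $g = q v$ with $v \in \mathcal{C} \cap \mathcal{W}$ and $q \in \mathcal{F}$; 2-out-of-3 applied to $g, v \in \mathcal{W}$ then forces $q \in \mathcal{W} \cap \mathcal{F}$.

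The heart of the argument is to use the lifting properties of the two WFSs together with the retract data to exhibit $u$ as a retract of $v$ in $\Func{\mathbf{2}}{\mathcal{M}}$. I would solve two lifting problems assembled from the retract squares: the first has $u \in \mathcal{C}$ on the left, the trivial fibration $q$ on the right, top edge $v \circ s_A$, and bottom edge $s_B \circ p$, yielding $\alpha$ from the codomain of $u$ to that of $v$; the second has $v \in \mathcal{C} \cap \mathcal{W}$ on the left, the fibration $p$ on the right, top edge $u \circ r_A$, and bottom edge $r_B \circ q$, yielding $\beta$ in the reverse direction. Together with $s_A, r_A$ and $s_B, r_B$, the pairs $(s_A, \alpha)$ and $(r_A, \beta)$ assemble into a retract of $v$ onto $u$ in $\Func{\mathbf{2}}{\mathcal{M}}$, and then \autoref{prop:weak.factorisation.systems.and.retracts} applied to $\tuple{\mathcal{C} \cap \mathcal{W}, \mathcal{F}}$ gives $u \in \mathcal{C} \cap \mathcal{W}$, as desired.

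The main obstacle, as I anticipate, is the final assembly step: verifying that the constructed lifts really compose to the identity morphism of $u$ in $\Func{\mathbf{2}}{\mathcal{M}}$, rather than merely to an endomorphism of $u$ that commutes appropriately with $p$. Since lifts in a weak factorisation system need not be unique, this delicate point requires either a careful choice of $\alpha$ and $\beta$ exploiting the retract data, or an auxiliary lifting argument to correct the discrepancy; the rest of the proof is entirely routine bookkeeping with the axioms.
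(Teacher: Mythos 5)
The paper itself does not prove this lemma \dash it simply cites Lemma~14.2.5 of May and Ponto \dash so the relevant comparison is with that standard argument. Your setup is correct as far as it goes: the two factorisations, the 2-out-of-3 reductions, and the two lifting problems are all well posed (both squares commute, and the lifts $\alpha$ and $\beta$ exist). But the difficulty you flag at the end is not a routine loose end to be tidied up; it is the entire content of the lemma, and your argument does not close it. What you actually obtain is an endomorphism $e = \beta \circ \alpha$ of the middle object $C$ of the factorisation $f = p \circ u$ satisfying $e \circ u = u$ and $p \circ e = p$, and nothing forces $e = \id_C$. Nor can a \emph{careful choice} of lifts be expected to repair this: for $\beta \circ \alpha = \id_C$ to hold for some $\beta$, the lift $\alpha$ would at least have to be a split monomorphism, and nothing in its lifting problem guarantees that any solution is even a monomorphism. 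The structural obstruction is that exhibiting one arrow as a retract of another by solving two \emph{independent} lifting problems cannot work: lifts are not unique, and the two choices are not coupled to one another.

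The standard proof is organised precisely so that in each retraction it constructs, only \emph{one} half is produced by a lift, with the identity condition then forced by the lifting square itself. First one treats the case where $f$ is a fibration: factor $g = q \circ v$ with $v$ a trivial cofibration and $q$ a fibration (a trivial fibration by 2-out-of-3), and solve the single lifting problem $v \llpwrt f$ with top edge $r_A$ and bottom edge $r_B \circ q$; the lift $h$ satisfies $h \circ v = r_A$ as part of the lifting data, so $h \circ (v \circ s_A) = r_A \circ s_A = \id_A$ holds automatically, exhibiting $f$ as a retract of the trivial fibration $q$, whence $f \in \mathcal{W} \cap \mathcal{F}$ by \autoref{prop:weak.factorisation.systems.and.retracts}. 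The general case is then reduced to this one: factor $f = p \circ i$ with $i$ a trivial cofibration and $p$ a fibration, form the pushout $D$ of $i$ along $s_A$, and use the universal property of the pushout (not a lift) to produce the retraction $D \to C$; the induced map $D \to B'$ is a weak equivalence by 2-out-of-3 (it restricts to $g$ along the pushout of $i$, which is again a trivial cofibration), and the fibration $p$ is a retract of it, so the first case applies. To complete your write-up you would need to restructure it along these lines; in particular the pushout step cannot be replaced by lifting alone.
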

\begin{proof} \openproof
See Lemma 14.2.5 in \citep{May-Ponto:2012}.
\end{proof}

Moreover, a model structure is completely determined by the two weak factorisation systems:

\begin{lem}
\label{lem:factorising.weak.equivalences}
Let $\mathcal{M}$ be a category equipped with a model structure. The following are equivalent for a morphism $f$ in $\mathcal{M}$:
\begin{enumerate}[(i)]
\item $f$ is a weak equivalence in $\mathcal{M}$.

\item For any factorisation $f = p \circ j$ in $\mathcal{M}$ where $p$ is a fibration and $j$ is a trivial cofibration, $p$ must be a trivial fibration.

\item There exist a trivial cofibration $j$ and a trivial fibration $q$ such that $f = q \circ j$.
\end{enumerate}
\end{lem}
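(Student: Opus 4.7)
The plan is to chase the implications (i) $\Rightarrow$ (ii) $\Rightarrow$ (iii) $\Rightarrow$ (i), using only the 2-out-of-3 property of $\mathcal{W}$ and the existence of $(\mathcal{C} \cap \mathcal{W}, \mathcal{F})$-factorisations built into the definition of model structure. Nothing deeper is needed; this is essentially a warm-up exercise.

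For (i) $\Rightarrow$ (ii), I would take a factorisation $f = p \circ j$ with $j \in \mathcal{C} \cap \mathcal{W}$ and $p \in \mathcal{F}$. Since $j$ is a trivial cofibration it lies in $\mathcal{W}$, and by hypothesis so does $f$, so 2-out-of-3 forces $p \in \mathcal{W}$, whence $p \in \mathcal{W} \cap \mathcal{F}$ is a trivial fibration.

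For (ii) $\Rightarrow$ (iii), the $(\mathcal{C} \cap \mathcal{W}, \mathcal{F})$-weak factorisation system guaranteed by the definition of model structure lets us write $f = p \circ j$ with $j$ a trivial cofibration and $p$ a fibration; (ii) then upgrades $p$ to a trivial fibration, yielding the required factorisation with $q = p$.

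For (iii) $\Rightarrow$ (i), if $f = q \circ j$ with $j \in \mathcal{C} \cap \mathcal{W}$ and $q \in \mathcal{W} \cap \mathcal{F}$, then both $j$ and $q$ lie in $\mathcal{W}$, and a single application of 2-out-of-3 gives $f \in \mathcal{W}$. No step presents any real obstacle; the whole argument is a transparent use of the axioms stated just before the lemma, and there is no need to invoke the preceding lemma on closure of $\mathcal{W}$ under retracts.
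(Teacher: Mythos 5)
Your argument is correct and follows exactly the same route as the paper's own proof: (i) $\Rightarrow$ (ii) and (iii) $\Rightarrow$ (i) by the 2-out-of-3 property, and (ii) $\Rightarrow$ (iii) by applying the (trivial cofibration, fibration)-factorisation guaranteed by the model structure. Your closing observation is also accurate\dash the lemma on closure of $\mathcal{W}$ under retracts plays no role here.
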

\begin{proof}
(i) \implies (ii). Use the 2-out-of-3 property of weak equivalences.

\medskip\noindent
(ii) \implies (iii). Consider the (trivial cofibration, fibration)-factorisation of $f$.

\medskip\noindent
(iii) \implies (i). Use the 2-out-of-3 property of weak equivalences again.
\end{proof}

\begin{lem}
\label{lem:ternary.weak.factorisation.systems}
Let $\mathcal{M}$ be a category with a pair of weak factorisation systems $\tuple{\mathcal{C}', \mathcal{F}}$ and $\tuple{\mathcal{C}, \mathcal{F}'}$. Assume $\mathcal{W}$ is a subclass of $\mor \mathcal{C}$ satisfying the following condition:
\[
\mathcal{W} \subseteq \set{ q \circ j }{ j \in \mathcal{C}', q \in \mathcal{F}' }
\]
\begin{enumerate}[(i)]
\item $\mathcal{C} \cap \mathcal{W} \subseteq \mathcal{C}'$.

\item If $\mathcal{C}' \subseteq \mathcal{C} \cap \mathcal{W}$, then $\mathcal{F}' \subseteq \mathcal{F}$ and $\mathcal{C} \cap \mathcal{W} = \mathcal{C}'$.
\end{enumerate}
\needspace{2.5\baselineskip}
Dually:
\begin{enumerate}[(i\prime)]
\item $\mathcal{W} \cap \mathcal{F} \subseteq \mathcal{F}'$.

\item If $\mathcal{F}' \subseteq \mathcal{W} \cap \mathcal{F}$, then $\mathcal{C}' \subseteq \mathcal{C}$ and $\mathcal{W} \cap \mathcal{F} = \mathcal{F}'$.
\end{enumerate}
In particular, assuming $\mathcal{C}' \cup \mathcal{F}' \subseteq \mathcal{W}$, we have $\mathcal{C}' = \mathcal{C} \cap \mathcal{W}$ if and only if $\mathcal{F}' = \mathcal{W} \cap \mathcal{F}$.
\end{lem}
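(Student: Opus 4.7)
The plan is to deduce everything from the retract argument, exactly as in the proof of \autoref{prop:weak.factorisation.systems.and.retracts}. I will first prove (i), then bootstrap to (ii), then dualise for (i$'$) and (ii$'$), and finally combine the four items for the closing ``in particular'' clause.

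For (i), take $i : X \to Y$ in $\mathcal{C} \cap \mathcal{W}$. The hypothesis on $\mathcal{W}$ gives a factorisation $i = q \circ j$ with $j : X \to Z$ in $\mathcal{C}'$ and $q : Z \to Y$ in $\mathcal{F}'$. Since $i \in \mathcal{C}$ and $q \in \mathcal{F}'$, the square with top $j$ and bottom $\id_Y$ admits a diagonal filler $r : Y \to Z$; this filler exhibits $i$ as a retract of $j$ in $\Func{\mathbf{2}}{\mathcal{M}}$. Because $\mathcal{C}'$ is the left half of a weak factorisation system, it is closed under retracts (by \autoref{prop:weak.factorisation.systems.and.retracts}), so $i \in \mathcal{C}'$.

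For (ii), assume $\mathcal{C}' \subseteq \mathcal{C} \cap \mathcal{W}$. Combining this with (i) gives $\mathcal{C}' = \mathcal{C} \cap \mathcal{W}$. In particular $\mathcal{C}' \subseteq \mathcal{C}$, whence $\rlpclass{\mathcal{C}} \subseteq \rlpclass{\mathcal{C}'}$, \ie $\mathcal{F}' \subseteq \mathcal{F}$. Parts (i$'$) and (ii$'$) follow by the dual argument: a morphism $f \in \mathcal{W} \cap \mathcal{F}$ factors as $q \circ j$ with $j \in \mathcal{C}'$ and $q \in \mathcal{F}'$, and the lifting property of $(\mathcal{C}', \mathcal{F})$ exhibits $f$ as a retract of $q \in \mathcal{F}'$, and so forth.

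For the final equivalence, assume $\mathcal{C}' \cup \mathcal{F}' \subseteq \mathcal{W}$. If $\mathcal{C}' = \mathcal{C} \cap \mathcal{W}$, then in particular the hypothesis of (ii) holds, yielding $\mathcal{F}' \subseteq \mathcal{F}$; combined with $\mathcal{F}' \subseteq \mathcal{W}$ and (i$'$), this gives $\mathcal{F}' = \mathcal{W} \cap \mathcal{F}$. The reverse implication is symmetric, using (ii$'$) followed by (i). None of the steps involve any substantial obstacle; the only point that requires care is to keep track of which weak factorisation system's lifting property is being invoked (it is always the ``cross'' system $(\mathcal{C}, \mathcal{F}')$ for (i) and $(\mathcal{C}', \mathcal{F})$ for (i$'$)), and to recall that the left and right classes of a weak factorisation system are each closed under retracts.
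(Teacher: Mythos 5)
Your proposal is correct and follows essentially the same route as the paper: the retract argument via the lifting property of $(\mathcal{C}, \mathcal{F}')$ for (i), the observation that $\mathcal{C}' \subseteq \mathcal{C}$ forces $\rlpclass{\mathcal{C}} \subseteq \rlpclass{\mathcal{C}'}$ for (ii), duality for the primed statements, and the chaining of (ii) with (i$'$) (and symmetrically) for the closing equivalence. No gaps.
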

\begin{proof}
(i). Suppose $i : X \to Z$ is in $\mathcal{C} \cap \mathcal{W}$; then there must be $j : X \to Y$ in $\mathcal{C}'$ and $q : Y \to Z$ in $\mathcal{F}'$ such that $i = q \circ j$, and so we have the commutative diagram shown below:
\[
\begin{tikzcd}
X \dar[swap]{i} \rar{j} &
Y \dar{q} \\
Z \rar[swap]{\id} &
Z
\end{tikzcd}
\]
Since $i \llpwrt q$, $i$ must be a retract of $j$; hence, by \autoref{prop:weak.factorisation.systems.and.retracts}, $i$ is in $\mathcal{C}'$, and therefore $\mathcal{C} \cap \mathcal{W} \subseteq \mathcal{C}'$.

\medskip\noindent
(ii). If we know $\mathcal{C}' \subseteq \mathcal{C}$, then $\mathcal{F}' \subseteq \mathcal{F}$; and $\mathcal{C}' \subseteq \mathcal{C} \cap \mathcal{W}$, so by (i) it follows that $\mathcal{C}' = \mathcal{C} \cap \mathcal{W}$.
\end{proof}

\needspace{2.5\baselineskip}
The next definition is due to \citet{Smith:1998}:

\begin{dfn}
A \strong{combinatorial model category} is a locally presentable category $\mathcal{M}$ equipped with a cofibrantly generated model structure, \ie there exist subsets $\mathcal{I}$ and $\mathcal{I}'$ of $\mor \mathcal{M}$ such that $\rlpclass{\mathcal{I}}$ is the class of trivial fibrations in $\mathcal{M}$ and $\rlpclass{\mathcal{I}'}$ is the class of fibrations.
\end{dfn}

\begin{remark}
One can use a small object argument (such as \autoref{thm:Quillen.small.object.argument}) to deduce that there are functorial (trivial cofibration, fibration)- and (cofibration, trivial fibration)-\allowhyphens factorisations in a combinatorial model category.
\end{remark}

Finally, let us recall the definition of `simplicial model category':

\begin{dfn}
A \strong{simplicial model structure} on a simplicially enriched category $\ul{\mathcal{M}}$ is a model structure on the underlying ordinary category $\mathcal{M}$ that satisfies the following axiom:
\begin{itemize}[labelwidth=3.0em, leftmargin=!]
\item[\textbf{\LiningNumbers SM7.}] If $i : Z \to W$ is a cofibration in $\mathcal{M}$ and $p : X \to Y$ is a fibration in $\mathcal{M}$, and the square in the diagram below is a pullback square in $\cat{\SSet}$,
\[
\begin{tikzcd}
\ulHom[\mathcal{M}]{W}{X} \arrow[swap, bend right=20]{ddr}{\ulHom[\mathcal{M}]{i}{X}} \arrow[bend left=10]{drr}{\ulHom[\mathcal{M}]{W}{p}} \drar[dashed]{i^* \pbprod p_*} \\
& 
\ulHom[\mathcal{M}]{Z}{X} \times_{\ulHom[\mathcal{M}]{Z}{Y}} \ulHom[\mathcal{M}]{W}{Y} \dar \rar &
\ulHom[\mathcal{M}]{W}{Y} \dar{\ulHom[\mathcal{M}]{i}{Y}} \\
&
\ulHom[\mathcal{M}]{Z}{X} \rar[swap]{\ulHom[\mathcal{M}]{Z}{p}} &
\ulHom[\mathcal{M}]{Z}{Y}
\end{tikzcd}
\]
then the unique morphism $i^* \pbprod p_*$ making the diagram commute is a Kan fibration; moreover, if either $i : Z \to W$ or $p : X \to Y$ is a weak equivalence, then $i^* \pbprod p_*$ is a trivial Kan fibration.
\end{itemize}

A \strong{simplicial model category} is a locally small simplicially enriched category $\ul{\mathcal{M}}$ that has limits and colimits for finite diagrams, tensor and cotensor products with finite simplicial sets, and is equipped with a simplicial model structure.
\end{dfn}

\begin{prop}
\label{prop:scats:SM7ab}
Let $\ul{\mathcal{M}}$ be a locally small simplicially enriched category with limits and colimits for finite diagrams and tensor and cotensor products with finite simplicial sets. Given a model structure on $\mathcal{M}$, the following are equivalent:
\begin{enumerate}[(i)]
\item Axiom {\LiningNumbers SM7} is satisfied.

\item For all fibrations (\resp trivial fibrations) $p : X \to Y$ in $\mathcal{M}$, if $i : Z \to W$ is a boundary inclusion $\partial \Delta^n \embedinto \Delta^n$ and the square in the diagram below is a pullback square in $\mathcal{M}$,
\[
\begin{tikzcd}
W \cotens X \arrow[swap, bend right=20]{ddr}{i \cotens \id_X} \arrow[bend left=10]{drr}{\id_W \cotens p} \drar[dashed]{i \pbprod p} \\
& 
\parens{Z \cotens X} \times_{Z \cotens Y} \parens{W \cotens Y} \dar \rar &
W \cotens Y \dar{i \cotens \id_Y} \\
&
Z \cotens X \rar[swap]{\id_Z \cotens p} &
Z \cotens Y
\end{tikzcd}
\]
where $Z \cotens X$ denotes the cotensor product of $Z$ and $X$, then the unique morphism $i \pbprod p$ making the diagram commute is a fibration (\resp trivial fibration); and for all fibrations $p : X \to Y$ in $\mathcal{M}$, if $i : Z \to W$ is a horn inclusion $\Lambda^n_k \embedinto \Delta^n$, then the morphism $i \pbprod p$ defined as above is a trivial fibration.
\end{enumerate}
\end{prop}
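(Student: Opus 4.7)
The plan is to reduce both (i) and (ii) to the same class of lifting problems in $\mathcal{M}$, using the two-variable adjunction between tensor, cotensor, and enriched hom, together with the well-known characterisation of (trivial) Kan fibrations by the right lifting property against horn (boundary) inclusions.

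First, I would set up the Joyal--Tierney style calculus. For any map $u : A \to B$ in $\cat{\SSet}$, any $i : Z \to W$ in $\mathcal{M}$, and any $p : X \to Y$ in $\mathcal{M}$, the two-variable adjunction
\[
\Hom[\mathcal{M}]{W \otimes A}{X} \cong \Hom[\cat{\SSet}]{A}{\ulHom[\mathcal{M}]{W}{X}} \cong \Hom[\mathcal{M}]{W}{A \cotens X}
\]
extends, by a routine diagram chase using the pullback and pushout that define the corner maps, to natural bijections between the sets of solutions of the three lifting problems
\[
u \llpwrt \parens{i^* \pbprod p_*} \text{ in } \cat{\SSet}, \qquad i \llpwrt \parens{u \pbprod p} \text{ in } \mathcal{M}, \qquad \parens{u \poprod i} \llpwrt p \text{ in } \mathcal{M},
\]
where $u \poprod i$ denotes the pushout-product in $\mathcal{M}$. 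Only the first two equivalences are needed here.

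Next, I would apply the fact that a map in $\cat{\SSet}$ is a Kan fibration if and only if it has the right lifting property with respect to every horn inclusion $\Lambda^n_k \embedinto \Delta^n$, and a trivial Kan fibration if and only if it has the right lifting property with respect to every boundary inclusion $\partial \Delta^n \embedinto \Delta^n$. Instantiating with $f = i^* \pbprod p_*$, axiom {\LiningNumbers SM7} unfolds into three families of lifting conditions in $\cat{\SSet}$, indexed by the three admissible triples (horn $u$, cofibration $i$, fibration $p$), (boundary $u$, trivial cofibration $i$, fibration $p$), and (boundary $u$, cofibration $i$, trivial fibration $p$).

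Transporting these conditions through the adjunction turns each of them into the assertion that $i \llpwrt (u \pbprod p)$ in $\mathcal{M}$. Since the weak factorisation systems underlying the model structure characterise fibrations as $\rlpclass{(\text{trivial cofibrations})}$ and trivial fibrations as $\rlpclass{(\text{cofibrations})}$, quantifying over $i$ reassembles the three conditions as: $u \pbprod p$ is a trivial fibration when $u$ is a horn inclusion and $p$ is a fibration; $u \pbprod p$ is a fibration when $u$ is a boundary inclusion and $p$ is a fibration; and $u \pbprod p$ is a trivial fibration when $u$ is a boundary inclusion and $p$ is a trivial fibration. This is exactly (ii).

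The only real work is bookkeeping: one must check that the three cases of {\LiningNumbers SM7} correspond, under the adjunction and the characterisation of (trivial) Kan fibrations, to precisely the three cases listed in (ii), and that the translation preserves the distinction between fibration and trivial fibration on either side. Apart from this, everything is formal from the two-variable adjunction, so no deeper machinery is required.
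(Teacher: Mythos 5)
Your argument is correct and is exactly the ``exercise in manipulating partial adjunctions and lifting properties'' that the paper's proof alludes to (the paper itself gives no details, deferring to Proposition 9.3.7 of Hirschhorn): the adjunction $u \llpwrt \parens{i^* \pbprod p_*} \iff i \llpwrt \parens{u \pbprod p}$ plus the horn/boundary characterisation of (trivial) Kan fibrations and the lifting characterisation of (trivial) fibrations in $\mathcal{M}$ matches the three cases of {\LiningNumbers SM7} to the three cases of (ii) precisely as you describe. No further comment is needed.
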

\begin{proof} \openproof
This is an exercise in manipulating partial adjunctions and lifting properties; but see also Proposition 9.3.7 in \citep{Hirschhorn:2003}.
\end{proof}

\ifdraftdoc

\else
  \printbibliography
\fi

\end{document}